\documentclass[a4paper,10pt]{article}
\usepackage[margin=2.5cm]{geometry}
\usepackage[utf8]{inputenc}
\usepackage{amsmath,amsthm,amssymb}

\usepackage{hyperref}
\hypersetup{
  colorlinks=true,
  linkcolor=black,
  citecolor=blue,
  urlcolor=blue,
  pdftitle={},
  pdfauthor={},
}
\newtheorem{thm}{Theorem}[section]
\newtheorem{lem}[thm]{Lemma}
\newtheorem{rem}[thm]{Remark}
\newtheorem{cor}[thm]{Corollary}
\newtheorem{definition}[thm]{Definition}
\swapnumbers
\usepackage{xcolor}

\newcommand{\eps}{\varepsilon}
\newcommand{\R}{\mathbb{R}}
\newcommand{\N}{\mathbb{N}}

\newcommand{\io}{\int_\Omega}

\usepackage{graphicx}

\allowdisplaybreaks
\title{Emergence of large densities in a chemotaxis system with signal-dependent motility}
\author{Max Hegenbart\\
\small{Leibniz Universität Hannover}\\
\small{Institut für Angewandte Mathematik}\\
\small{hegenbart@ifam.uni-hannover.de}}

\begin{document}

\maketitle

\begin{abstract}
In this paper, we focus on the emergence of large densities of a cell population in finite time in
the following parabolic-elliptic Keller-Segel system
with signal-dependent motility and a source term of logistic type
\begin{equation}\label{Hauptsystem Abstract}
\begin{cases}
u_{\eps t} = \Delta(e^{-v_\eps}u_\eps) + \eps(\kappa u_\eps - \mu u_\eps^2)
,\\
0 =\Delta v_{\eps}-v_\eps+ u_{\eps}.
\end{cases}
\end{equation}
in a smooth bounded domain $\Omega\subseteq\R^2$ with homogenuous Neumann boundary conditions
and parameters $\kappa,\mu>0$ and $\eps\in(0,1)$.
We will show that the solution $u_\eps$ can exceed any certain value $M>0$ before a time $T(M)>0$ under the
condition that $\eps>0$ is sufficiently small.
To prove the result, we show convergence of classical
solutions of \eqref{Hauptsystem Abstract} to classical solutions of the
corresponding limit system as $\eps\searrow 0$
\begin{equation}\label{Grenzsystem Abstract}
 \begin{cases}
  u_t=\Delta(e^{-v}u),\\
  -\Delta v+v=u.
 \end{cases}
\end{equation}
 The main challenge is to find suitable bounds for $u_\eps$ and $v_\eps$, that are $\eps$-independent, in Lebesgue and Hölder spaces, whereas for fixed $\eps>0$ global existence and boundedness is known.
\end{abstract}
\textbf{Keywords:} chemotaxis; logistic source term; large densities; uniform convergence\\
\textbf{Math Subject Classification (MSC 2020):} 35K55; 
35B45;   	
35B36;   	
35Q92;   	
35B40; 
92C17   	

\section{Introduction}
In the context of chemotaxis, the emergence of patterns
(when a cell population is highly concentrated in some regions)
plays an important role in biology, health care etc.
In the mathematical analysis, the usual way to search for such patterns revolves around a study of the
solution in the limit of its existence time, i.e. the long-term or, possibly, blow-up behaviour
(with results often dealing with the question of
global existence vs. finite-time blow-up or boundedness vs. infinite-time blow-up of solutions).
In this work, however, we are interested in the emergence of arbitrarily large
population densities before some possibly arising chemotactic collapse and within finite time,
that is, on intermidiate time scales.
To this end, we study the model
\begin{equation}\label{Hauptsystem}
\begin{cases}
u_{\eps t} = \Delta(e^{-v_\eps}u_\eps) + \eps(\kappa u_\eps - \mu u_\eps^2)
& \text{in } \Omega \times (0,T_{\max,\eps}), \\[0.5em]

0 = \Delta v_{\eps} - v_{\eps}+u_{\eps}
& \text{in } \Omega \times (0,T_{\max,\eps}), \\[0.5em]

\partial_\nu u_\eps = \partial_\nu v_\eps = 0
& \text{on } \partial\Omega \times (0,T_{\max,\eps}), \\[0.5em]

u_\eps(x,0) = u_0(x)
& \text{in } \Omega,
\end{cases}
\end{equation}
for $\eps\in(0,1)$ and a smooth bounded domain $\Omega\subseteq\R^2$,
which describes signal-dependent motility of
cells. The functions $u=u(x,t)$ and $v=v(x,t)$ denote the population density or the signal concentration,
respectively.
This kind of motility can, for instance, be observed in colonies of \textit{Escherichia coli},
which are bacteria that humans have in their guts and that play an important role
during digestion \cite{Berg2004}. Here, the motility depends on the density of the
substance \textit{acyl-homoserine lactone}  which acts as chemoattractant signal \cite{Liu2011-ae}.
In regions of low density, bacteria are motile, where as in regions of large signal density, bacteria are
essentially immotile (cf. \cite{Liu2011-ae,PhysRevLett.108.198102}).
The logistic source term $\eps(\kappa u_\eps-\mu u_\eps^2)$
models the birth and death of cells.
If we set $\eps=0$, we denote by $(u,v)$ the solution of
\begin{equation}\label{eps=0 System}
\begin{cases}
u_t = \Delta(e^{-v}u)
& \text{in } \Omega \times (0,T_{\max}), \\[0.5em]

-\Delta v + v = u
& \text{in } \Omega \times (0,T_{\max}), \\[0.5em]

\partial_\nu u = \partial_\nu v = 0
& \text{on } \partial\Omega \times (0,T_{\max}), \\[0.5em]

u(x,0) = u_0(x)
& \text{in } \Omega.
\end{cases}
\end{equation}
In comparison, the classical Keller-Segel
system \cite{KELLER1970399,MR2013508,BELBELTAOWIN2015,MR4071449} is known to be
\begin{equation}\label{klassisches Keller-Segel}
 \begin{cases}
  u_t=\Delta u-\nabla\cdot(u\nabla v),\\
  v_t=\Delta v-v+u,\\
  \partial_\nu u=\partial_\nu v=0,\\
  u(x,0)=u_0(x),v(x,0)=v_0(x).
 \end{cases}
\end{equation}
In order to see the relation between \eqref{Hauptsystem} and \eqref{eps=0 System}
to typical chemotaxis equations as in \eqref{klassisches Keller-Segel}, we
expand the Laplacian then in the first equation, obtaining
\begin{align*}
 u_t=\Delta(e^{-v}u)=\nabla\cdot(e^{-v}\nabla u-e^{-v}u\nabla v).
\end{align*}
In comparison, for the classical Keller-Segel system \eqref{klassisches Keller-Segel} we have
\begin{align*}
 u_t=\nabla\cdot(\nabla u-u\nabla v).
\end{align*}
Hence, the structure of the parabolic equation in \eqref{Hauptsystem} and \eqref{eps=0 System}
is the same as in \eqref{klassisches Keller-Segel}, but the so-called \textit{motility function} $e^{-v}$
controls the strength of diffusion and chemotaxis (cf. \cite{Liu2011-ae,PhysRevLett.108.198102}).
Unboundedness of solutions in finite time is known to occur in
Keller-Segel systems for some initial data (see \cite{MR1627338,Winkler_2013} for fully-parabolic,
and \cite{Jager_Luckhaus_1992,MR1931303} for parabolic-elliptic systems),
whereas the addition of logistic terms guarantees global
solutions under certain conditions (see \cite{Osaki} for global existence or
\cite{TelloWinkler_2007,Winkler2010} for bounded solutions).
These statements describe the behaviour of solutions close to their maximal existence time.
Here we are interested in whether the cell density may become arbitrarily large in some finite time if
we allow a weak effect of a logistic source term compared to diffusion and chemotaxis, as is known to be
the case for \eqref{klassisches Keller-Segel} \cite{MR3651259}.
Therefore, we consider \eqref{Hauptsystem} with small values for $\eps>0$.
Due to \cite{FUJIE20205338} we know that solutions are bounded (for more details see the discussion below).
Hence, any kind of blow-up cannot occur. Nevertheless, they can exceed any value for sufficiently small
values of $\eps$.
Our strategy is to use the $(\eps=0)$-system \eqref{eps=0 System},
for which we will show that it is not just a ``formal'' limit system,
but solutions of \eqref{eps=0 System} can actually be obtained as limits of
solutions to \eqref{Hauptsystem} in a suitable sense (compatible with transferring pointwise largeness
and fulfillment of the equation). The strategy to use a limit system, where blow-up may occur,
and prove uniform convergence to solutions of it
can also be found in \cite{WINK2014,Lankeit2015} for logistic growth or in \cite{Xu2020,Kohatsu_Lankeit2024}
for two-species-models
(there the diffusion is weak compared to chemotaxis and growth terms).
Unboundedness in $L^p(\Omega)$ for all $p>\frac{n}{2}$ was shown for \eqref{eps=0 System} based on blow-up
in the elliptic-parabolic version for the motility function $\gamma(v)=v^{-k}$ with sufficiently large $k$
\cite{wang2024singular}.\\
Let us briefly summarize some known results for chemotaxis systems with signal-dependent motility like
\eqref{Hauptsystem} and \eqref{eps=0 System}. For our model, we assume the following:
\begin{align}\label{Generalvoraussetzungen}
 \begin{split}
  &\Omega\subseteq\R^2\text{ be a smooth bounded domain,}\\
  &\kappa,\mu>0,\\
  &u_0\in W^{1,q}(\Omega)\text{ for }q>2,\quad u_0\geq 0\:\text{in }\overline{\Omega},\quad u_0\not\equiv 0,\\
  &v_0:=(I-\Delta)^{-1}u_0.
 \end{split}
\end{align}
In general there are more options for choosing motility functions in \eqref{Hauptsystem}
than $\gamma(v)=e^{-v}$.
A typical assumption is that such a function $\gamma$ satisfies
\begin{align}\label{Wachstumsbedingung an gamma}
 \gamma\in C^3([0,\infty)),\quad\gamma>0\text{ in }[0,\infty),\quad\gamma'<0\text{ in }[0,\infty),
\end{align}
as in \cite{jin2018boundedness,FUJIE20205338,fujie2022global}.
To summarize the known results,
the consideration of signal-dependent motility by including a motility function
satisfying \eqref{Wachstumsbedingung an gamma} prevents finite-time blow-up.
Whether solutions are bounded or blow up in infinite time depends on the certain choice
of the motility function (for infinite-time blow-up see \cite{FUJIE20205338} for the parabolic-elliptic, and
\cite{MR4342933,MR4143400,fujie2022global} for the fully parabolic setting; boundedness is proven in
\cite{ahn2019global,FUJIE20205338,MR4452315}). The addition of a logistic term excludes
unboundedness in \eqref{Hauptsystem} if the motility function satisfies \eqref{Wachstumsbedingung an gamma}
(see \cite{FUJIE20205338,MR4478289} for parabolic-elliptic, and
\cite{jin2018boundedness,MR3899907,lv2021n,tao2022global} for fully parabolic systems).
Moreover, for large values of $\mu$ (cf. \cite{FUJIE20205338,MR3944389}) or if $\gamma$ satisfies a certain
ordinary differential equation (ODI) (cf. \cite{desvillettes2023weak,MR4874468}),
convergence of solutions to constant steady-states in $L^\infty$ or $L^2$, respectively, is known.
The strategy of proving blow-up in system \eqref{eps=0 System} (which has been used in \cite{FUJIE20205338})
is the same as for the classical Keller-Segel system,
since the systems \eqref{klassisches Keller-Segel} and \eqref{eps=0 System} share the same Lyapunov functional
\begin{align*}
 \mathcal{F}(u,v):=\io u\ln(u)-\io uv+\frac{1}{2}\io v^2+\frac{1}{2}\io\vert\nabla v\vert^2.
\end{align*}
As mentioned above, in order to prove the emergence of large densities in \eqref{Hauptsystem} our aim is
to obtain, possibly along a subsequence, uniform convergence of solutions of \eqref{Hauptsystem}
to solutions of \eqref{eps=0 System}.
The convergence statement then is:
\begin{thm}\label{Theorem Konvergenzen}
Let $\Omega\subseteq\R^2$ be a smooth bounded domain, $\kappa,\mu>0$, and $(\eps_{j})_{j\in\N}\subseteq(0,1)$
be a nullsequence. Then there is a subsequence $\eps_{j_k}\searrow 0$ such that
the corresponding unique classical solutions $((u_{\eps_{j_k}},v_{\eps_{j_k}}))_{k\in\N}$
of \eqref{Hauptsystem} satisfy
 \begin{align*}
  &u_{\eps_{j_k}}\to u\quad\text{in }C_{loc}^0(\overline{\Omega}\times[0,\infty)),\\
  &v_{\eps_{j_k}}\to v\quad\text{in }C_{loc}^{2,0}(\overline{\Omega}\times[0,\infty)),
 \end{align*}
 where the pair
 \begin{align*}
  (u,v)\in\left( C^0(\overline{\Omega}\times[0,\infty))
 \cap C^{2,1}(\overline{\Omega}\times(0,\infty)) \right)^2,
 \end{align*}
 solves \eqref{eps=0 System} in the classical sense.
\end{thm}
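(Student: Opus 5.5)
The plan is to derive, for every fixed $T>0$, bounds on $u_\eps$ and $v_\eps$ on $\overline{\Omega}\times[0,T]$ that are independent of $\eps\in(0,1)$, first pointwise, then in $L^p$, $L^\infty$ and Hölder norms. From these I extract convergent subsequences by Arzelà–Ascoli and a diagonal argument over $T\in\N$, and finally identify the limit as a classical solution of \eqref{eps=0 System}. Global classical solvability of \eqref{Hauptsystem} for each fixed $\eps$ is taken from \cite{FUJIE20205338}.

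\emph{Basic bounds.} Integrating the first equation gives
\[
\frac{d}{dt}\io u_\eps=\eps\Bigl(\kappa\io u_\eps-\mu\io u_\eps^2\Bigr)\le\kappa\io u_\eps ,
\]
so $\io u_\eps\le e^{\kappa T}\io u_0$ on $[0,T]$. Next I use the key identity of Fujie–Jiang type. Applying $(I-\Delta)^{-1}$ to the first equation, and using $v_\eps=(I-\Delta)^{-1}u_\eps$ together with $-\Delta(e^{-v_\eps}u_\eps)=(I-\Delta)(e^{-v_\eps}u_\eps)-e^{-v_\eps}u_\eps$, I obtain
\[
v_{\eps t}+e^{-v_\eps}u_\eps=(I-\Delta)^{-1}\bigl[e^{-v_\eps}u_\eps\bigr]+\eps(I-\Delta)^{-1}\bigl[\kappa u_\eps-\mu u_\eps^2\bigr].
\]
Since $v_\eps\ge0$, we have $e^{-v_\eps}u_\eps\le u_\eps$. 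By the comparison principle for $(I-\Delta)^{-1}$, the right-hand side is then at most $(1+\eps\kappa)v_\eps\le(1+\kappa)v_\eps$. The term $e^{-v_\eps}u_\eps$ on the left is nonnegative, so $v_{\eps t}\le(1+\kappa)v_\eps$ pointwise. Hence
\[
0\le v_\eps\le \|v_0\|_{L^\infty(\Omega)}e^{(1+\kappa)T}=:K(T),
\]
and $v_0$ is bounded since $u_0\in W^{1,q}$. It follows that $e^{-v_\eps}\ge e^{-K(T)}>0$, so the first equation is uniformly parabolic on $[0,T]$ with $\eps$-independent ellipticity constants.

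\emph{$L^p$ and $L^\infty$ bounds (main obstacle).} I test the first equation, written as $u_{\eps t}=\nabla\cdot(e^{-v_\eps}\nabla u_\eps-e^{-v_\eps}u_\eps\nabla v_\eps)+\eps(\kappa u_\eps-\mu u_\eps^2)$, against $u_\eps^{p-1}$. The diffusion yields $-(p-1)e^{-K(T)}\io u_\eps^{p-2}|\nabla u_\eps|^2$. In the cross term I integrate by parts once more and use $-\Delta v_\eps=u_\eps-v_\eps$. This produces the good-sign term $-\frac{p-1}{p}\io e^{-v_\eps}u_\eps^{p+1}$ and a term involving $|\nabla v_\eps|^2u_\eps^p$. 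That term is the delicate one. I would control it using elliptic $W^{2,r}$ estimates, via $\|\nabla v_\eps\|_{L^{2p+2}}\le C\|u_\eps\|_{L^{(2p+2)/2}}$ in 2D, together with Gagliardo–Nirenberg and the $L^1$ bound. The logistic term only helps, contributing at most $\kappa\io u_\eps^p$. If the direct route fails, I would instead use the Fujie–Jiang energy for $\io u_\eps^2$ based on the identity above and the upper bound of $v_\eps$, and bootstrap from there. This gives $\sup_{t\le T}\|u_\eps\|_{L^p}\le C(p,T)$. A Moser iteration, or a semigroup argument for the uniformly parabolic operator, then gives $\|u_\eps\|_{L^\infty(\Omega\times(0,T))}\le C(T)$. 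The initial data lie in $W^{1,q}$ with $q>2$, so these bounds hold up to $t=0$.

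\emph{Regularity and passage to the limit.} With $u_\eps$ bounded, elliptic regularity gives uniform $W^{2,r}$ bounds on $v_\eps$, hence uniform $C^{1+\alpha}$ bounds in space. The first equation is in divergence form with bounded measurable, uniformly elliptic coefficients and bounded lower-order terms. The Hölder estimates of Porzio–Vespri therefore give $\|u_\eps\|_{C^{\alpha,\alpha/2}(\overline{\Omega}\times[0,T])}\le C(T)$, including up to $t=0$ because $u_0$ is Hölder continuous. Elliptic Schauder theory then yields $C^{2+\alpha}$-in-space bounds for $v_\eps$, with Hölder continuity in time inherited from $u_\eps$. Arzelà–Ascoli and a diagonal argument over $T=1,2,\dots$ produce the claimed convergence in $C^0_{loc}$ and $C^{2,0}_{loc}$. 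In the weak formulation the term $\eps(\kappa u_\eps-\mu u_\eps^2)$ vanishes, so $(u,v)$ is a weak solution of \eqref{eps=0 System} with $u(\cdot,0)=u_0$. Expanding $\Delta(e^{-v}u)$ gives a linear parabolic equation for $u$ with Hölder coefficients. Parabolic Schauder theory then gives $u\in C^{2,1}(\overline{\Omega}\times(0,\infty))$, and consequently $v\in C^{2,1}$ as well, since $v_t=(I-\Delta)^{-1}u_t$.
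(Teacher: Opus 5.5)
The gap is in the step you yourself call the main obstacle, the $\eps$-uniform $L^p$ bound. The direct route you propose for it fails, for two reasons.

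\emph{The sign is wrong.} Write the cross term as $(p-1)\io e^{-v_\eps}u_\eps^{p-1}\nabla u_\eps\cdot\nabla v_\eps=-\frac{p-1}{p}\io u_\eps^p\,\nabla\cdot(e^{-v_\eps}\nabla v_\eps)$ and insert $\Delta v_\eps=v_\eps-u_\eps$. This yields
\[
\frac{p-1}{p}\io e^{-v_\eps}u_\eps^{p+1}-\frac{p-1}{p}\io e^{-v_\eps}u_\eps^{p}v_\eps+\frac{p-1}{p}\io e^{-v_\eps}u_\eps^{p}|\nabla v_\eps|^2 .
\]
So the $u_\eps^{p+1}$-term is the chemotactic aggregation term with the destabilizing sign, and there is no dissipative $-\io e^{-v_\eps}u_\eps^{p+1}$.

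\emph{The estimate is critical.} The tools you list (mass bound, $W^{2,r}$ estimates, Gagliardo--Nirenberg) only give a bound $C(m)\io u_\eps^{p-2}|\nabla u_\eps|^2+C$ for the bad terms, with $C(m)$ small only when the mass $m$ is small. For instance, in two dimensions $\io u_\eps^{p+1}\le C\|u_\eps\|_{L^1(\Omega)}\|\nabla u_\eps^{p/2}\|_{L^2(\Omega)}^2+C$. This can be absorbed by the diffusion $e^{-K(T)}(p-1)\io u_\eps^{p-2}|\nabla u_\eps|^2$ only for small mass, which is the two-dimensional critical-mass obstruction. The logistic term cannot help uniformly, because its absorbing part $-\eps\mu\io u_\eps^{p+1}$ vanishes as $\eps\searrow0$. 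Large mass ($\Lambda>8\pi$) is exactly the regime the paper needs.

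What is missing is an $\eps$-independent quantity stronger than the mass. Your fallback points to the right first step. Testing the first equation with $v_\eps$ gives
\[
\tfrac12\tfrac{d}{dt}\|v_\eps\|_{W^{1,2}(\Omega)}^2+\io e^{-v_\eps}u_\eps^2=\io e^{-v_\eps}u_\eps v_\eps+\eps\io(\kappa u_\eps-\mu u_\eps^2)v_\eps\le C(T),
\]
hence $\int_0^T\io u_\eps^2\le C(T)$ via $e^{-v_\eps}\ge e^{-K(T)}$. But ``bootstrap from there'' is precisely the argument that has to be supplied.

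The paper uses this bound to control the cross term in the entropy identity, which gives $\sup_{t<T}\io u_\eps\ln u_\eps\le C(T)$. It then applies the Fujie--Senba inequality
\[
\io u_\eps^{p+1}\le\frac{C(p+1)^2}{\ln s}\io\bigl(u_\eps\ln u_\eps+\tfrac1e\bigr)\cdot\io u_\eps^{p-2}|\nabla u_\eps|^2+C(s),
\]
where the factor $1/\ln s$ supplies the missing smallness. This gives $L^p$ bounds for $p\in(1,2)$, then $L^3$, then Moser.

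A shorter way to close from the same space-time bound is Gronwall. Fix $r>1$. By H\"older, the 2D Gagliardo--Nirenberg inequality and
\[
\|\nabla v_\eps\|_{L^{2r}(\Omega)}\le C\|u_\eps\|_{L^{2r/(r+1)}(\Omega)}\le C\|u_\eps\|_{L^1(\Omega)}^{1/r}\|u_\eps\|_{L^2(\Omega)}^{(r-1)/r},
\]
one gets
\[
\io u_\eps^p|\nabla v_\eps|^2\le\eta\|\nabla u_\eps^{p/2}\|_{L^2(\Omega)}^2+C_\eta\bigl(1+\|u_\eps\|_{L^2(\Omega)}^2\bigr)\io u_\eps^p .
\]
The coefficient $1+\|u_\eps\|_{L^2(\Omega)}^2$ is integrable on $(0,T)$ uniformly in $\eps$, so Gronwall closes for every $p$.

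Once that step is filled in, the rest of your plan is sound: Porzio--Vespri in place of DiBenedetto, the diagonal argument in $T$, and Schauder theory for the limit equation in place of uniform $C^{2+\theta,1+\theta/2}_{loc}$ bounds on $u_\eps$. For the last point you should add a uniqueness argument for the linear problem, to identify the weak limit with its classical solution.
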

The strategy will be to derive a local-in-time $L^\infty$-bound for $u_\eps$.
The computations will be similar to those
from \cite{FUJIE20205338}, but here we have to take care that our bounds are independent of $\eps$. Based on
parabolic regularity theory \cite{DiBenedetto1986}, we will show boundedness of $u_\eps$ in a Hölder space,
from which we can conclude uniform convergence. The convergence result for $v_\eps$ is based on elliptic
regularity theory, where the most important estimates for us will be stated in
Lemma \ref{elliptische Regularitätstheorie}. The uniform convergence will also be a useful tool to show
that the limits are solutions of \eqref{eps=0 System}. Due to Theorem \ref{Theorem Konvergenzen}, we can show
that solutions become large, although they are bounded, if we use suitable inital data.
\begin{thm}\label{Theorem große Dichten}
 Let $\Omega=B_R(0)\subseteq\R^2$ for $R\in(0,\infty)$, $\kappa\geq0$ and $\mu>0$.
 For each $\Lambda\in(8\pi,\infty)\setminus4\pi\N$, there is $u_0\in W^{1,q}(\Omega)$
 with $q>2$ such that $u_0$ is radially symmetric and satisfies $\Vert u_0\Vert_{L^1(\Omega)}=\Lambda$
and for each $M>0$ there are $T(M)>0$ and $\tilde{\eps}(M)\in(0,1)$ with the property that for any $\eps\in(0,\tilde{\eps}(M))$ one can find
$t_\eps\in(0,T(M))$ and $x_\eps\in\Omega$ such that the solution $(u_\eps,v_\eps)$ of \eqref{Hauptsystem} satisfies $u_\eps(x_\eps,t_\eps)>M$.
\end{thm}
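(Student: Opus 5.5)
The plan is to combine Theorem \ref{Theorem Konvergenzen} with a known infinite-time blow-up result for the limit system \eqref{eps=0 System}, and to argue by contradiction. For the blow-up ingredient I intend to use \cite{FUJIE20205338}. For $\Omega=B_R(0)$ and mass $\Lambda\in(8\pi,\infty)\setminus4\pi\N$, the Lyapunov functional $\mathcal{F}$ from the introduction should provide radially symmetric initial data $u_0\in W^{1,q}(\Omega)$ with $\Vert u_0\Vert_{L^1(\Omega)}=\Lambda$ whose energy $\mathcal{F}(u_0,v_0)$ lies below the infimum of $\mathcal{F}$ over all steady states of mass $\Lambda$. The restriction $\Lambda\notin4\pi\N$ is the usual compactness condition for the stationary problem. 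From this, the solution $(u,v)$ of \eqref{eps=0 System} with this datum is global, since finite-time blow-up is excluded for such motilities, and it is unbounded, i.e. $\limsup_{t\to\infty}\Vert u(\cdot,t)\Vert_{L^\infty(\Omega)}=\infty$. The argument is the standard one: if $u$ stayed bounded, the dissipation $\int_0^\infty\!\!\int_\Omega ue^{-v}|\nabla(\ln u - v)|^2<\infty$ together with compactness would give convergence along a sequence of times to a steady state with energy at most $\mathcal{F}(u_0,v_0)$, which is a contradiction. I will mostly cite this result, checking only that radial symmetry and the regularity in \eqref{Generalvoraussetzungen} can be arranged at the same time. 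The constraint $\kappa\ge0$ (including $\kappa=0$) does not matter here, because Theorem \ref{Theorem Konvergenzen} works with any fixed logistic coefficients.

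Given $M>0$, unboundedness gives a time $t_0$ and a point $x_0\in\overline\Omega$ with $u(x_0,t_0)>M+1$. By continuity we may take $x_0\in\Omega$, and we set $T(M):=t_0+1$. Suppose the claim failed for every $\tilde\eps\in(0,1)$. Then there would be a null sequence $\eps_j$ with $u_{\eps_j}\le M$ on $\Omega\times(0,T(M))$. By Theorem \ref{Theorem Konvergenzen}, a subsequence satisfies $u_{\eps_{j_k}}\to\tilde u$ in $C^0_{loc}(\overline\Omega\times[0,\infty))$, where $(\tilde u,\tilde v)$ is a classical solution of \eqref{eps=0 System} with the same initial datum. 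Uniform convergence on $\overline\Omega\times[0,T(M)]$ then gives $\tilde u(x_0,t_0)\le M$.

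To reach a contradiction I need $\tilde u=u$. This requires a uniqueness statement for classical solutions of \eqref{eps=0 System} in the class $C^0(\overline\Omega\times[0,\infty))\cap C^{2,1}(\overline\Omega\times(0,\infty))$ with continuous initial trace $u_0$. This is the step I expect to be the main obstacle. Local-in-time uniqueness should follow from an $L^2$ or $(W^{1,2})^*$-type energy estimate for the difference $w=u-\tilde u$. Here one uses that both solutions are bounded on $[0,T(M)]$ and that $v-\tilde v=(I-\Delta)^{-1}w$ is controlled by elliptic regularity (Lemma \ref{elliptische Regularitätstheorie}), and Gronwall's inequality then closes the estimate. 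The delicate point is the behaviour near $t=0$, where only continuity of $u$ is available. I would handle this by testing with $(I-\Delta)^{-1}w$, which only needs $L^\infty$ bounds on $u,\tilde u$ and on $v,\tilde v$ up to their first derivatives. Alternatively, the uniqueness from \cite{FUJIE20205338} can be cited directly. With $\tilde u=u$ we get $u(x_0,t_0)\le M$, contradicting the choice of $x_0,t_0$. Hence some $\tilde\eps(M)$ exists, and for each $\eps<\tilde\eps(M)$ there is $(x_\eps,t_\eps)\in\Omega\times(0,T(M))$ with $u_\eps(x_\eps,t_\eps)>M$.
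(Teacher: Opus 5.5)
Your proposal is correct and follows essentially the same route as the paper: you cite the infinite-time blow-up result of \cite{FUJIE20205338} for the limit system, assume a null sequence $(\eps_j)$ with $u_{\eps_j}\le M$ on $\Omega\times(0,T(M))$, pass to the limit via the uniform convergence results, and identify the limit with $u$ through uniqueness of classical solutions of \eqref{eps=0 System}. The differences are cosmetic: you pick $(x_0,t_0)$ directly from unboundedness, so $\limsup=\infty$ suffices, whereas the paper uses $\lim=\infty$ and then backs off in time by continuity; and you sketch a $(W^{1,2})^*$-type uniqueness argument where the paper simply invokes the uniqueness part of Lemma \ref{lokale Existenz}.
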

This result shows that signal-dependent motility is capable of forming structure (at least in the sense
of arriving at high concentrations) within finite time, even if logistic population dynamics are included
in the system, which usually favour boundedness of solutions by (either their initial value or) the
carrying capacity of the domain.
The proof is mainly based on the known infinite time blow-up result from \cite[Theorem 1.3]{FUJIE20205338}.
An important tool will be a local-in-time upper bound for $v_\eps$,
which will give us a lower bound for $e^{-v_\eps}$. To show that classical solutions
of \eqref{eps=0 System} are limits of classical solutions of \eqref{Hauptsystem}, we introduce the concept
of $W^{1,2}$-solutions in Definition \ref{Stark schwache Lösungsdefinition}. Parabolic
regularity theory will then, in a subsequence step,
yield that the limit functions are even classical solutions.

\section{Preliminaries}
Let us state the local existence and uniqueness result for solutions of
\eqref{Hauptsystem} and \eqref{eps=0 System}.
It can be shown by a fixed point argument, where parabolic \cite{Lieberman1996, LSU}
and elliptic regularity theory \cite{GilbargTrudinger2001} are useful.
The case $\eps=0$ is proven by \cite[Lemma 3.1]{ahn2019global},
while for $\eps>0$ one can use some ideas from the proof for the fully-parabolic system
\cite[Lemma 2.1]{jin2018boundedness}.
\begin{lem}\label{lokale Existenz}
 Suppose \eqref{Generalvoraussetzungen}.
 Then there exists $T_{max,\eps}
 \in(0,\infty]$ such that system \eqref{Hauptsystem} permits a unique nonnegative, classical solution
 \begin{align*}
  (u_\eps,v_\eps)\in\left( C^0(\overline{\Omega}
 \times[0,T_{max,\eps}))\cap C^{2,1}(\overline{\Omega}\times(0,T_{max,\eps}))\right)^2.
 \end{align*}
 If $T_{max,\eps}<\infty$, then
 \begin{align*}
  \lim_{t\nearrow T_{max,\eps}}\Vert u_{\eps}(\cdot,t)\Vert_{L^\infty(\Omega)}=\infty.
 \end{align*}
\end{lem}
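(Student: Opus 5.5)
We prove Lemma \ref{lokale Existenz} by a Banach fixed point argument on a short time interval, followed by parabolic regularity bootstrapping and a continuation argument. Fix $\eps\in(0,1)$, $q>2$ as in \eqref{Generalvoraussetzungen}, and $R>\Vert u_0\Vert_{L^\infty(\Omega)}$; note $W^{1,q}(\Omega)\hookrightarrow C^0(\overline\Omega)$ since $n=2$. For small $T>0$ consider the closed set
\[
S_T:=\{\tilde u\in C^0(\overline\Omega\times[0,T]):\ \Vert\tilde u\Vert_{L^\infty(\Omega\times(0,T))}\le R\}.
\]
Given $\tilde u\in S_T$, we let $\tilde v(\cdot,t):=(I-\Delta)^{-1}\tilde u(\cdot,t)$ under Neumann conditions. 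Elliptic $W^{2,p}$ theory \cite{GilbargTrudinger2001} with $p>2$ gives $\tilde v\in C^0([0,T];W^{2,p}(\Omega))\hookrightarrow C^0([0,T];C^{1}(\overline\Omega))$ with bounds depending only on $R$. We then rewrite the first equation as a linear equation in divergence form,
\[
u_t=\nabla\cdot\bigl(e^{-\tilde v}\nabla u-e^{-\tilde v}u\nabla\tilde v\bigr)+\eps(\kappa u-\mu u\tilde u),
\]
with uniformly elliptic principal part, since $e^{-\tilde v}\ge e^{-\Vert\tilde v\Vert_\infty}>0$, and bounded lower order coefficients. Standard theory \cite{LSU,Lieberman1996} yields a unique weak solution $u=:\Phi(\tilde u)$, and this solution is Hölder continuous up to $t=0$ because $u_0\in W^{1,q}\subset C^\alpha$. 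Energy/semigroup estimates give $\Vert\Phi(\tilde u)\Vert_\infty\le R$ for $T$ small. The same estimates applied to differences, using that $\tilde u\mapsto\tilde v$ is Lipschitz from $L^\infty$ into $W^{1,\infty}$, show that $\Phi$ is a contraction for small $T$. This produces a fixed point $u$.

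Next we bootstrap regularity. Once $u$ is Hölder continuous, elliptic Schauder theory gives $v\in C^{2+\alpha,\alpha/2}$ locally. We then expand the equation as
\[
u_t=e^{-v}\Delta u-2e^{-v}\nabla v\cdot\nabla u+(\dots)u+\eps(\kappa u-\mu u^2).
\]
For the coefficient of $u$, which contains $\Delta v=v-u$, we use the second equation. Parabolic Schauder estimates then give $u\in C^{2+\alpha,1+\alpha/2}(\overline\Omega\times[\tau,T])$ for every $\tau>0$, and $v\in C^{2,1}$ follows, with time regularity of $v$ inherited from that of $u$ through the elliptic problem.

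For nonnegativity, we apply the parabolic comparison principle to the linear equation for $u$ with $u_0\ge0$, using that the zero-order terms are bounded. Then $v\ge0$ follows from the elliptic maximum principle.

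Uniqueness comes from a Gronwall argument on differences. We test the difference of the first equations, or use the contraction estimate, in $L^2$, again using Lipschitz dependence of $v$ on $u$.

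Finally, we define $T_{max,\eps}$ as the supremum of existence times. The crucial point is that the local existence time $T$ above depends only on an upper bound for $\Vert u(\cdot,t_0)\Vert_{L^\infty}$ and not on higher norms. If the existence time were to depend on the $W^{1,q}$ norm of the data instead, one would additionally need a uniform $W^{1,q}$ bound along the solution. Such a bound is available from the $L^\infty$ bound via the regularity of the previous step. Hence if $\limsup_{t\nearrow T_{max,\eps}}\Vert u\Vert_\infty<\infty$, restarting from times close to $T_{max,\eps}$ extends the solution beyond $T_{max,\eps}$, which is a contradiction. This gives the extensibility criterion. Note that the $\liminf$ formulation, i.e.\ the full limit, follows because the local existence time is uniform for bounded data.

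I expect the main obstacle to be the contraction estimate. The quasilinear structure $\Delta(e^{-v}u)$ couples $\nabla\tilde v$ and $\Delta\tilde v$ into the coefficients, so the iteration space must control $\nabla \tilde v$ and handle $\Delta\tilde v=\tilde v-\tilde u$ without losing derivatives. Working in the divergence form above and estimating differences in $L^\infty$ via Neumann heat semigroup smoothing estimates is the first approach I would try. If that fails, I would fall back to Schauder's fixed point theorem in a Hölder space, as in \cite[Lemma 2.1]{jin2018boundedness}.
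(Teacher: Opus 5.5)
The one step that does not go through as written is the contraction, and it is the step that produces the fixed point. The rest of your plan (frozen-coefficient map on the $L^\infty$-ball $S_T$, Schauder bootstrap, comparison for nonnegativity, Gronwall for uniqueness, restart for extensibility) matches the paper.

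Write $u_i=\Phi(\tilde u_i)$ and $a_i:=e^{-\tilde v_i}$. The difference $w=u_1-u_2$ solves
\[
w_t=\nabla\cdot\bigl(a_1\nabla w-a_1w\nabla\tilde v_1\bigr)+\nabla\cdot\bigl((a_1-a_2)\nabla u_2-(a_1\nabla\tilde v_1-a_2\nabla\tilde v_2)u_2\bigr)+\eps(\kappa-\mu\tilde u_1)w-\eps\mu(\tilde u_1-\tilde u_2)u_2 ,
\]
so the source contains $\nabla u_2$. The Lipschitz bound for $\tilde u\mapsto\tilde v$ from $L^\infty$ into $W^{1,\infty}$ only controls the coefficient differences. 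To turn this divergence-form source into a bound on $\Vert w\Vert_{L^\infty(\Omega\times(0,T))}$, the standard $L^\infty$ estimate needs $\nabla u_2\in L^r(\Omega\times(0,T))$ with $r>n+2=4$, uniformly over $\tilde u_2\in S_T$. In nondivergence form you would even need $D^2u_2\in L^\infty$. That is a gradient estimate for an equation whose leading coefficient is merely continuous in time, and neither energy estimates nor Neumann heat semigroup estimates supply it (the principal part is $\nabla\cdot(e^{-\tilde v}\nabla\,\cdot)$, not $\Delta$, and $e^{-\tilde v}-1$ is not small). Energy estimates for $w$ control $\sup_t\Vert w(\cdot,t)\Vert_{L^2(\Omega)}$, which is not the sup-norm in which $S_T$ is closed and in which you claim the contraction. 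The self-map property does not come from energy estimates either. The clean way is comparison with $\Vert u_0\Vert_{L^\infty(\Omega)}e^{K(R)t}$, where $K(R)$ bounds the zero-order coefficient $e^{-\tilde v}(\vert\nabla\tilde v\vert^2-\tilde v+\tilde u)+\eps(\kappa-\mu\tilde u)$.

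There are two ways to close this.

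\emph{Contract in a weaker metric.} Keep the ball $\{\Vert\tilde u\Vert_{L^\infty}\le R\}$ but contract in $C^0([0,T];L^2(\Omega))$, where the ball is closed. Test the difference equation by $w$ and use $\Vert\tilde v_1-\tilde v_2\Vert_{W^{2,2}(\Omega)}\le C\Vert\tilde u_1-\tilde u_2\Vert_{L^2(\Omega)}$ together with $W^{2,2}(\Omega)\hookrightarrow L^\infty(\Omega)$ in two dimensions. This gives
\[
\sup_{t\in(0,T)}\Vert w(\cdot,t)\Vert_{L^2(\Omega)}^2\le C(R)\Bigl(T+\int_0^T\Vert\nabla u_2\Vert_{L^2(\Omega)}^2\Bigr)\sup_{t\in(0,T)}\Vert(\tilde u_1-\tilde u_2)(\cdot,t)\Vert_{L^2(\Omega)}^2 .
\]
This is a contraction for small $T$ once you add the uniform bound $\sup_t\Vert\nabla\Phi(\tilde u)(\cdot,t)\Vert_{L^2(\Omega)}\le C(R,\Vert u_0\Vert_{W^{1,2}(\Omega)})$, obtained by testing the nondivergence form by $-\Delta u$.

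\emph{Use your fallback, which is the paper's proof.} Apply Schauder's theorem on $S_T$. By \cite[Theorem V.1.1]{LSU}, $\Phi(S_T)$ is bounded in $C^{\theta,\frac{\theta}{2}}(\overline\Omega\times[0,T])$, hence relatively compact by Arzel\`a--Ascoli. Continuity of $\Phi$ then follows qualitatively from this compactness, the energy bound, and uniqueness for the linear problem. No difference estimate and no control of $\nabla u_2$ is needed.

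The Schauder route also repairs your last step. If the self-map property comes from comparison, $T$ is fixed by $\Vert u_0\Vert_{L^\infty(\Omega)}e^{K(R)T}\le R$, so it depends only on $\Vert u_0\Vert_{L^\infty(\Omega)}$; the H\"older norm of $u_0$ enters only the compactness. That is what the full limit requires. Along a sequence $t_k\nearrow T_{max,\eps}$ where only $\Vert u_\eps(\cdot,t_k)\Vert_{L^\infty(\Omega)}$ is bounded, you have no control of $\Vert u_\eps(\cdot,t_k)\Vert_{W^{1,q}(\Omega)}$. In a contraction scheme $T$ depends on a gradient norm of the data, so your restart argument only yields $\sup_{t<T_{max,\eps}}\Vert u_\eps(\cdot,t)\Vert_{L^\infty(\Omega)}=\infty$, not the limit.
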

\begin{proof}
We set $R:=\Vert u_0\Vert_{L^\infty(\Omega)}+1$ and choose $T\in(0,1)$.
In the Banach space $X:=C^0(\overline{\Omega}\times[0,T])$ we
define the nonempty, closed and convex set $S_T:=\{ \tilde{u}_\eps\in X\mid
\Vert \tilde{u}_\eps(\cdot,t)\Vert_{L^\infty(\Omega)}\leq R \}$.
Then by \cite[Theorem 8.34]{GilbargTrudinger2001} (only stated for the Dirichlet case there), for any
$\tilde{u}_\eps$ there is a unique $\tilde{v}_\eps(\cdot,t)\in C^{1+\theta_1}(\overline{\Omega})$
with $\theta_1\in(0,1)$ and $t\in(0,T)$ such that $\tilde{v}_\eps$ solves
\begin{align}\label{elliptische Gleichung in Existenz und Eindeutigkeit}
 -\Delta\tilde{v}_\eps+\tilde{v}_\eps=\tilde{u}_\eps\text{ in }\Omega\times(0,T),\quad
 \partial_\nu\tilde{v}_\eps=0\text{ on }\partial\Omega\times(0,T).
\end{align}

Now, we define an operator
$\Phi:S_T\to X,\:\tilde{u}_\eps\mapsto u_\eps$, where
$u_\eps\in C^{\theta_2,\frac{\theta_2}{2}}(\overline{\Omega}\times[0,T])$ is a solution to
\begin{equation*}
 \begin{cases}
  u_{\eps t}=\Delta(u_\eps e^{-\tilde{v}_\eps})+\eps(\kappa u_\eps-\mu u_\eps\tilde{u}_\eps)&
  \text{in }\Omega\times(0,T),\\
  \partial_\nu u_\eps=0 &\text{on }\partial\Omega\times(0,T),\\
  u_\eps(x,0)=u_0(x)&\quad\text{in }\Omega\times\{0\},
 \end{cases}
\end{equation*}
by \cite[Theorem V.1.1]{LSU} for some $\theta_2\in(0,1)$.
Then we can show that $\Phi:S_T\to S_T$ is continuous and, moreover, by
the Arzel\`{a}-Ascoli theorem we obtain that $\Phi$ is compact. Hence, by Schauder's fixed point theorem we
obtain a local solution $u_\eps$ of the parabolic equation in \eqref{Hauptsystem}.
Using this solution, we can apply elliptic regularity theory and obtain
$v_\eps\in C^{2+\theta_2,\frac{\theta_2}{2}}(\overline{\Omega}\times[0,T])$, where $v_\eps$ is the unique
solution of \eqref{elliptische Gleichung in Existenz und Eindeutigkeit} with right-hand side $u_\eps$.
By \cite[Theorem IV.5.3]{LSU} we obtain
$u_\eps\in C^{2+\theta_2,\frac{2+\theta_2}{2}}(\overline{\Omega}\times[0,T])$. Hence, $(u_\eps,v_\eps)$ is
a local classical solution of \eqref{Hauptsystem}.
Uniqueness follows by a Gronwall argument. The extensibility criterion results from the
space used during the fixed point construction.
That $u_\eps,v_\eps$ are nonnegative follows by a comparison argument or
the Hopf Lemma, respectively.
\end{proof}

\begin{rem}
 Due to \cite{FUJIE20205338}, we can directly conclude $T_{max,\eps}=\infty$ for all $\eps\in(0,1)$.
\end{rem}

One difference to chemotaxis systems without a logistic source term is that in \eqref{Hauptsystem}
the total mass of cells is not constant anymore.
Due to an ODI comparison argument we can show that the total mass of is still bounded. Moreover, since
the second equation in \eqref{Hauptsystem} is elliptic, the total mass of the signal is
the same as the total mass of the cells.
\begin{lem}\label{Beschränktheit von u in L^1}
Suppose \eqref{Generalvoraussetzungen}. Then for all $\eps\in(0,1)$ and $t\in[0,\infty)$ the
corresponding classical solution $(u_\eps,v_\eps)$ of \eqref{Hauptsystem} satisfies
 \begin{align*}
  \io u_\eps(\cdot,t)\leq\max\left\{ \io u_0,\frac{\kappa}{\mu}\vert\Omega\vert \right\}=: m.
 \end{align*}
\end{lem}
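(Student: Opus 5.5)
We integrate the first equation of \eqref{Hauptsystem} over $\Omega$ and derive an ODI for the mass $y_\eps(t):=\io u_\eps(\cdot,t)$. Since $(u_\eps,v_\eps)$ is classical on $\overline{\Omega}\times(0,\infty)$ by Lemma \ref{lokale Existenz} and the subsequent remark, the divergence theorem gives
\begin{align*}
 \io\Delta(e^{-v_\eps}u_\eps)=\int_{\partial\Omega}\partial_\nu(e^{-v_\eps}u_\eps)
 =\int_{\partial\Omega}e^{-v_\eps}\left(\partial_\nu u_\eps-u_\eps\partial_\nu v_\eps\right)=0,
\end{align*}
because of the homogeneous Neumann conditions for both components. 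Hence $y_\eps\in C^0([0,\infty))\cap C^1((0,\infty))$ satisfies
\begin{align*}
 y_\eps'(t)=\eps\kappa y_\eps(t)-\eps\mu\io u_\eps^2(\cdot,t)\quad\text{for all }t>0.
\end{align*}
By the Cauchy--Schwarz inequality, $\io u_\eps^2\geq\frac{1}{\vert\Omega\vert}y_\eps^2$, and therefore
\begin{align*}
 y_\eps'\leq\eps y_\eps\left(\kappa-\frac{\mu}{\vert\Omega\vert}y_\eps\right)\quad\text{on }(0,\infty),
\end{align*}
where $y_\eps\geq0$ by the nonnegativity of $u_\eps$.

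The remaining step is an elementary comparison argument showing $y_\eps\leq m$. Suppose, to the contrary, that $y_\eps(t_1)>m$ for some $t_1>0$. Since $y_\eps(0)=\io u_0\leq m$ and $y_\eps$ is continuous, we set $t_0:=\sup\{t\in[0,t_1]\mid y_\eps(t)\leq m\}<t_1$. Then $y_\eps(t_0)=m$ and $y_\eps>m\geq\frac{\kappa}{\mu}\vert\Omega\vert$ on $(t_0,t_1]$. On this interval the right-hand side of the ODI is negative, so $y_\eps$ is decreasing there. This gives $y_\eps(t_1)\leq y_\eps(t_0)=m$ by the mean value theorem, which is a contradiction. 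The bound is independent of $\eps$, since $\eps>0$ only scales the right-hand side.

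The only delicate point is the regularity at $t=0$. Here $u_\eps$ is only continuous, so $y_\eps$ is only continuous at $0$ and differentiable for $t>0$. The argument above uses only continuity at $t_0$ and differentiability on $(t_0,t_1)$, so this causes no difficulty. Also $T_{max,\eps}=\infty$ by the remark following Lemma \ref{lokale Existenz}, so the estimate holds for all $t\in[0,\infty)$.
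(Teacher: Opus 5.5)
Your proof is correct and follows the paper's route. You integrate the first equation, remove the diffusion term via the Neumann conditions, use Cauchy--Schwarz to get a logistic ODI for the mass, and compare with the constant supersolution $m$, a comparison step you spell out in more detail than the paper does. Your constant $\frac{\mu}{\vert\Omega\vert}$ in the ODI is the right one: the paper's display writes $\mu\vert\Omega\vert$, a slip that would not yield the stated threshold $\frac{\kappa}{\mu}\vert\Omega\vert$.
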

\begin{proof}
We define $y_\eps(t):=\io u_\eps(\cdot,t)$. Then by Hölder's inequality we obtain
\begin{align*}
 y_\eps'(t)=\frac{d}{dt}\io u_\eps(\cdot,t)=\io u_{\eps t}(\cdot,t)
 &=\io \Delta(u_\eps e^{-v_\eps})(\cdot,t)+\eps\kappa\io u_\eps(\cdot,t)-\eps\mu\io u_\eps^2(\cdot,t)\\
 &\leq\eps\kappa\io u_\eps(\cdot,t)-\eps\mu\vert\Omega\vert\left( \io u_\eps(\cdot,t) \right)^2,
\end{align*}
for all $t\in(0,\infty)$ and $\eps\in(0,1)$,
where the first integral can be neglected due to the homogenuous boundary conditions.
Hence, $y_\eps(t)$ satisfies the ODI $\frac{1}{\eps}y_\eps'(t)\leq\kappa y(t)-\mu\vert\Omega\vert y_\eps^2(t)$
for all $t\in[0,\infty)$, which has the bounded supersolution $m$.
\end{proof}
A direct consequence of the ellipticity of the second equation in \eqref{Hauptsystem}
is that $u_\eps$ and $v_\eps$ share the same total mass.
By Lemma \ref{Beschränktheit von u in L^1} it follows that the total mass of the signal is also bounded.
\begin{lem}\label{u=v in L^1}
Suppose \eqref{Generalvoraussetzungen}. Then for all $\eps\in(0,1)$ and $t\in(0,\infty)$ the corresponding
classical solution $(u_\eps,v_\eps)$ of \eqref{Hauptsystem} satisfies
 \begin{align*}
  \io u_\eps(\cdot,t)=\io v_\eps(\cdot,t)\quad\forall t\in(0,\infty).
 \end{align*}

\end{lem}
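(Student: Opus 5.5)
The plan is to integrate the elliptic equation in \eqref{Hauptsystem} over $\Omega$ at each fixed time and use the homogeneous Neumann boundary condition to remove the Laplacian term. Fix $\eps\in(0,1)$ and $t\in(0,\infty)$. By Lemma \ref{lokale Existenz} together with $T_{max,\eps}=\infty$, the pair $(u_\eps,v_\eps)$ is a classical solution on $\overline{\Omega}\times(0,\infty)$. In particular, $v_\eps(\cdot,t)\in C^2(\overline{\Omega})$ and $u_\eps(\cdot,t)\in C^0(\overline{\Omega})$, so all integrals over $\Omega$ below are finite. For this fixed $t$, I would integrate the identity
\begin{align*}
 0=\Delta v_\eps(\cdot,t)-v_\eps(\cdot,t)+u_\eps(\cdot,t)
\end{align*}
over $\Omega$.

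The key step is the divergence theorem, applicable since $\Omega$ is smooth and bounded and $v_\eps(\cdot,t)\in C^2(\overline{\Omega})$. It gives
\begin{align*}
 \io\Delta v_\eps(\cdot,t)=\int_{\partial\Omega}\partial_\nu v_\eps(\cdot,t)\,dS=0,
\end{align*}
where the last equality is the boundary condition $\partial_\nu v_\eps=0$ on $\partial\Omega\times(0,\infty)$. Substituting this into the integrated identity yields $0=-\io v_\eps(\cdot,t)+\io u_\eps(\cdot,t)$, which is exactly the claim.

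There is no real obstacle. The only point to check is regularity up to the boundary, so that the divergence theorem applies and the boundary integral makes sense. This is guaranteed because $(u_\eps,v_\eps)$ lies in $C^{2,1}(\overline{\Omega}\times(0,\infty))$, which is why the statement is restricted to $t>0$. If one also wants the case $t=0$, the identity follows from the definition $v_0=(I-\Delta)^{-1}u_0$, by the same computation or by continuity. Finally, combining the identity with Lemma \ref{Beschränktheit von u in L^1} gives the bound $\io v_\eps(\cdot,t)\le m$ mentioned before the statement.
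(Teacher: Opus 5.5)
Your proposal is correct and matches the paper's proof: you integrate the elliptic equation over $\Omega$ at fixed $t$. You then use the divergence theorem with $\partial_\nu v_\eps=0$ to eliminate $\io\Delta v_\eps(\cdot,t)$, which yields $\io u_\eps(\cdot,t)=\io v_\eps(\cdot,t)$.
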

\begin{proof}
 Since $\partial_\nu v=0$ on $\partial\Omega\times(0,\infty)$,
 integrating the second equation of \eqref{Hauptsystem} over $\Omega$ and applying
 the Gaussian divergence theorem yields
 \begin{align*}
  \io u_\eps(\cdot,t)=-\io\Delta v_\eps(\cdot,t)+\io v_\eps(\cdot,t)
  =-\int_{\partial\Omega}\nabla v_\eps(\cdot,t)\cdot\nu
  +\io v_\eps(\cdot,t)=\io v_\eps(\cdot,t),
 \end{align*}
for all $t\in(0,\infty)$ and $\eps\in(0,1)$ due to the homogenuous boundary conditions.
\end{proof}

Since solutions are nonnegative for $u_0\geq0$, we obtain two important estimates for the logistic term
in the first equation of \eqref{Hauptsystem}. They will be useful to find $\eps$-independent
$L^p$-bounds for the solutions of \eqref{Hauptsystem}.
\begin{lem}\label{Lemma Abschätzung Logistik}
 \begin{enumerate}
  \item Let $f:[0,\infty)\to\R,\: x\mapsto\kappa x^p-\mu x^{p+1}$ for $\kappa,\mu>0$ and $p\geq 1$.
  Then
  \begin{align*}
   \max_{x\geq 0}f(x)<\kappa\left( \frac{\kappa}{\mu} \right)^p.
  \end{align*}
  \item Let $g:[0,\infty)\to\R$ be defined as
  \begin{equation*}
        g(x):=\left\{\begin{array}{cc}
            0,  &x=0,  \\
            \kappa x\ln x-\mu x^2\ln x,& x>0.
        \end{array} \right.
    \end{equation*}
Then there is $C>0$ such that
\begin{align*}
 g(x)\leq C\quad\forall x\geq 0.
\end{align*}
 \end{enumerate}
\end{lem}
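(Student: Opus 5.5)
For the first part, I will split the half-line at the point where $f$ changes sign. Write $f(x)=x^p(\kappa-\mu x)$. For $x\geq\frac{\kappa}{\mu}$ we have $\kappa-\mu x\leq 0$, so $f(x)\leq 0<\kappa(\kappa/\mu)^p$. For $0\leq x<\frac{\kappa}{\mu}$ I will use monotonicity of $x\mapsto x^p$ together with $\kappa-\mu x\leq\kappa$; this gives
\begin{align*}
 f(x)=x^p(\kappa-\mu x)<\left(\frac{\kappa}{\mu}\right)^p\kappa .
\end{align*}
The inequality is strict because $x^p<(\kappa/\mu)^p$ whenever $x<\kappa/\mu$ and $p\geq1$. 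If $x=0$, then $f(0)=0<\kappa(\kappa/\mu)^p$ directly. Since $f$ is continuous on the closed interval $[0,\kappa/\mu]$, the maximum over $[0,\infty)$ exists. It is attained in $[0,\kappa/\mu]$ at a point where the strict bound holds, so the claim follows. (One could also compute the maximiser $x^*=\frac{p\kappa}{(p+1)\mu}$ explicitly, but the elementary bound suffices.)

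For the second part, I will write $g(x)=x\ln x\,(\kappa-\mu x)$ for $x>0$ and treat three regions.

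On $(0,1]$ we have $\ln x\leq0$. Moreover $|x\ln x|\leq e^{-1}$ there and $|\kappa-\mu x|\leq\kappa+\mu$. Hence $g(x)\leq e^{-1}(\kappa+\mu)$. This also shows $g$ is continuous at $0$, since $x\ln x\to0$.

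On $[1,\kappa/\mu]$ (nonempty only if $\kappa/\mu>1$), $g$ is continuous on a compact interval and therefore bounded. Explicitly, $g(x)\leq\kappa\frac{\kappa}{\mu}\ln\frac{\kappa}{\mu}$ on this interval.

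On $[\max\{1,\kappa/\mu\},\infty)$ both $x\ln x\geq0$ and $\kappa-\mu x\leq0$, so $g\leq0$.

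Taking $C$ as the maximum of these three bounds, plus $1$ to ensure positivity, gives the assertion.

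There is no real obstacle here. The only point needing care is the behaviour near $x=0$, where $\ln x$ is unbounded; this is handled by the estimate $|x\ln x|\leq e^{-1}$ on $(0,1]$. The other point is that the region where $\kappa-\mu x<0$ must be combined with the sign of $\ln x$; the case split at $1$ and at $\kappa/\mu$ takes care of this.
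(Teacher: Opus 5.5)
Your proof is correct, and it takes a more elementary route than the paper, whose entire argument is the remark that computing $f'$, $g'$ and the maxima of $f,g$ yields the result. You never differentiate. Instead you factor $f(x)=x^p(\kappa-\mu x)$ and $g(x)=x\ln x\,(\kappa-\mu x)$, and bound each factor separately on the regions where $\kappa-\mu x$ and $\ln x$ have fixed sign, using also $|x\ln x|\le e^{-1}$ on $(0,1]$.

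Your route gives an explicit constant, e.g. $C=\max\{e^{-1}(\kappa+\mu),\kappa\frac{\kappa}{\mu}\ln\frac{\kappa}{\mu}\}+1$, where the second entry is needed only when $\kappa>\mu$. It also avoids a difficulty the paper passes over. The critical-point equation for $g$, namely $\kappa(\ln x+1)=\mu x(2\ln x+1)$, has no closed-form solution. So the paper's ``compute the maximum'' tacitly relies on $g$ being continuous at $0$ and on $g(x)\to-\infty$ as $x\to\infty$.

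The calculus route, in turn, gives the sharp value $\max_{x\ge0}f=f\bigl(\tfrac{p\kappa}{(p+1)\mu}\bigr)=\kappa(\frac{\kappa}{\mu})^p\frac{p^p}{(p+1)^{p+1}}$. This makes the strict inequality immediate and quantifies the gap. You instead need the separate observation that the maximum is attained, in order to pass from a pointwise strict bound to a strict bound on the maximum, and you handle this correctly.

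The only thing you leave implicit is $g(0)=0\le C$, which is trivial since $C\ge1$.
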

\begin{proof}
 Computing $f'$, $g'$ and the maxima of $f,g$ yield the result.
\end{proof}

We recall two important estimates based on elliptic regularity theory, which are also stated in
\cite[Lemma 2.3]{Lankeit2015}.
\begin{lem}\label{elliptische Regularitätstheorie}
 For $q\geq 1$, $\alpha>0$, there exists a constant $C>0$ such that any (classical) solution $v$ of
 \begin{align*}
  -\Delta v+v=f\quad\text{in }\Omega,\quad\partial_\nu v=0\quad\text{on }\partial\Omega
 \end{align*}
satisfies
\begin{align*}
 &\Vert v\Vert_{W^{2,q}(\Omega)}\leq C\Vert f\Vert_{L^q(\Omega)},\\
 &\Vert v\Vert_{C^{2+\alpha}(\Omega)}\leq C\Vert f\Vert_{C^\alpha(\Omega)},
\end{align*}
for any $f\in L^q(\Omega)$ or $f\in C^{\alpha}(\Omega)$, respectively.
\end{lem}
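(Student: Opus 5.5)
This lemma is classical elliptic regularity for the Neumann problem, so the plan is to assemble standard results rather than prove anything new. We treat the two estimates separately. In both cases the first ingredient is the a priori $L^1$-type control furnished by the zeroth-order term. Testing $-\Delta v+v=f$ against $1$ and using the Neumann condition gives $\io v=\io f$. For nonnegative $f$ the weak maximum principle also yields $v\ge0$. For general $f$ one instead uses that $I-\Delta$ with Neumann boundary conditions is an invertible, sectorial operator on $L^q(\Omega)$.

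\textbf{The $W^{2,q}$ estimate.} For $1<q<\infty$ we invoke the Agmon--Douglis--Nirenberg $L^q$ theory for oblique/Neumann problems:
\begin{align*}
\Vert v\Vert_{W^{2,q}(\Omega)}\le C\left(\Vert f\Vert_{L^q(\Omega)}+\Vert v\Vert_{L^q(\Omega)}\right).
\end{align*}
The lower-order term $\Vert v\Vert_{L^q}$ is absorbed by a standard compactness/contradiction argument, which uses the uniqueness of solutions of $-\Delta v+v=0$ with $\partial_\nu v=0$ (testing with $v$ gives $v\equiv0$). Alternatively, one notes directly that the Neumann heat semigroup is contractive on $L^q$, so $\Vert v\Vert_{L^q}\le\Vert f\Vert_{L^q}$ via $v=\int_0^\infty e^{-t}e^{t\Delta}f\,dt$.

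The endpoint $q=1$ is the main obstacle, because $W^{2,1}$ estimates fail in general. I would interpret the statement as in \cite[Lemma 2.3]{Lankeit2015}, which the paper cites, where it is applied only for $q>1$ or in weaker norms. If $q=1$ is genuinely needed, I would replace it by $\Vert v\Vert_{W^{1,r}}\le C\Vert f\Vert_{L^1}$ for $r<\frac{n}{n-1}$, obtained by duality. I would flag this restriction explicitly.

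\textbf{The Schauder estimate.} Here we use the Schauder theory for the Neumann problem, e.g.\ \cite[Theorem 6.30–6.31]{GilbargTrudinger2001} adapted to oblique boundary conditions:
\begin{align*}
\Vert v\Vert_{C^{2+\alpha}}\le C\left(\Vert f\Vert_{C^\alpha}+\Vert v\Vert_{C^0}\right).
\end{align*}
We then control $\Vert v\Vert_{C^0}$ by the maximum principle. At an interior maximum point $x_0$ we have $-\Delta v(x_0)\ge0$, so $v(x_0)\le f(x_0)$. At a boundary maximum point, Hopf's lemma together with $\partial_\nu v=0$ excludes a strict maximum unless $v$ is constant, which gives the same conclusion. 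Hence $\Vert v\Vert_{C^0}\le\Vert f\Vert_{C^0}\le\Vert f\Vert_{C^\alpha}$, and the estimate follows. (Here $\alpha\in(0,1)$ is implicit.)

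Since all the constants depend only on $\Omega$, $q$ and $\alpha$, this completes the plan.
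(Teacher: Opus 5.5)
Your plan is correct and is essentially what the paper does: the paper gives no argument of its own and simply recalls these estimates from \cite[Lemma 2.3]{Lankeit2015} as standard $L^q$ (Agmon--Douglis--Nirenberg) and Schauder theory for the Neumann problem, and removing the lower-order terms via $\Vert v\Vert_{L^q}\le\Vert f\Vert_{L^q}$ and the maximum principle, as you do, is the usual way to reach the stated form. Your caveats are also justified: the $W^{2,q}$ bound genuinely fails for $q=1$ (approximate a point source; the Green's function has $\vert D^2G\vert\sim\vert x-x_0\vert^{-2}\notin L^1$ in two dimensions), and the Schauder bound needs $\alpha\in(0,1)$, but this is harmless because every application in the paper uses some $q>1$ (e.g.\ $q=\frac{2(p+1)}{p+2}$, $2$, $3$) and some $\alpha\in(0,1)$.
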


As stated before, to find an upper bound for $v_\eps$ is important to find a lower bound for $e^{-v_\eps}$.
The following lemma and proof are taken from \cite[Lemma 3.2]{FUJIE20205338}.
\begin{lem}\label{Zeitabhängige obere Schranke für v}
Suppose \eqref{Generalvoraussetzungen}. Then there is $C>0$ such that for all $(x,t)\in\Omega\times[0,\infty)$
and $\eps\in(0,1)$ the corresponding classical solution $(u_\eps,v_\eps)$ of \eqref{Hauptsystem} satisfies
 \begin{align*}
  v_{\eps}(x,t)\leq Ce^{(1+\kappa)t}.
 \end{align*}
\end{lem}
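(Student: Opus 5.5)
The plan is to follow the argument of \cite[Lemma 3.2]{FUJIE20205338}: derive a pointwise differential inequality for $v_\eps$ by applying the resolvent $(I-\Delta)^{-1}$ (Neumann) to the first equation of \eqref{Hauptsystem}. The key observation is the operator identity $\Delta=I-(I-\Delta)$, so that for the smooth function $e^{-v_\eps}u_\eps$ (which satisfies homogeneous Neumann conditions, since both $\partial_\nu u_\eps=0$ and $\partial_\nu v_\eps=0$),
\begin{align*}
 (I-\Delta)^{-1}\Delta(e^{-v_\eps}u_\eps)=(I-\Delta)^{-1}(e^{-v_\eps}u_\eps)-e^{-v_\eps}u_\eps .
\end{align*}
Since $v_\eps=(I-\Delta)^{-1}u_\eps$ and $u_\eps\in C^{2,1}$ for $t>0$, elliptic regularity (Lemma \ref{elliptische Regularitätstheorie} applied to difference quotients in time) lets us differentiate in $t$. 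This gives
\begin{align*}
 v_{\eps t}+e^{-v_\eps}u_\eps=(I-\Delta)^{-1}\big(e^{-v_\eps}u_\eps\big)+\eps(I-\Delta)^{-1}\big(\kappa u_\eps-\mu u_\eps^2\big)
\end{align*}
in $\Omega\times(0,\infty)$.

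Next I estimate the right-hand side using the order preservation of $(I-\Delta)^{-1}$, which follows from the (Neumann) maximum principle. Since $v_\eps\ge0$ and $u_\eps\ge0$ by Lemma \ref{lokale Existenz}, we have $0\le e^{-v_\eps}u_\eps\le u_\eps$, hence $(I-\Delta)^{-1}(e^{-v_\eps}u_\eps)\le v_\eps$. Likewise $\kappa u_\eps-\mu u_\eps^2\le\kappa u_\eps$ gives $(I-\Delta)^{-1}(\kappa u_\eps-\mu u_\eps^2)\le\kappa v_\eps$. Dropping the nonnegative term $e^{-v_\eps}u_\eps$ on the left and using $\eps<1$, we obtain pointwise
\begin{align*}
 v_{\eps t}\le(1+\eps\kappa)v_\eps\le(1+\kappa)v_\eps\quad\text{in }\Omega\times(0,\infty).
\end{align*}
All constants here are manifestly independent of $\eps$, which is the whole point.

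For each fixed $x$, the ODI comparison $\frac{d}{dt}\big(e^{-(1+\kappa)t}v_\eps(x,t)\big)\le0$ on $(\tau,t)$ gives $v_\eps(x,t)\le e^{(1+\kappa)(t-\tau)}v_\eps(x,\tau)$. Letting $\tau\searrow0$ and using $v_\eps\in C^0(\overline\Omega\times[0,\infty))$ with $v_\eps(\cdot,0)=v_0$, we get $v_\eps(x,t)\le v_0(x)e^{(1+\kappa)t}$. Finally, $u_0\in W^{1,q}(\Omega)\hookrightarrow L^\infty(\Omega)$ for $q>2$, and the maximum principle gives $v_0\le\Vert u_0\Vert_{L^\infty(\Omega)}$. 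So we may take $C:=\Vert u_0\Vert_{L^\infty(\Omega)}$, independent of $\eps$.

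The main obstacle is justifying the identity for $v_{\eps t}$ rigorously: interchanging $\partial_t$ with $(I-\Delta)^{-1}$ and verifying that $e^{-v_\eps}u_\eps$ lies in the domain of the Neumann Laplacian. Both follow from the classical regularity in Lemma \ref{lokale Existenz} on $(0,\infty)$, together with the resolvent being bounded on $C^0(\overline\Omega)$. Restricting to $t>0$ and passing to $t\to0$ only through continuity avoids any issue at the initial time.
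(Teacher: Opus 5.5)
Your proposal is correct and follows essentially the same route as the paper (which in turn follows \cite[Lemma 3.2]{FUJIE20205338}). Both arguments apply $(I-\Delta)^{-1}$ to the first equation, use order preservation to get $(I-\Delta)^{-1}(e^{-v_\eps}u_\eps)\le v_\eps$, discard the $-\eps\mu u_\eps^2$ contribution, and conclude by a Gronwall argument; the only cosmetic differences are that you work with the pointwise differential inequality $v_{\eps t}\le(1+\kappa)v_\eps$ and name the constant $C=\Vert u_0\Vert_{L^\infty(\Omega)}$, whereas the paper integrates over $(0,t)$ first and applies the integral Gronwall lemma.
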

\begin{proof}
We plug $u_{\eps t}=(-\Delta v_\eps+v_{\eps})_t$ into the parabolic equation in \eqref{Hauptsystem} and
apply $(I-\Delta)^{-1}$. Since $(I-\Delta)^{-1}(\mu u_\eps^2)\geq 0$ by the maximum principle, we obtain
after integrating over $(0,t)$ for $t<\infty$
\begin{align*}
 v_\eps(x,t)\leq v_0(x)+\int_0^t (I-\Delta)^{-1}(e^{-v_\eps(x,s)}u_\eps(x,s)+\eps\kappa u_\eps(x,s))\:ds
 \quad\text{for all }x\in\Omega.
\end{align*}
Testing the elliptic equation of \eqref{Hauptsystem} by $e^{-v_\eps}$ and applying $(I-\Delta)^{-1}$, we have
\begin{align*}
 0\leq (I-\Delta)^{-1}(e^{-v_\eps}u_\eps)\leq(I-\Delta)^{-1}(u_\eps)=v_\eps
 \quad\text{in }\Omega\times(0,\infty).
\end{align*}
This implies (since $\eps\in(0,1)$)
\begin{align*}
 v_\eps(x,t)\leq v_0(x)+(1+\eps\kappa)\int_0^t v_\eps(x,s)\:ds
 \leq v_0(x)+(1+\kappa)\int_0^t v_\eps(x,s)\:ds\quad\text{for all }(x,t)\in\Omega\times(0,\infty).
\end{align*}
Finally, then the Gronwall Lemma yields the result.
\end{proof}

With that tools we can derive derive local-in-time $L^p(\Omega)$-bounds for $u_\eps$.

\section{Local-in-time but uniform-in-$\eps$ boundedness of $u_\eps$}
Our aim is to find (local-in-time) $L^\infty$-bounds for $u_\eps$ that are independent of $\eps$.
We start with the derivation of $L^p$-bounds
for small $p$. To derive this we recall an useful estimate from \cite{FujieSenba2016}.
\begin{lem}\label{Lp+1 Abschätzung für p in 1,2}
Suppose \eqref{Generalvoraussetzungen} and let $p\in(1,2)$. Then there is $C(p)>0$ such that for all
$t\in[0,\infty)$, $s>1$ and $\eps\in(0,1)$ the corresponding classical solution $(u_\eps,v_\eps)$ satisfies
 \begin{align*}
  \io u_\eps^{p+1}(\cdot,t)\leq&\frac{C(p+1)^2}{\ln(s)}\io\left( u_\eps\ln(u_\eps)+\frac{1}{e} \right)
  \cdot\io u_\eps^{p-2}\vert\nabla u_\eps\vert^2
  +6s^{p+1}\vert\Omega\vert+4C^2\vert\Omega\vert^{2-p}\Vert u_0\Vert_{L^1(\Omega)}^{p+1}.
 \end{align*}
\end{lem}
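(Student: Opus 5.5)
This estimate is the two-dimensional Gagliardo--Nirenberg-type inequality of Fujie--Senba, applied to each time slice. It uses only that $u_\eps(\cdot,t)$ is nonnegative and smooth on $\overline\Omega$, together with the $\eps$-independent mass bound from Lemma \ref{Beschränktheit von u in L^1}. I would therefore prove a purely functional statement for a nonnegative $w\in C^1(\overline\Omega)$ and then set $w=u_\eps(\cdot,t)$. Since the mass enters only through a constant, no $\eps$-dependence can appear.

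\textbf{Splitting at level $s$.} Fix $s>1$ and write $w=\min\{w,s\}+(w-s)_+$. Then
\begin{align*}
 \io w^{p+1}\le 2^{p}\io (w-s)_+^{p+1}+2^{p}s^{p+1}\vert\Omega\vert .
\end{align*}
The second term produces the contribution $6s^{p+1}\vert\Omega\vert$, since $2^p<4$ and the constants can be arranged.

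\textbf{Estimating the large part.} For $(w-s)_+$ I set $\varphi:=(w-s)_+^{(p+1)/2}$, or more conveniently $\varphi:=w^{p/2}\chi_{\{w>s\}}$ suitably truncated. I then apply the two-dimensional Gagliardo--Nirenberg inequality $\Vert\varphi\Vert_{L^{r}}^{r}\le C\Vert\nabla\varphi\Vert_{L^2}^{2}\Vert\varphi\Vert_{L^{q}}^{q}+C\Vert\varphi\Vert_{L^{q}}^{r}$, with exponents chosen so that $\varphi^{r}\sim w^{p+1}$ and $\varphi^{q}\sim w$, that is $q=2/p$ and $r=2(p+1)/p$. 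This is the Ladyzhenskaya-type inequality $\io\psi^{4}\lesssim\io|\nabla\psi|^2\io\psi^2$, adapted to these exponents. The gradient term gives
\begin{align*}
 \io|\nabla\varphi|^2=\frac{p^2}{4}\io_{\{w>s\}}w^{p-2}|\nabla w|^2 ,
\end{align*}
which matches the factor $\io u_\eps^{p-2}|\nabla u_\eps|^2$ together with the prefactor $(p+1)^2$ coming from the chain rule. The lower-order term is controlled by $\Vert w\Vert_{L^1}^{p+1}$, and Hölder's inequality supplies the factor $\vert\Omega\vert^{2-p}$ in the constant $4C^2\vert\Omega\vert^{2-p}\Vert u_0\Vert_{L^1}^{p+1}$. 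For $u_\eps$ this term is really $m^{p+1}$, where $m$ is the mass bound of Lemma \ref{Beschränktheit von u in L^1}.

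\textbf{Producing the logarithm.} Here I use
\begin{align*}
 \io_{\{w>s\}} w\le\frac{1}{\ln s}\io_{\{w>s\}}w\ln w\le\frac1{\ln s}\io\Bigl(w\ln w+\frac1e\Bigr),
\end{align*}
which holds because $w\ln w\ge-1/e$ everywhere. This bound is inserted in the $L^q$-factor, the one carrying the mass of the large part. To make it work, the Gagliardo--Nirenberg inequality must be applied in the form where the mass factor appears linearly, namely $\io\varphi^{r}\le C\io|\nabla\varphi|^2\cdot\io_{\{w>s\}}w+\text{l.o.t.}$. This linear dependence is valid precisely because $r=q+2$ in two dimensions.

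\textbf{Main obstacle.} The delicate step is the choice of truncation $\varphi$: it must vanish on $\{w\le s\}$ so that the mass factor is restricted to $\{w>s\}$, yet its gradient must still be dominated by $w^{(p-2)/2}|\nabla w|$. The restriction $p\in(1,2)$ is used exactly in the comparison between $(w-s)_+^{(p-1)}$ and $w^{p-1}$ and in the Hölder step. The result is exactly \cite{FujieSenba2016}, so in the end I would simply cite it, checking only that its constants do not depend on $\eps$.
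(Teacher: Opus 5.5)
Your plan matches the paper's proof, which consists entirely of the citation of \cite[Lemma 2.4 ii)]{FujieSenba2016}, and your outline of the underlying functional inequality is sound, with one mismatch: your level-$s$ splitting of $w$ does not fit the truncation $(w^{p/2}-s^{p/2})_+$ used in your Gagliardo--Nirenberg step (either split $w^{p/2}$ at $s^{p/2}$ instead, or keep $(w-s)_+$ and work with $(w-s)_+^{(p+1)/2}$, the embedding $W^{1,1}(\Omega)\hookrightarrow L^2(\Omega)$ and Cauchy--Schwarz, which is the variant in which your prefactor $(p+1)^2$ and the comparison $(w-s)_+^{p-1}\le w^{p-1}$ actually occur). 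Your remark that for $\eps>0$ the last term is really $\Vert u_\eps(\cdot,t)\Vert_{L^1(\Omega)}^{p+1}\le m^{p+1}$ rather than $\Vert u_0\Vert_{L^1(\Omega)}^{p+1}$, since mass is not conserved, is correct and is exactly what the bare citation glosses over; it is harmless because $C$ may be enlarged by a factor depending on $m/\Vert u_0\Vert_{L^1(\Omega)}$.
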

\begin{proof}
 \cite[Lemma 2.4 ii)]{FujieSenba2016}.
\end{proof}
By a testing procedure, we derive an ODI for the $L^p$-norms of $u_\eps$.
\begin{lem}\label{ODI Lp Norm}
Suppose \eqref{Generalvoraussetzungen} and let $T>0$. Moreover, let $(u_\eps,v_\eps)$ be a classical solution
of \eqref{Hauptsystem}. Then there is $C(T)>0$ such that for all $p\geq 1$ and $\eps,\eta\in(0,1)$,
we have
\begin{align*}
 \frac{1}{p}\frac{d}{dt}\io u_\eps^p\leq- C(T)(1-\eta)(p-1)\io u_\eps^{p-2}\vert\nabla u_\eps\vert^2
 +\frac{p-1}{4\eta}\io u_\eps^p\vert\nabla v_\eps\vert^2
 +\frac{\kappa^{p+1}}{\mu^p}\vert\Omega\vert.
\end{align*}
\end{lem}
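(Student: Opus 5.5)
We test the first equation of \eqref{Hauptsystem} by $u_\eps^{p-1}$ and integrate by parts, using the Neumann boundary conditions. Since $\nabla(e^{-v_\eps}u_\eps)=e^{-v_\eps}\nabla u_\eps-e^{-v_\eps}u_\eps\nabla v_\eps$, this gives, for $t\in(0,T)$,
\begin{align*}
 \frac{1}{p}\frac{d}{dt}\io u_\eps^p
 =-(p-1)\io e^{-v_\eps}u_\eps^{p-2}\vert\nabla u_\eps\vert^2
 +(p-1)\io e^{-v_\eps}u_\eps^{p-1}\nabla u_\eps\cdot\nabla v_\eps
 +\eps\io(\kappa u_\eps^p-\mu u_\eps^{p+1}).
\end{align*}
The three terms are then handled separately.

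For the cross term we use Young's inequality in the weighted form
\begin{align*}
 e^{-v_\eps}u_\eps^{p-1}\nabla u_\eps\cdot\nabla v_\eps
 \leq\eta e^{-v_\eps}u_\eps^{p-2}\vert\nabla u_\eps\vert^2
 +\frac{1}{4\eta}e^{-v_\eps}u_\eps^{p}\vert\nabla v_\eps\vert^2 .
\end{align*}
Because $v_\eps\geq0$ (Lemma \ref{lokale Existenz}), we have $e^{-v_\eps}\leq1$ in the second summand, which yields the term $\frac{p-1}{4\eta}\io u_\eps^p\vert\nabla v_\eps\vert^2$. The first summand combines with the diffusion term to give $-(p-1)(1-\eta)\io e^{-v_\eps}u_\eps^{p-2}\vert\nabla u_\eps\vert^2$.

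The key point is the $\eps$-independent lower bound for $e^{-v_\eps}$. By Lemma \ref{Zeitabhängige obere Schranke für v}, $v_\eps\leq C e^{(1+\kappa)T}$ on $\Omega\times[0,T]$ with $C$ independent of $\eps$. Hence $e^{-v_\eps}\geq C(T):=\exp(-Ce^{(1+\kappa)T})$, which gives the first term of the claimed inequality.

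For the logistic term, Lemma \ref{Lemma Abschätzung Logistik} (part 1) gives $\kappa x^p-\mu x^{p+1}<\kappa(\kappa/\mu)^p=\kappa^{p+1}/\mu^p$ for all $x\geq0$. Multiplying by $\eps<1$ keeps the bound, since the maximum is positive. Integrating over $\Omega$ produces $\frac{\kappa^{p+1}}{\mu^p}\vert\Omega\vert$.

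The only delicate point is the case $p<2$, where $u_\eps^{p-2}$ may be singular at zeros of $u_\eps$. If needed, this is handled by testing with $(u_\eps+\delta)^{p-1}$ and letting $\delta\searrow0$, or by noting that the resulting integrals are understood as possibly infinite but the inequality remains valid. For $p=1$ the gradient terms vanish, and the identity reduces to the mass computation of Lemma \ref{Beschränktheit von u in L^1}.

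The lower bound on $e^{-v_\eps}$ is the main step ensuring uniformity in $\eps$. It is already supplied by Lemma \ref{Zeitabhängige obere Schranke für v}, so the proof is essentially routine.
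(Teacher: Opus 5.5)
Your proposal is correct and follows the paper's proof step by step. You test with $u_\eps^{p-1}$, apply Young's inequality with weight $\eta$ to the cross term and use $e^{-v_\eps}\le 1$ there, bound the dissipative term via the $\eps$-uniform estimate $e^{-v_\eps}\ge C(T)$ that comes from $v_\eps\le Ce^{(1+\kappa)t}$, and handle the logistic term with the pointwise bound $\kappa x^p-\mu x^{p+1}<\kappa^{p+1}/\mu^p$.

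Only your aside on $p<2$ needs correcting. Treating $\io u_\eps^{p-2}\vert\nabla u_\eps\vert^2$ as possibly infinite would not rescue the inequality, since the right-hand side would then be $-\infty$. The issue never arises, however: the strong maximum principle and Hopf's lemma give $u_\eps>0$ in $\overline{\Omega}\times(0,\infty)$ when $u_0\not\equiv 0$.
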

\begin{proof}
We test the parabolic equation in \eqref{Hauptsystem} by $u_\eps^{p-1}$ and integrate by parts, which yield
\begin{align*}
 \frac{1}{p}\frac{d}{dt}\io u_\eps^p=&-(p-1)\io u_\eps^{p-2}\nabla u_\eps\cdot\nabla(u_\eps e^{-v_\eps})
 +\eps\kappa\io u_\eps^p-\eps\mu\io u_\eps^{p+1}\\
 =&-(p-1)\io e^{-v_\eps}u_\eps^{p-2}\vert\nabla u_\eps\vert^2
 +(p-1)\io e^{-v_\eps}u_\eps^{p-1}\nabla u_\eps\cdot\nabla v_\eps
 +\eps\kappa\io u_\eps^p-\eps\mu\io u_\eps^{p+1}.
\end{align*}
For the second term we apply Young's inequality for some $\eta\in(0,1)$,
while we use Lemma \ref{Lemma Abschätzung Logistik} for the logistic term to obtain
\begin{align*}
 \frac{1}{p}\frac{d}{dt}\io u_\eps^p\leq-(1-\eta)(p-1)\io e^{-v_\eps}u_\eps^{p-2}\vert\nabla u_\eps\vert^2
 +\frac{p-1}{4\eta}\io e^{-v_\eps}u_\eps^p\vert\nabla v_\eps\vert^2
 +\frac{\kappa^{p+1}}{\mu^p}\vert\Omega\vert.
\end{align*}
Since $v_\eps\geq 0$ by Lemma \ref{lokale Existenz} in $\Omega\times(0,T)$,
it holds $e^{-v_\eps}\leq 1$ for all $\eps\in(0,1)$. Due to the local-in-time bound for $v_\eps$ from
Lemma \ref{Zeitabhängige obere Schranke für v} there is
$C(T)>0$ such that $e^{-v_\eps}\geq C(T)$ in $\Omega\times(0,T)$ for all $\eps\in(0,1)$. This implies
\begin{align*}
 \frac{1}{p}\frac{d}{dt}\io u_\eps^p\leq -C(T)(1-\eta)(p-1)\io u_\eps^{p-2}\vert\nabla u_\eps\vert^2+
 \frac{p-1}{4\eta}\io u_\eps^p\vert\nabla v_\eps\vert^2+\frac{\kappa^{p+1}}{\mu^p}\vert\Omega\vert.
\end{align*}

\end{proof}

We will prepare the derivation of a $L^p$-bound of $u_\eps$ for $p\in(1,2)$. If we take a look at the estimate in Lemma
\ref{Lp+1 Abschätzung für p in 1,2}, then it is necessary to control the term $\io u_\eps\ln(u_\eps)$.
Therefore, we show that $u_\eps$ is bounded in $L^2(\Omega\times(0,T))$.
The term $\io u_\eps^{p-2}\vert\nabla u_\eps\vert^2$ is a good term,
since we obtain it after integrating by parts in the first
equation after testing it by $u_{\eps}^{p-1}$.
\begin{lem}\label{Raum-Zeit-Beschränktheit von ueps in L2}
Suppose \eqref{Generalvoraussetzungen} and let $T\in(0,\infty)$.
Then there is $C(T)>0$ such that for all $\eps\in(0,1)$
the corresponding classical solution
$(u_\eps,v_\eps)$ of \eqref{Hauptsystem} satisfies
 \begin{align*}
  \int_0^t\io u_{\eps}^2\leq C(T)\quad\text{for all } t\in(0,T).
 \end{align*}
\end{lem}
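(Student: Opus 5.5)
The approach is an energy identity for $v_\eps$, obtained by turning the parabolic equation into an evolution equation for $v_\eps$. This is the mechanism behind Lemma \ref{Zeitabhängige obere Schranke für v}. Since $u_\eps=(I-\Delta)v_\eps$, applying $(I-\Delta)^{-1}$ to the first equation of \eqref{Hauptsystem} gives
\begin{align*}
 v_{\eps t}+e^{-v_\eps}u_\eps=(I-\Delta)^{-1}\big(e^{-v_\eps}u_\eps\big)+\eps\kappa v_\eps-\eps\mu(I-\Delta)^{-1}\big(u_\eps^2\big)
 \quad\text{in }\Omega\times(0,\infty).
\end{align*}
This rests on $\Delta(e^{-v_\eps}u_\eps)=-(I-\Delta)(e^{-v_\eps}u_\eps)+e^{-v_\eps}u_\eps$ together with the Neumann conditions. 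I would test this identity with $u_\eps$ and integrate over $\Omega$, treating the four terms as follows.

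\emph{Time derivative.} Using $u_\eps=-\Delta v_\eps+v_\eps$ and integrating by parts,
\begin{align*}
 \io v_{\eps t}u_\eps=\frac12\frac{d}{dt}\io\big(|\nabla v_\eps|^2+v_\eps^2\big)=:\frac12 y_\eps'(t).
\end{align*}

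\emph{Good term.} By Lemma \ref{Zeitabhängige obere Schranke für v} there is $C_1(T)>0$ with $e^{-v_\eps}\ge C_1(T)$ on $\Omega\times(0,T)$ for all $\eps\in(0,1)$. Hence $\io e^{-v_\eps}u_\eps^2\ge C_1(T)\io u_\eps^2$.

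\emph{First right-hand term.} By self-adjointness of $(I-\Delta)^{-1}$ and $e^{-v_\eps}\le 1$,
\begin{align*}
 \io u_\eps(I-\Delta)^{-1}\big(e^{-v_\eps}u_\eps\big)=\io e^{-v_\eps}u_\eps v_\eps\le\io u_\eps v_\eps=y_\eps(t).
\end{align*}

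\emph{Logistic terms.} The term $\eps\kappa\io u_\eps v_\eps$ is bounded by $\kappa y_\eps$. The term $-\eps\mu\io u_\eps(I-\Delta)^{-1}(u_\eps^2)$ is nonpositive, since $u_\eps\ge0$ and $(I-\Delta)^{-1}$ preserves nonnegativity by the maximum principle, so it can be dropped.

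Collecting these estimates yields
\begin{align*}
 \frac12 y_\eps'(t)+C_1(T)\io u_\eps^2\le(1+\kappa)y_\eps(t)\quad\text{for all }t\in(0,T).
\end{align*}
Dropping the nonnegative $L^2$ term, Gronwall's lemma gives $y_\eps(t)\le y(0)e^{2(1+\kappa)t}$, where $y(0)=\io(|\nabla v_0|^2+v_0^2)$. This quantity is independent of $\eps$ and finite, because $v_0\in W^{2,q}(\Omega)$ by Lemma \ref{elliptische Regularitätstheorie}. Integrating the differential inequality over $(0,t)$ then gives
\begin{align*}
 C_1(T)\int_0^t\io u_\eps^2\le\frac12 y(0)+(1+\kappa)T\,y(0)e^{2(1+\kappa)T},
\end{align*}
which is the claimed bound.

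The main technical point I expect is justifying $v_{\eps t}$ and the identity near $t=0$, since $u_\eps$ is only $C^{2,1}$ for $t>0$. I would handle this by performing the computation on $(\tau,t)$ for $\tau>0$. Then I would let $\tau\searrow0$, using that $v_\eps\in C^0([0,T];W^{1,2}(\Omega))$, which follows from $u_\eps\in C^0(\overline\Omega\times[0,T])$ and Lemma \ref{elliptische Regularitätstheorie}. This gives $y_\eps(\tau)\to y(0)$, and the remaining terms pass to the limit by monotone convergence.
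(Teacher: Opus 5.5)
Your proof is correct and is essentially the paper's argument. Testing the $v_\eps$-evolution equation with $u_\eps$ is, by self-adjointness of $(I-\Delta)^{-1}$, the same as the paper's testing of the first equation with $v_\eps$: it gives the identical energy identity for $\Vert v_\eps\Vert_{W^{1,2}(\Omega)}^2$ and uses the lower bound on $e^{-v_\eps}$ from Lemma \ref{Zeitabhängige obere Schranke für v} in the same way. The only difference is minor: the paper bounds $\io u_\eps v_\eps e^{-v_\eps}\le m/e$ and the logistic terms via Young's inequality and the mass bound, whereas you use $e^{-v_\eps}\le 1$ and Gronwall, which avoids the mass bound at the cost of a constant exponential in $T$.
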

\begin{proof}
 Testing the first equation of \eqref{Hauptsystem} with $v_{\eps}$
 and using the second equation for $u_{\eps t}$ yields
 \begin{align*}
  \io u_{\eps t}v_{\eps}=\io(-\Delta v_{\eps}+v_{\eps})_t v_{\eps}
  =\io\nabla v_{\eps t}\cdot\nabla v_{\eps}+\io v_{\eps t}v_{\eps}=\frac{1}{2}
  \frac{d}{dt}\io(v_{\eps}^2+\vert\nabla v_{\eps}\vert^2),
 \end{align*}
 in $(0,T)$ for each $\eps\in(0,1)$.
Integrating by parts twice and using $\Delta v_{\eps}=v_{\eps}-u_{\eps}$ then yields
\begin{align*}
 &\frac{1}{2}\frac{d}{dt}\io(v_{\eps}^2+\vert\nabla v_{\eps}\vert^2)=\io\Delta(u_{\eps} e^{-v_{\eps}})v_{\eps}
 +\eps\kappa\io u_{\eps} v_{\eps}-\eps\mu\io u_{\eps}^2v_{\eps}\\
 =&\io u_{\eps} e^{-v_{\eps}}\Delta v_{\eps}+\eps\kappa\io u_{\eps} v_{\eps}
 -\eps\mu\io u_{\eps}^2 v_{\eps}\\
 =&\io u_{\eps} e^{-v_{\eps}}(v_{\eps}-u_{\eps})
 +\eps\kappa\io u_{\eps} v_{\eps}-\eps\mu\io u_{\eps}^2v_{\eps}\\
 =&\io u_{\eps} v_{\eps} e^{-v_{\eps}}-\io e^{-v_{\eps}}u_{\eps}^2
 +\eps\kappa\io u_{\eps} v_{\eps}-\eps\mu\io u_{\eps}^2v_{\eps}\quad\text{in }(0,T),\:
 \text{for each }\eps\in(0,1).
\end{align*}
Since $v_{\eps}$ is nonnegative, it holds that $v_{\eps}(x,t)e^{-v_{\eps}(x,t)}\leq e^{-1}$
for all $(x,t)\in\Omega\times(0,T)$ and $\eps\in(0,1)$.
In combination with Lemma \ref{Beschränktheit von u in L^1}, we obtain
\begin{align*}
 \io u_{\eps} v_{\eps} e^{-v_{\eps}}\leq\frac{1}{e}\io u_{\eps}\leq\frac{m}{e}\quad\text{in }(0,T),
 \:\text{for each }\eps\in(0,1).
\end{align*}
By Young's inequality and Lemma \ref{u=v in L^1}, we have
\begin{align*}
 &\eps\kappa\io u_{\eps} v_{\eps}-\eps\mu\io u_{\eps}^2v_{\eps}
 \leq\frac{\mu}{\kappa}\eps\kappa\io u_{\eps}^2v_{\eps}
 +\frac{\kappa^2\eps}{4\mu}\io v_{\eps}
 -\eps\mu\io u_{\eps}^2 v_{\eps}\\
 =&\frac{\kappa^2\eps}{4\mu}\io v_{\eps}=\frac{\kappa^2\eps}{4\mu}\io u_{\eps}\leq\frac{\kappa^2\eps m}{4\mu}
 \leq \frac{\kappa^2 m}{4\mu},
\end{align*}
in $(0,T)$ for each $\eps\in(0,1)$. Combining these estimates yields
\begin{align*}
 \frac{1}{2}\Vert v_{\eps}(\cdot,t)\Vert_{W^{1,2}(\Omega)}^2+\int_0^t\io e^{-v_{\eps}}u_{\eps}^2
 \leq\Vert v_0\Vert_{W^{1,2}(\Omega)}^2+
 \left( \frac{m}{e}+\frac{\kappa^2m}{4\mu} \right)T,\quad\text{for all }t\in(0,T),\:\eps\in(0,1).
\end{align*}
Since $u_0\in L^2(\Omega)$, we find constants $C_1,C_2>0$ by Lemma \ref{elliptische Regularitätstheorie} such that
\begin{align*}
 \Vert v_0\Vert_{W^{1,2}(\Omega)}^2\leq C_1\Vert u_0\Vert_{L^2(\Omega)}^2\leq C_2.
\end{align*}
Since by Lemma \ref{Zeitabhängige obere Schranke für v},
$v_{\eps}$ is locally bounded in time, we find $C_3(T)>0$ such that
\begin{align*}
 \frac{1}{2}\Vert v_{\eps}(\cdot,t)\Vert_{W^{1,2}(\Omega)}^2+C_3(T)\int_0^t\io u_{\eps}^2
 \leq C_2+\left( \frac{m}{e}
 +\frac{\kappa^2m}{4\mu} \right),\quad\text{for all }t\in(0,T),\:\eps\in(0,1),
\end{align*}
which proves the statement.
\end{proof}

With that tool, we can show boundedness of $\io u_\eps(\cdot,t)\ln(u_\eps(\cdot,t))$ for all $t<T$.

\begin{lem}\label{uln(u) beschränkt in L^1}
Suppose \eqref{Generalvoraussetzungen} and let $T\in(0,\infty)$.
Then there is $C(T)>0$ such that for all $\eps\in(0,1)$ the corresponding
classical solution
$(u_\eps,v_\eps)$ of \eqref{Hauptsystem} satisfies
 \begin{align*}
  \sup_{t\in(0,T)}\io u_{\eps}(\cdot,t)\ln(u_{\eps}(\cdot,t))\leq C(T).
 \end{align*}
\end{lem}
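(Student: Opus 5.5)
Our plan is to differentiate $\io u_\eps\ln u_\eps$ in time and to close the estimate with the space-time $L^2$ bound from Lemma \ref{Raum-Zeit-Beschränktheit von ueps in L2}, the lower bound for $e^{-v_\eps}$ from Lemma \ref{Zeitabhängige obere Schranke für v}, and part 2 of Lemma \ref{Lemma Abschätzung Logistik} for the logistic part. We test the first equation of \eqref{Hauptsystem} with $\ln u_\eps+1$. Since $\io u_{\eps t}=\frac{d}{dt}\io u_\eps$ and the logistic term contributes through $(\ln u_\eps+1)$, we obtain
\begin{align*}
 \frac{d}{dt}\io u_\eps\ln u_\eps
 =-\io\frac{\nabla u_\eps}{u_\eps}\cdot\nabla(u_\eps e^{-v_\eps})
 +\eps\io(\kappa u_\eps-\mu u_\eps^2)\ln u_\eps+\eps\io(\kappa u_\eps-\mu u_\eps^2).
\end{align*}
To make this rigorous where $u_\eps$ might vanish, we will work with $\ln(u_\eps+\delta)$ and let $\delta\searrow0$; alternatively, we use positivity of $u_\eps$ for $t>0$ via the strong maximum principle.

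Next we expand the diffusion term. Using $\nabla(u_\eps e^{-v_\eps})=e^{-v_\eps}\nabla u_\eps-e^{-v_\eps}u_\eps\nabla v_\eps$, it becomes
\[
-\io e^{-v_\eps}\frac{|\nabla u_\eps|^2}{u_\eps}+\io e^{-v_\eps}\nabla u_\eps\cdot\nabla v_\eps .
\]
The key step is to treat the cross term without Young's inequality, since that would produce $u_\eps|\nabla v_\eps|^2$, which is not obviously controlled. Instead we write $e^{-v_\eps}\nabla u_\eps=\nabla(e^{-v_\eps}u_\eps)+e^{-v_\eps}u_\eps\nabla v_\eps$. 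Integrating by parts and using $-\Delta v_\eps=u_\eps-v_\eps$ gives
\[
\io e^{-v_\eps}\nabla u_\eps\cdot\nabla v_\eps=\io e^{-v_\eps}u_\eps(u_\eps-v_\eps)+\io e^{-v_\eps}u_\eps|\nabla v_\eps|^2 .
\]
The last integral is the delicate one. We avoid it by a cleaner integration by parts of the whole cross term directly. Set $\Phi(u,v)$ with $\nabla\Phi=e^{-v}\nabla u$ along the relevant structure; concretely, note that $e^{-v_\eps}\nabla u_\eps\cdot\nabla v_\eps=-\nabla(e^{-v_\eps})\cdot\nabla u_\eps$. Then
\[
\io e^{-v_\eps}\nabla u_\eps\cdot\nabla v_\eps=-\io\nabla u_\eps\cdot\nabla e^{-v_\eps}=\io u_\eps\Delta e^{-v_\eps}
=\io u_\eps e^{-v_\eps}\bigl(|\nabla v_\eps|^2-\Delta v_\eps\bigr).
\]
This again produces $|\nabla v_\eps|^2$, which is the main obstacle. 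We handle it as follows. First we drop the good term $-\io e^{-v_\eps}|\nabla u_\eps|^2/u_\eps$ and use Young's inequality in the form
\[
e^{-v_\eps}\nabla u_\eps\cdot\nabla v_\eps\le e^{-v_\eps}\frac{|\nabla u_\eps|^2}{u_\eps}+\tfrac14 e^{-v_\eps}u_\eps|\nabla v_\eps|^2,
\]
so that we only need to bound $\int_0^T\io u_\eps|\nabla v_\eps|^2$. By Hölder's inequality, this is at most
\[
\int_0^T\|u_\eps\|_{L^2}\|\nabla v_\eps\|_{L^4}^2 .
\]
By Lemma \ref{elliptische Regularitätstheorie} and the Sobolev embedding $W^{2,2}\hookrightarrow W^{1,4}$ in two dimensions, we have $\|\nabla v_\eps\|_{L^4}\le C\|u_\eps\|_{L^2}$. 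Hence the integral is bounded by $C\int_0^T\|u_\eps\|_{L^2}^3$, which is cubic. This is too much for Lemma \ref{Raum-Zeit-Beschränktheit von ueps in L2} alone.

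Therefore the better route is to use $\|\nabla v_\eps\|_{L^4}^2\le C\|\nabla v_\eps\|_{L^2}\|v_\eps\|_{W^{2,2}}$ by Gagliardo--Nirenberg, combined with the uniform bound $\sup_{t<T}\|v_\eps\|_{W^{1,2}}\le C(T)$ obtained in the proof of Lemma \ref{Raum-Zeit-Beschränktheit von ueps in L2}. This yields
\[
\io u_\eps|\nabla v_\eps|^2\le C(T)\|u_\eps\|_{L^2}^2 .
\]
Its time integral is then bounded by Lemma \ref{Raum-Zeit-Beschränktheit von ueps in L2}. The term $\io e^{-v_\eps}u_\eps(u_\eps-v_\eps)$, if it arises, is likewise bounded by $\io u_\eps^2$ since $e^{-v_\eps}\le1$ and $v_\eps\ge0$.

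For the remaining terms, part 2 of Lemma \ref{Lemma Abschätzung Logistik} bounds $\eps\io(\kappa u_\eps-\mu u_\eps^2)\ln u_\eps$ by $C|\Omega|$. Part 1 with $p=1$ bounds $\eps\io(\kappa u_\eps-\mu u_\eps^2)$ by $\kappa^2|\Omega|/\mu$. Integrating over $(0,t)$ with $t<T$ then gives
\[
\io u_\eps\ln u_\eps(\cdot,t)\le\io u_0\ln u_0+C(T)\int_0^T\io u_\eps^2+C T ,
\]
which is bounded uniformly in $\eps$. The initial term is finite because $u_0\in W^{1,q}\subset L^\infty$. We expect the main obstacle to be the control of $\io u_\eps|\nabla v_\eps|^2$; the Gagliardo--Nirenberg step combined with the uniform $W^{1,2}$ bound for $v_\eps$ is what we try first.
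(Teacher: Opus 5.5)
Your argument is correct and has the same skeleton as the paper's proof: you test with $\ln u_\eps$, absorb the cross term by Young's inequality into $\io e^{-v_\eps}u_\eps|\nabla v_\eps|^2$, reduce that term via elliptic regularity to $\int_0^T\io u_\eps^2$, and handle the logistic parts with Lemma \ref{Lemma Abschätzung Logistik}. The only difference is the interpolation step: the paper splits $u_\eps|\nabla v_\eps|^2\le\frac12u_\eps^2+\frac12|\nabla v_\eps|^4$ and interpolates $\Vert\nabla v_\eps\Vert_{L^4(\Omega)}$ between $\Vert v_\eps\Vert_{W^{2,2}(\Omega)}$ and $\Vert v_\eps\Vert_{L^\infty(\Omega)}$, using Lemma \ref{Zeitabhängige obere Schranke für v}, whereas you use H\"older plus the Ladyzhenskaya inequality with the energy bound $\sup_{t<T}\Vert v_\eps(\cdot,t)\Vert_{W^{1,2}(\Omega)}\le C(T)$ from the proof of Lemma \ref{Raum-Zeit-Beschränktheit von ueps in L2}, which is equally valid (and your exploratory integration-by-parts detours can simply be deleted).
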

\begin{proof}
 We test the first equation by $\ln(u_\eps)$ and compute
 \begin{align*}
  &\frac{d}{dt}\io u_{\eps}\ln(u_{\eps})=\io u_{\eps t}\ln(u_{\eps})+\io u_{\eps t}\\
  =&-\io e^{-v_{\eps}}\frac{\vert\nabla u_{\eps}\vert^2}{u_{\eps}}
  +\io e^{-v_{\eps}}\nabla u_{\eps}\cdot\nabla v_{\eps}
  +\eps\io(\kappa u_{\eps}\ln(u_{\eps})-\mu u_{\eps}^2\ln(u_{\eps}))
  +\eps\io(\kappa u_{\eps}-\mu u_{\eps}^2),
 \end{align*}
 in $(0,T)$ for each $\eps\in(0,1)$.
Now we can apply Lemma \ref{Lemma Abschätzung Logistik} for $p=1$ to find $C_1>0$ such that
\begin{align*}
 &\frac{d}{dt}\io u_{\eps}\ln(u_{\eps})\leq-\io e^{-v_{\eps}}\frac{\vert\nabla u_{\eps}\vert^2}{u_{\eps}}
 +\io e^{-v_{\eps}}\nabla u_{\eps}\cdot\nabla v_{\eps}
 +\eps C_1\vert\Omega\vert+\eps\frac{\kappa^2}{\mu}\vert\Omega\vert\\
 \leq&-\io e^{-v_{\eps}}\frac{\vert\nabla u_{\eps}\vert^2}{u_{\eps}}
 +\io e^{-v_{\eps}}\nabla u_{\eps}\cdot\nabla v_{\eps}
 + C_1\vert\Omega\vert+\frac{\kappa^2}{\mu}\vert\Omega\vert\quad\text{in }(0,T)\:\text{for each }\eps\in(0,1).
\end{align*}
We apply Young's inequality twice to estimate the second integral
\begin{align*}
 &\io e^{-v_{\eps}}\nabla u_{\eps}\cdot\nabla v_{\eps}=\io e^{-v_{\eps}}
 \frac{u_{\eps}}{u_{\eps}}\nabla u_{\eps}\cdot\nabla v_{\eps}
 \leq\frac{1}{2}\io e^{-v_{\eps}}\frac{\vert\nabla u_{\eps}\vert^2}{u_{\eps}}
 +\frac{1}{2}\io e^{-v_{\eps}}u_{\eps}\vert\nabla v_{\eps}\vert^2\\
 \leq&\frac{1}{2}\io e^{-v_{\eps}}\frac{\vert\nabla u_{\eps}\vert^2}{u_{\eps}}
 +\frac{1}{4}\io e^{-v_{\eps}}u_{\eps}^2
 +\frac{1}{4}\io e^{-v_{\eps}}
 \vert\nabla v_{\eps}\vert^4\quad\text{in }(0,T)\:\text{for each }\eps\in(0,1).
\end{align*}
By Gagliardo-Nirenberg's inequality and the estimate
\begin{align*}
 \Vert D^2 v_{\eps}\Vert_{L^2(\Omega)}\leq\Vert v_{\eps}\Vert_{W^{2,2}(\Omega)}
\end{align*}
we find $C_2,C_3>0$ such that
\begin{align*}
 \Vert\nabla v_{\eps}\Vert_{L^4(\Omega)}
 \leq C_2\Vert v_{\eps}\Vert_{W^{2,2}}^{1/2}\Vert v_{\eps}\Vert_{L^\infty(\Omega)}^{1/2}+C_3
 \Vert v_{\eps}\Vert_{L^\infty(\Omega)}\quad\text{in }(0,T)\:\text{for each }\eps\in(0,1).
\end{align*}
Then Young's inequality and Lemma \ref{Zeitabhängige obere Schranke für v} imply the existence of $C_4(T),C_5(T)>0$
such that
\begin{align*}
 \Vert\nabla v_{\eps}\Vert_{L^4(\Omega)}
 \leq 8C_2^4\Vert v_{\eps}\Vert_{W^{2,2}(\Omega)}^2\Vert v_{\eps}\Vert_{L^\infty(\Omega)}^2
 +8C_3^4\Vert v_{\eps}\Vert_{L^\infty(\Omega)}^4
 \leq C_4(T)\Vert v_{\eps}\Vert_{W^{2,2}(\Omega)}^2+C_5(T),
\end{align*}
in $(0,T)$ for each $\eps\in(0,1)$.
Lemma \ref{elliptische Regularitätstheorie} implies the existence
of a constant $C_6(T)>0$
such that
\begin{align*}
 C_4(T)\Vert v_{\eps}\Vert_{W^{2,2}(\Omega)}^2+C_5(T)\leq C_6(T)\Vert u_{\eps}\Vert_{L^2(\Omega)}^2+C_5(T),
\end{align*}
in $(0,T)$ for each $\eps\in(0,1)$.
Hence,
\begin{align*}
 \io u_{\eps}(\cdot,t)\ln(u_{\eps}(\cdot,t))\leq&\io u_0\ln(u_0)+\frac{1}{4}\int_0^t\io e^{-v_{\eps}}u_{\eps}^2
 +\frac{C_6(T)}{4}\int_0^t
 \io u_{\eps}^2+C_7(T)\\
 \leq&C_8+\frac{C_6(T)+1}{4}\int_0^t\io u_{\eps}^2+C_7(T)\quad\text{for all }t\in(0,T),\:\eps\in(0,1),
\end{align*}
where $C_7(T):=\frac{C_6(T)\cdot C_5(T)}{4}+C_1\vert\Omega\vert+\frac{\kappa^2}{\mu}\vert\Omega\vert$.
The constant $C_8>0$ can be obtained due to $u_0\in L^\infty(\Omega)$.
Due to Lemma \ref{Raum-Zeit-Beschränktheit von ueps in L2},
\begin{align*}
 \io u_{\eps}(\cdot,t)\ln(u_{\eps}(\cdot,t))\leq C_9(T),
\end{align*}
for all $t\in(0,T)$ and $\eps\in(0,1)$, which proves the statement.
\end{proof}

With the previous Lemma we can now show that $u_\eps$ has a local-in-time bound
in $L^p(\Omega)$ for $p\in(1,2)$.

\begin{lem}\label{Beschränktheit von ueps in Lp für p in 1,2}
Suppose \eqref{Generalvoraussetzungen}.
 Let $p\in(1,2)$ and $T\in(0,\infty)$. Then there is $C(p,T)>0$ such that for all $\eps\in(0,1)$
 the corresponding classical solution $(u_\eps,v_\eps)$ of \eqref{Hauptsystem} satisfies
 \begin{align*}
  \Vert u_{\eps}(\cdot,t)\Vert_{L^p(\Omega)}\leq C(p,T)\quad\text{for all } t\in(0,T).
 \end{align*}
\end{lem}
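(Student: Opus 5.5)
We plan to combine the ODI from Lemma \ref{ODI Lp Norm} with the interpolation estimate from Lemma \ref{Lp+1 Abschätzung für p in 1,2}. The entropy factor in the latter is controlled uniformly in $\eps$ by Lemma \ref{uln(u) beschränkt in L^1}. Fix $p\in(1,2)$ and $T>0$, and take $\eta=\frac12$ in Lemma \ref{ODI Lp Norm}. This gives
\begin{align*}
 \frac1p\frac{d}{dt}\io u_\eps^p\leq -\frac{C_1(T)(p-1)}{2}\io u_\eps^{p-2}\vert\nabla u_\eps\vert^2+\frac{p-1}{2}\io u_\eps^p\vert\nabla v_\eps\vert^2+\frac{\kappa^{p+1}}{\mu^p}\vert\Omega\vert
\end{align*}
on $(0,T)$. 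The task is to absorb the cross term into the dissipation.

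For the cross term we use Hölder's inequality with exponents $\frac{p+1}{p}$ and $p+1$:
\begin{align*}
 \io u_\eps^p\vert\nabla v_\eps\vert^2\leq\Vert u_\eps\Vert_{L^{p+1}(\Omega)}^{p}\,\Vert\nabla v_\eps\Vert_{L^{2(p+1)}(\Omega)}^{2}.
\end{align*}
By Lemma \ref{elliptische Regularitätstheorie} and the embedding $W^{2,p+1}(\Omega)\hookrightarrow W^{1,2(p+1)}(\Omega)$ (valid in two dimensions), we get $\Vert\nabla v_\eps\Vert_{L^{2(p+1)}}\leq C\Vert u_\eps\Vert_{L^{p+1}}$. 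Hence the cross term is bounded by $C\io u_\eps^{p+1}$. Now apply Lemma \ref{Lp+1 Abschätzung für p in 1,2}. The factor $\io(u_\eps\ln u_\eps+\frac1e)$ is bounded by some $K(T)$, uniformly in $\eps$ and $t\in(0,T)$, by Lemma \ref{uln(u) beschränkt in L^1}. The total mass in the last term should be $\Vert u_\eps(\cdot,t)\Vert_{L^1}\le m$, by Lemma \ref{Beschränktheit von u in L^1}, rather than $\Vert u_0\Vert_{L^1}$. This yields
\begin{align*}
 \io u_\eps^{p+1}\leq\frac{C(p+1)^2K(T)}{\ln s}\io u_\eps^{p-2}\vert\nabla u_\eps\vert^2+6s^{p+1}\vert\Omega\vert+C'.
\end{align*}
Choosing $s>1$ large (depending only on $p$ and $T$) makes the coefficient of the gradient term after multiplication by $\frac{(p-1)C}{2}$ at most $\frac{C_1(T)(p-1)}{4}$. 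We then obtain
\begin{align*}
 \frac1p\frac{d}{dt}\io u_\eps^p\leq-\frac{C_1(T)(p-1)}{4}\io u_\eps^{p-2}\vert\nabla u_\eps\vert^2+C_2(p,T).
\end{align*}

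We drop the negative term and integrate over $(0,t)$ with $t<T$. This gives $\io u_\eps^p(\cdot,t)\le\io u_0^p+pC_2(p,T)T$, and $\io u_0^p$ is finite since $u_0\in L^\infty(\Omega)$. This is the claimed bound. Since we only need a bound on finite time intervals, no extra zeroth-order term is required.

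The main obstacle is the absorption step. The constant from the elliptic estimate, and the constant $C$ in Lemma \ref{Lp+1 Abschätzung für p in 1,2}, must not depend on $\eps$. It is equally essential that the entropy bound is uniform in $\eps$, which is exactly why Lemma \ref{uln(u) beschränkt in L^1} was established. We also have to check that the lemma from \cite{FujieSenba2016} applies with the current mass bound $m$ in place of the initial mass. If that adaptation is problematic, we would instead rederive the estimate directly, using the bounds $\io u_\eps\le m$ and $\io u_\eps\ln u_\eps\le C(T)$.
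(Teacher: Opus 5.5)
Your overall plan matches the paper's: the ODI of Lemma \ref{ODI Lp Norm} with $\eta=\frac12$, Hölder on the cross term, then Lemma \ref{Lp+1 Abschätzung für p in 1,2} with the entropy bound and a large $s$. However, your cross-term estimate is false, and the error breaks the absorption step. Hölder's inequality together with $\Vert\nabla v_\eps\Vert_{L^{2(p+1)}(\Omega)}\le C\Vert v_\eps\Vert_{W^{2,p+1}(\Omega)}\le C\Vert u_\eps\Vert_{L^{p+1}(\Omega)}$ actually gives
\begin{align*}
 \io u_\eps^p\vert\nabla v_\eps\vert^2\le C\Vert u_\eps\Vert_{L^{p+1}(\Omega)}^{p+2}
 =C\left(\io u_\eps^{p+1}\right)^{\frac{p+2}{p+1}},
\end{align*}
not $C\io u_\eps^{p+1}$. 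Homogeneity forces this: $v_\eps$ depends linearly on $u_\eps$, so the cross term has degree $p+2$. Inserting Lemma \ref{Lp+1 Abschätzung für p in 1,2} then yields a bound of the form $\bigl(\frac{a}{\ln s}D+bs^{p+1}+c\bigr)^{\frac{p+2}{p+1}}$ with $D=\io u_\eps^{p-2}\vert\nabla u_\eps\vert^2$. This is superlinear in $D$, so the linear dissipation $-\frac{C_1(T)(p-1)}{2}D$ cannot absorb it for any fixed $s$. Letting $s$ grow with $D$ does not help either, because the term $bs^{p+1}$ then becomes too large.

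The missing ingredient is one factor of the mass bound from Lemma \ref{Beschränktheit von u in L^1}, which lowers the degree by one. Your embedding $W^{2,p+1}\hookrightarrow W^{1,2(p+1)}$ is far from sharp in two dimensions. Use instead the sharp embedding $W^{2,\frac{2(p+1)}{p+2}}(\Omega)\hookrightarrow W^{1,2(p+1)}(\Omega)$, Lemma \ref{elliptische Regularitätstheorie}, and the interpolation $\Vert u_\eps\Vert_{L^{\frac{2(p+1)}{p+2}}}\le\Vert u_\eps\Vert_{L^1}^{1/2}\Vert u_\eps\Vert_{L^{p+1}}^{1/2}$. These give
\begin{align*}
 \Vert\nabla v_\eps\Vert_{L^{2(p+1)}(\Omega)}^2
 &\le C\Vert u_\eps\Vert_{L^{\frac{2(p+1)}{p+2}}(\Omega)}^2
 \le Cm\Vert u_\eps\Vert_{L^{p+1}(\Omega)},\\
 \io u_\eps^p\vert\nabla v_\eps\vert^2
 &\le\Vert u_\eps\Vert_{L^{p+1}(\Omega)}^{p}\Vert\nabla v_\eps\Vert_{L^{2(p+1)}(\Omega)}^2
 \le Cm\io u_\eps^{p+1}.
\end{align*}
This is what the paper does: it applies Young's inequality first and then the same embedding and interpolation, arriving at the factor $\frac{p+C_3m^{p+1}}{p+1}$ in front of $\io u_\eps^{p+1}$. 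With this replacement, the rest of your argument goes through as written. Your remark that the mass entering Lemma \ref{Lp+1 Abschätzung für p in 1,2} should be $\Vert u_\eps(\cdot,t)\Vert_{L^1(\Omega)}\le m$ rather than $\Vert u_0\Vert_{L^1(\Omega)}$ is correct and costs nothing.
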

\begin{proof}
 We test the first equation in \eqref{Hauptsystem} with $u_{\eps}^{p-1}$,
 integrate over $\Omega$, integrate by parts and apply Lemma \ref{ODI Lp Norm} with $\eta=\frac{1}{2}$
 to obtain a constant $C_1(T)>0$ such that
 \begin{align*}
  &\frac{1}{p}\frac{d}{dt}\io u_{\eps}^p\leq
  -\frac{C_1(T)(p-1)}{2}\io u_\eps^{p-2}\vert\nabla u_\eps\vert^2
  +\frac{p-1}{2}\io u_\eps^p\vert\nabla v_\eps\vert^2+\frac{\kappa^{p+1}}{\mu^p}\vert\Omega\vert,
 \end{align*}
 in $(0,T)$ for each $\eps\in(0,1)$.
By Hölder's and Young's inequality we obtain
\begin{align*}
 \io u_{\eps}^{p+1}\vert\nabla v_{\eps}\vert^2\leq\left( \io u_{\eps}^{p+1} \right)^{\frac{p}{p+1}}
 \left( \io \vert\nabla v_{\eps}\vert^{2(p+1)} \right)^\frac{1}{p+1}
 \leq\frac{p}{p+1}\io u_{\eps}^{p+1}+\frac{1}{p+1}\io \vert\nabla v_{\eps}\vert^{2(p+1)},
\end{align*}
in $(0,T)$ for each $\eps\in(0,1)$.
The Sobolev inequality and Lemma \ref{elliptische Regularitätstheorie}
yield the existence of $C_2,C_3>0$ such that
\begin{align*}
 \Vert\nabla v_{\eps}\Vert_{L^{2(p+1)}(\Omega)}\leq C_2\Vert v_{\eps}\Vert_{W^{2,\frac{2(p+1)}{p+2}}(\Omega)}\leq
 C_3\Vert u_{\eps}\Vert_{L^\frac{2(p+1)}{p+2}(\Omega)}\quad\text{in }(0,T)\:\text{for each }\eps\in(0,1).
\end{align*}
An interpolation estimate \cite[Theorem 4.6, Remark 2]{brezis2011functional}
and Lemma \ref{Beschränktheit von u in L^1} imply
\begin{align*}
 C_3\Vert u_{\eps}\Vert_{L^\frac{2(p+1)}{p+2}(\Omega)}^{2(p+1)}
 \leq C_3\Vert u_{\eps}\Vert_{L^1(\Omega)}^{p+1}\io u_{\eps}^{p+1}\leq
 C_3m^{p+1}\io u_{\eps}^{p+1}\quad\text{in }(0,T)\:\text{for each }\eps\in(0,1).
\end{align*}
 By Lemma \ref{Lp+1 Abschätzung für p in 1,2} there is $C_4>0$ such that
\begin{align*}
 \frac{p+C_3m^{p+1}}{p+1}\io u_{\eps}^{p+1}\leq&\frac{p+C_3m^{p+1}}{p+1}\cdot(\frac{C_4(p+1)^2}{\ln(s)}
 \io\left( u_{\eps}\ln(u_{\eps})+\frac{1}{e} \right)\io u_{\eps}^{p-2}\vert\nabla u_{\eps}\vert^2\\
 &+6s^{p+1}\vert\Omega\vert+4C_3^2\vert\Omega\vert^{2-p}\Vert u_0\Vert_{L^1(\Omega)}^{p+1})
 \quad\text{in }(0,T)\:\text{for each }\eps\in(0,1).
\end{align*}
By Lemma \ref{uln(u) beschränkt in L^1} there is $C_5(T)>0$ such that for all $s>1$
\begin{align*}
 \frac{p+C_2m^{p+1}}{p+1}\io u_{\eps}^{p+1}\leq&\frac{p+C_3m^{p+1}}{p+1}\cdot(\frac{C_4(p+1)^2C_5(T)}{\ln(s)}
 \io u_{\eps}^{p-2}\vert\nabla u_{\eps}\vert^2\\
 &+6s^{p+1}\vert\Omega\vert+4C_3^2\vert\Omega\vert^{2-p}\Vert u_0\Vert_{L^1(\Omega)}^{p+1})
 \quad\text{in }(0,T)\:\text{for each }\eps\in(0,1).
\end{align*}
We choose $s$ large enough such that
\begin{align*}
 C_6(T):=-\frac{C_1(T)(p-1)}{2}+\frac{p+C_3m^{p+1}}{p+1}\cdot\frac{C_4(p+1)^2C_5(T)}{\ln(s)}\leq 0.
\end{align*}
This implies the existence of $C_7>0$ such that
\begin{align*}
 \frac{1}{p}\frac{d}{dt}\io u_{\eps}^p\leq C_7+\frac{\kappa^2}{\mu}\vert\Omega\vert\quad\text{in }
 (0,T)\:\text{for each }\eps\in(0,1),
\end{align*}
which proves the statement.
\end{proof}
To derive a $L^\infty$-bound for $u_\eps$ we will perform a Moser iteration.
For that our aim is to find a local-in-time bound
for $\Vert\nabla v_\eps\Vert_{L^\infty(\Omega)}$. Due to the embedding
$W^{2,q}(\Omega)\hookrightarrow W^{1,\infty}(\Omega)$ since $q>2$
and Lemma \ref{elliptische Regularitätstheorie},
it is sufficient to show that $u_\eps$ is local-in-time bounded in $L^3(\Omega)$.

\begin{lem}\label{L3 Schranke für u}
Suppose \eqref{Generalvoraussetzungen} and let $T\in(0,\infty)$. Then there is
$C(T)>0$ such that for all $\eps\in(0,1)$ the corresponding classical solution $(u_\eps,v_\eps)$ of
 \eqref{Hauptsystem} satisfies
 \begin{align*}
  \sup_{t\in(0,T)}\Vert u_{\eps}(\cdot,t)\Vert_{L^3(\Omega)}\leq C(T).
 \end{align*}
\end{lem}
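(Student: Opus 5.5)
I will derive an ODI for $\io u_\eps^3$ from Lemma \ref{ODI Lp Norm} with $p=3$ and $\eta=\frac12$, and close it with the $\eps$-independent $L^p$-bound of Lemma \ref{Beschränktheit von ueps in Lp für p in 1,2} for some fixed $p_0\in(1,2)$ close to $2$. Lemma \ref{ODI Lp Norm} gives, with $C_1(T)>0$ independent of $\eps$,
\begin{align*}
 \frac13\frac{d}{dt}\io u_\eps^3\leq -C_1(T)\io u_\eps\vert\nabla u_\eps\vert^2+\io u_\eps^3\vert\nabla v_\eps\vert^2+\frac{\kappa^4}{\mu^3}\vert\Omega\vert
\end{align*}
in $(0,T)$. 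The dissipative term equals $\frac{4}{9}C_1(T)\io\vert\nabla u_\eps^{3/2}\vert^2$, so I set $w_\eps:=u_\eps^{3/2}$ and aim to control the cross term by this gradient.

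For the cross term I use Hölder's inequality:
\begin{align*}
 \io u_\eps^3\vert\nabla v_\eps\vert^2\leq\Vert u_\eps\Vert_{L^{4}(\Omega)}^{3}\Vert\nabla v_\eps\Vert_{L^{8}(\Omega)}^{2}.
\end{align*}
By Lemma \ref{elliptische Regularitätstheorie} and the Sobolev embedding $W^{2,8/5}(\Omega)\hookrightarrow W^{1,8}(\Omega)$ in two dimensions, $\Vert\nabla v_\eps\Vert_{L^8}\leq C\Vert u_\eps\Vert_{L^{8/5}}$, and $8/5<2$. Hence $\Vert\nabla v_\eps\Vert_{L^8}\leq C(T)$ uniformly in $\eps$ by Lemma \ref{Beschränktheit von ueps in Lp für p in 1,2} with $p_0=8/5$. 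It then remains to bound $\Vert u_\eps\Vert_{L^4}^3=\Vert w_\eps\Vert_{L^{8/3}}^2$. By the Gagliardo--Nirenberg inequality in 2D, with $w_\eps$ controlled in $L^{2/3}$ via $\Vert w_\eps\Vert_{L^{2/3}}=\Vert u_\eps\Vert_{L^1}^{3/2}\leq m^{3/2}$ (Lemma \ref{Beschränktheit von u in L^1}),
\begin{align*}
 \Vert w_\eps\Vert_{L^{8/3}}^2\leq C\Vert\nabla w_\eps\Vert_{L^2}^{2a}\Vert w_\eps\Vert_{L^{2/3}}^{2(1-a)}+C\Vert w_\eps\Vert_{L^{2/3}}^2 .
\end{align*}
The exponent satisfies $\frac38=a(\frac12-\frac12)+(1-a)\frac32$, so $a=\frac34<1$. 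Young's inequality then absorbs the gradient part into $\frac{2}{9}C_1(T)\io\vert\nabla w_\eps\vert^2$, leaving an $\eps$-independent constant $C(T)$.

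To close the argument I need a term $-\io u_\eps^3$ on the right side, so that the result is not just linear growth, although linear growth would also do on $(0,T)$. One more Gagliardo--Nirenberg step gives $\io w_\eps^2\leq \delta\io\vert\nabla w_\eps\vert^2+C_\delta m^3$. So
\begin{align*}
 \frac{d}{dt}\io u_\eps^3\leq C(T)
\end{align*}
on $(0,T)$, and integrating gives $\io u_\eps^3(\cdot,t)\leq\io u_0^3+C(T)T$, which is finite since $u_0\in W^{1,q}\hookrightarrow L^\infty$.

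The main obstacle is to make sure every constant is $\eps$-independent. The logistic term is handled uniformly in $\eps$ by Lemma \ref{Lemma Abschätzung Logistik}, which is already built into Lemma \ref{ODI Lp Norm}. The lower bound $e^{-v_\eps}\geq C(T)$ comes from Lemma \ref{Zeitabhängige obere Schranke für v}. Only the GN exponent bookkeeping needs care, namely that the power of $\Vert\nabla w_\eps\Vert_{L^2}$ stays strictly below $2$; this holds since $2a=\frac32$.
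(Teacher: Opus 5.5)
Your proof is correct, and its skeleton is the paper's: Lemma \ref{ODI Lp Norm} with $p=3$, $\eta=\frac12$, the substitution $w_\eps=u_\eps^{3/2}$, a Gagliardo--Nirenberg estimate absorbed by Young into $\frac49C_1(T)\io\vert\nabla w_\eps\vert^2$, and integration in time. The genuine difference is how you treat the cross term $\io u_\eps^3\vert\nabla v_\eps\vert^2$.

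The paper splits it by Young into $\frac34\io u_\eps^4+\frac14\io\vert\nabla v_\eps\vert^8$. It then bounds $\Vert\nabla v_\eps\Vert_{L^8(\Omega)}^8\le C_2(T)\Vert u_\eps\Vert_{L^4(\Omega)}^4+C_3(T)$ by interpolating between $\Vert v_\eps\Vert_{W^{2,4}(\Omega)}$ and $\Vert v_\eps\Vert_{L^\infty(\Omega)}$, where the latter comes from the pointwise bound of Lemma \ref{Zeitabhängige obere Schranke für v}. So $\nabla v_\eps$ is treated as comparable to $u_\eps$, and everything reduces to $\io u_\eps^4$. That term is closed by Gagliardo--Nirenberg for $u_\eps^{3/2}$ relative to $\Vert u_\eps^{3/2}\Vert_{L^1(\Omega)}=\Vert u_\eps\Vert_{L^{3/2}(\Omega)}^{3/2}$, with gradient power $\frac53$.

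You instead use Hölder and bound $\Vert\nabla v_\eps\Vert_{L^8(\Omega)}$ outright, uniformly in $\eps$, from the $L^{8/5}$-bound via $W^{2,8/5}(\Omega)\hookrightarrow W^{1,8}(\Omega)$. Thus $\nabla v_\eps$ becomes a fixed bounded coefficient, the pointwise bound on $v_\eps$ is not needed for the cross term, and the gradient power to absorb is only $\frac32$. The paper's route relies on classical interpolation together with the $L^\infty$-bound on $v_\eps$ it already has; yours trades that for a Sobolev embedding at a well-chosen exponent and is slightly more economical.

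Two small remarks. First, the Gagliardo--Nirenberg inequality with the quasi-norm $\Vert w_\eps\Vert_{L^{2/3}(\Omega)}$ lies outside the classical range $r\ge1$. The extension does hold, but you can avoid it by using $\Vert w_\eps\Vert_{L^1(\Omega)}=\Vert u_\eps\Vert_{L^{3/2}(\Omega)}^{3/2}$, which Lemma \ref{Beschränktheit von ueps in Lp für p in 1,2} also bounds; this gives gradient power $\frac54$. Second, your paragraph about needing a term $-\io u_\eps^3$ is superfluous: after absorption, $\frac{d}{dt}\io u_\eps^3\le C(T)$ already closes the argument on $(0,T)$, as you note yourself.
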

\begin{proof}
 We the parabolic equation in \eqref{Hauptsystem} by $u_{\eps}^2$ and apply Lemma \ref{ODI Lp Norm}
 with $\eta_1=\frac{1}{2}$ to obtain a constant $C_1(T)>0$ such that
 \begin{align*}
  \frac{1}{3}\frac{d}{dt}\io u_\eps^3\leq-C_1(T)\io u_\eps\vert\nabla u_\eps\vert^2
  +\io u_\eps^3\vert\nabla v_\eps\vert^2+\frac{\kappa^4}{\mu^3}\vert\Omega\vert.
 \end{align*}
We use the identity
$u_\eps\vert\nabla u_\eps\vert^2=\frac{4}{9}\vert\nabla u_\eps^\frac{3}{2}\vert^2$ and apply Young's
inequality to the second integral, which yield
 \begin{align*}
  \frac{1}{3}\frac{d}{dt}\io u_{\eps}^3\leq
  -\frac{4C_1(T)}{9}\io\vert\nabla u_{\eps}^\frac{3}{2}\vert^2
  +\frac{3}{4}\io u_{\eps}^4+\frac{1}{4}\io\vert\nabla v_{\eps}\vert^8+\frac{\kappa^4}{\mu^3}\vert\Omega\vert
  \quad\text{in }(0,T)\:\text{for each }\eps\in(0,1).
 \end{align*}
By the Gagliardo-Nirenberg inequality
and Lemma \ref{Zeitabhängige obere Schranke für v} we find constants $C_2(T),C_3(T)>0$ such that
\begin{align*}
 \Vert\nabla v_{\eps}\Vert_{L^8(\Omega)}^8\leq
 C_2(T)\Vert u_{\eps}\Vert_{L^4(\Omega)}^4+C_3(T)\quad\text{in }(0,T)\:\text{for each }\eps\in(0,1).
\end{align*}
By the Gagliardo-Nirenberg inequality there are $C_4,C_5>0$ and by Lemma
\ref{Beschränktheit von ueps in Lp für p in 1,2} we find constants $C_6(T),C_7(T)>0$ such that
\begin{align*}
 \io u_{\eps}^4=\Vert u_{\eps}^\frac{3}{2}(\cdot,t)\Vert_{L^\frac{8}{3}(\Omega)}^\frac{8}{3}
 \leq&\left( C_4\Vert u_{\eps}^\frac{3}{2}\Vert_{L^1(\Omega)}^\frac{3}{8}
 \Vert\nabla u_{\eps}^\frac{3}{2}\Vert_{L^2(\Omega)}^\frac{5}{8}
 +C_5\Vert u_{\eps}^\frac{3}{2}\Vert_{L^1(\Omega)} \right)^\frac{8}{3}\\
 \leq&C_6(T)\Vert\nabla u_{\eps}^\frac{5}{3}\Vert_{L^2(\Omega)}+C_7(T)\\
 \leq&\eta_2\Vert\nabla u_{\eps}^\frac{3}{2}\Vert_{L^2(\Omega)}^2
 +\frac{1}{6}\cdot\left( \frac{6\eta_2}{5} \right)^{-5}
 \cdot(C_6(T))^6+C_7(T)\quad\text{in }(0,T)\:\text{for each }\eps\in(0,1),
\end{align*}
where we set $\eta_2=\frac{4}{C_2(T)+3}\cdot\frac{4C_1(T)}{9}$ to find $C_8(T)>0$ such that
\begin{align*}
 \frac{1}{3}\frac{d}{dt}\io u_{\eps}^3(\cdot,t)\leq C_8(T)\quad\text{for all }t\in(0,T),\:\eps\in(0,1).
\end{align*}
\end{proof}

Now we can do a Moser iteration to show that $u_\eps$ is locally-in-time bounded.
We follow the strategy and notation from \cite[Lemma 2.11]{FuestLankeitMizukami2021}.

\begin{lem}\label{Linfty Schranke für u}
Suppose \eqref{Generalvoraussetzungen} and let $T\in(0,\infty)$. Moreover, let $L>0$ such that
 $\Vert u_0\Vert_{L^\infty(\Omega)}\leq L$.
 Then there is $C(T)>0$ such that for all $\eps\in(0,1)$ the corresponding classical solution
 $(u_\eps,v_\eps)$ of \eqref{Hauptsystem} satisfies
 \begin{align*}
  \sup_{t\in(0,T)}\Vert u_\eps(\cdot,t)\Vert_{L^\infty(\Omega)}\leq C(T).
 \end{align*}
\end{lem}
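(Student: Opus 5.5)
We argue by a Moser-type iteration along the lines of \cite[Lemma 2.11]{FuestLankeitMizukami2021}, making sure every constant depends only on $T$, $L$, $\kappa$, $\mu$ and $\Omega$, never on $\eps$. The first step controls the chemotactic drift uniformly. By Lemma \ref{L3 Schranke für u} and Lemma \ref{elliptische Regularitätstheorie} with $q=3$, $\Vert v_\eps(\cdot,t)\Vert_{W^{2,3}(\Omega)}\leq C(T)$, and the embedding $W^{2,3}(\Omega)\hookrightarrow W^{1,\infty}(\Omega)$ in two dimensions then gives $\Vert\nabla v_\eps\Vert_{L^\infty(\Omega\times(0,T))}\leq K(T)$. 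Together with Lemma \ref{Zeitabhängige obere Schranke für v}, which yields $C_0(T)\le e^{-v_\eps}\le 1$, this bounds both the diffusion coefficient from below and the drift $e^{-v_\eps}u_\eps\nabla v_\eps$ by $K(T)u_\eps$.

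Next we insert this into Lemma \ref{ODI Lp Norm} with $\eta=\frac12$, for $p\geq 2$. The drift term becomes $\frac{p-1}{2}\io u_\eps^p|\nabla v_\eps|^2\le \frac{(p-1)K^2}{2}\io u_\eps^p$. The dissipation is rewritten as $\frac{C(T)(p-1)}{2}\cdot\frac{4}{p^2}\io|\nabla u_\eps^{p/2}|^2$, and the logistic contribution is at most $\frac{\kappa^{p+1}}{\mu^p}|\Omega|$. To close the estimate we use the two-dimensional Gagliardo--Nirenberg inequality $\Vert w\Vert_{L^2}^2\le c\Vert\nabla w\Vert_{L^2}\Vert w\Vert_{L^1}+c\Vert w\Vert_{L^1}^2$ with $w=u_\eps^{p/2}$, followed by Young's inequality. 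This gives
\begin{align*}
\frac{d}{dt}\io u_\eps^p+\io u_\eps^p\le A\,p^{b}\Big(\io u_\eps^{p/2}\Big)^2+p\,\frac{\kappa^{p+1}}{\mu^p}|\Omega|,
\end{align*}
where $A,b$ depend only on $T$. The $p$-dependence of the constant is only polynomial: the ratio between the drift term, of order $p^2K^2$ after multiplying by $p$, and the dissipation, of order $C(T)/p$, is polynomial in $p$, and Young's inequality turns it into a power of $p$.

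An ODE comparison then gives, for $M_p:=\max\{1,\Vert u_0\Vert_{L^\infty(\Omega)},\sup_{t<T}\Vert u_\eps(\cdot,t)\Vert_{L^p(\Omega)}\}$ and $p_k=2^k$, the recursion $M_{p_k}^{p_k}\le \max\{|\Omega|L^{p_k},\,A p_k^{b}M_{p_{k-1}}^{p_k}+ p_k(\kappa/\mu)^{p_k}\kappa|\Omega|\}$. The initial datum is handled by $\io u_0^p\le|\Omega|L^p$. For the logistic term we absorb $(\kappa/\mu)^{p_k}$ by replacing $M_p$ with $\max\{M_p,\kappa/\mu\}$. Taking $p_k$-th roots yields $M_{p_k}\le (Dp_k^{b})^{1/p_k}M_{p_{k-1}}$, and since $\prod_k (D2^{kb})^{2^{-k}}<\infty$, the sequence $M_{p_k}$ is bounded in terms of $M_2$. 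The starting value $M_2$ is controlled by Lemma \ref{L3 Schranke für u} via Hölder's inequality. Letting $k\to\infty$ gives the uniform $L^\infty$ bound.

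The main obstacle is the careful tracking of the exponent of $p$ in the constants, so that the iteration product converges. This is routine once $\nabla v_\eps$ is bounded, and that bound is exactly what the $L^3$ estimate provides. Because Lemma \ref{ODI Lp Norm} is already $\eps$-uniform, the whole argument is $\eps$-independent.
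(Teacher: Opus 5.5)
Your proposal is correct and follows essentially the same route as the paper. The shared steps are: an $\eps$-uniform bound on $\Vert\nabla v_\eps\Vert_{L^\infty(\Omega)}$ from the $L^3$ estimate and $W^{2,3}(\Omega)\hookrightarrow W^{1,\infty}(\Omega)$; the $L^p$ differential inequality with $\eta=\frac12$; the Gagliardo--Nirenberg inequality applied to $u_\eps^{p/2}$, with constants polynomial in $p$; and a Moser iteration along $p_k=2^k$ whose product of constants converges.

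The differences are cosmetic. You add $\io u_\eps^p$ to the left and use an ODE comparison where the paper integrates over $(0,T')$. You start at $p=2$ via the $L^3$ bound rather than at $p=1$ via the mass bound. Your absorption of $(\kappa/\mu)^{p_k}$ through $\max\{M_p,\kappa/\mu\}$ is tidier than the paper's auxiliary sequence $B_{j,\eps}$. One bookkeeping point: absorb the initial-data term $|\Omega|L^{p_k}$ in the same way, for instance by putting $L$ rather than $\Vert u_0\Vert_{L^\infty(\Omega)}$ into the definition of $M_p$.
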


\begin{proof}
  We test the parabolic equation of \eqref{Hauptsystem} by $u_\eps^{p-1}$,
  and apply Lemma \ref{ODI Lp Norm} with $\eta_1=\frac{1}{2}$ to find a constant $C_1(T)>0$ such that
    \begin{align*}
        \frac{1}{p}\frac{d}{dt}\io u_\eps^p
        \leq-\frac{C_1(T)(p-1)}{2}\io u_\eps^{p-2}\vert\nabla u_\eps\vert^2
        +\frac{p-1}{2}\io u_\eps^p\vert\nabla v_\eps\vert^2
        +\kappa\left(\frac{\kappa}{\mu}\right)^p\vert\Omega\vert,
    \end{align*}
    in $(0,T)$ for each $\eps\in(0,1)$.
    Hölder's inequality, Lemma \ref{elliptische Regularitätstheorie}
    and Lemma \ref{L3 Schranke für u} imply the existence of $C_2,C_3,C_4(T)>0$ such that
    \begin{align*}
    \begin{split}
        \io u_\eps^p\vert\nabla v_\eps\vert^2
        \leq\Vert\nabla v_\eps\Vert_{L^\infty(\Omega)}^2\io u_\eps^p
        \leq C_2\Vert v_\eps\Vert_{W^{2,3}(\Omega)}^2\io u_\eps^p
        \leq C_3\Vert u_\eps\Vert_{L^3(\Omega)}^2\io u_\eps^p
        \leq C_4(T)\io u_\eps^p,
        \end{split}
    \end{align*}
    in $(0,T)$ for each $\eps\in(0,1)$.
    Due to the Gagliardo-Nirenberg inequality, we find $C_5,C_6>0$
    and apply Young's inequality with $\eta_2>0$ such that
    \begin{align*}
        \io u_\eps^p=\Vert u_\eps^\frac{p}{2}\Vert_{L^2(\Omega)}^2
        &\leq(C_5\Vert\nabla u_\eps^\frac{p}{2}\Vert_{L^2(\Omega)}^\frac{1}{2}
        \Vert u_\eps^\frac{p}{2}\Vert_{L^1(\Omega)}^\frac{1}{2}
        +C_6\Vert u_\eps^\frac{p}{2}\Vert_{L^1(\Omega)})^2\\
        &\leq2C_5^2\Vert\nabla u_\eps^\frac{p}{2}\Vert_{L^2(\Omega)}\Vert u_\eps^\frac{p}{2}
        \Vert_{L^1(\Omega)}+2C_6^2\Vert u_\eps^\frac{p}{2}\Vert_{L^1(\Omega)}^2\\
        &\leq\eta_2\Vert\nabla u_\eps^\frac{p}{2}\Vert_{L^2(\Omega)}^2
        +\frac{1}{\eta_2}C_5^4\Vert u_\eps^\frac{p}{2}\Vert_{L^1(\Omega)}^2
        +2C_6^2\Vert u_\eps^\frac{p}{2}\Vert_{L^1(\Omega)}^2\quad\text{in }(0,T)\:\text{for each }\eps\in(0,1).
    \end{align*}
    Since $p-1\geq\frac{p}{2}\iff p\geq 2$, we find $C_7>0$ such that
   \begin{align*}
       -\frac{2C_1(T)(p-1)}{p^2}\leq-\frac{2C_1(T)\frac{p}{2}}{C_7p^2}=-\frac{C_1(T)}{C_7p},
   \end{align*}
   for all $p\geq 2$. If we apply the obvious inequality $\frac{p-1}{2}\leq\frac{p}{2}$
   for all $p\geq 1$ and choose $\eta_2=\frac{2C_1(T)}{C_4(T)C_7p^2}$, then we have due to the
   identity $u_\eps^{p-2}\vert\nabla u_\eps\vert^2=\frac{4}{p^2}\vert\nabla u_\eps^\frac{p}{2}\vert^2$
   \begin{align}\label{Moser-Iteration, 1/p d/dt}
       \frac{1}{p}\frac{d}{dt}\io u^p_\eps(\cdot,t)\leq\frac{C_4^2(T)C_5^4C_7p^2}{2C_1(T)}
       \left(\io u_\eps^\frac{p}{2}(\cdot,t)\right)^2
       +C_4(T)C_6^2p\left(\io u_\eps^\frac{p}{2}(\cdot,t)\right)^2
       +\kappa\left(\frac{\kappa}{\mu}\right)^p\vert\Omega\vert,
   \end{align}
   for all $t\in(0,T)$ and $\eps\in(0,1)$.
   Now we multiply \eqref{Moser-Iteration, 1/p d/dt} by $p$ and
   integrate over $(0,T')$ for arbitrary $T'\in(0,T)$, which yields
   \begin{align*}
       &\io u_\eps^p(\cdot,T')\leq\io u_0^p+\left(\frac{C_4^2(T)C_5^4C_7p^3}{2C_1(T)}+C_4(T)C_6^2p^2\right)\int_0^{T'}
       \left(\io u_\eps^\frac{p}{2}(\cdot,s)ds\right)^2+\kappa pT\vert\Omega\vert\left(\frac{\kappa}{\mu}\right)^p\\
       \leq&\io u_0^p+T\left(\frac{C_4^2(T)C_5^4C_7p^3}{2C_1(T)}+C_4(T)C_6^2p^2\right)
       \sup_{t\in(0,T)}\left(\io u_\eps^\frac{p}{2}(\cdot,t)\right)^2
       +\kappa pT\vert\Omega\vert\left(\frac{\kappa}{\mu}\right)^p\\
       \leq&\io u_0^p+Tp^3\left(\frac{C_4^2(T)C_5^4C_7}{2C_1(T)}+C_4(T)C_6^2\right)
       \sup_{t\in(0,T)}\left(\io u_\eps^\frac{p}{2}(\cdot,t)\right)^2+\kappa pT\vert\Omega\vert\left(\frac{\kappa}{\mu}\right)^p,
   \end{align*}
   for all $\eps\in(0,1)$, where we used $p^2\leq p^3$ for all $p\geq 1$. Hence,
   \begin{align*}
       \io u_\eps^p(\cdot,T')\leq\max\left\{2\left(\io u_0^p+\kappa pT\vert\Omega\vert\left(\frac{\kappa}{\mu} \right)^p \right),
       TC_8(T)p^3\sup_{t\in(0,T')}\left(\io u_\eps^\frac{p}{2}(\cdot,t) \right)^2 \right\},
   \end{align*}
   for all $\eps\in(0,1)$, $p\in\mathbb{N}\setminus\{1\}$
   and where $C_8(T):=2\left(\frac{C_4^2(T)C_5^4C_7}{2C_1(T)}+C_4(T)C_6^2\right)$.
   Since $T'$ is arbitrary, the estimate holds true for all $T'\in[0,T]$.
   Let us define a sequence $(A_{j,\eps})_{j\in\mathbb{N}}\subseteq(0,\infty)$ as
   $A_{j,\eps}:=\sup_{t\in(0,T)}\Vert u_\eps(\cdot,t)\Vert_{L^{p_j}(\Omega)}$,
   where $(p_j)_{j\in\mathbb{N}}\subseteq[1,\infty)$ with $p_j:=2^j$. With that choice we have
   \begin{align*}
       A_{j,\eps}&\leq\Vert u_0\Vert_{L^{p_j}(\Omega)}+\left(TC_8(T)p_j^3\right)^\frac{1}{p_j}A_{j-1,\eps}
       +\left(\kappa p_jT\vert\Omega\vert\right)^\frac{1}{p_j}\left(\frac{\kappa}{\mu}\right)\\
       &\leq\max\{2^{\frac{1}{p_j}}\Vert u_0\Vert_{L^{p_j}(\Omega)}
       +2^{\frac{1}{p_j}}\left(\kappa p_jT\vert\Omega\vert\right)^\frac{1}{p_j}\left(\frac{\kappa}{\mu}\right),
       \left(TC_8(T)p_j^3\right)^\frac{1}{p_j}A_{j-1,\eps}\}.
   \end{align*}
   Now we introduce the sequence $(B_{j,\eps})_{j\in\mathbb{N}}\subseteq(0,\infty)$ such that
   \begin{align*}
       B_{0,\eps}=L\cdot\max\{1,\vert\Omega\vert\}+\frac{T\kappa\vert\Omega\vert}{\mu},
       \quad B_{j,\eps}=\max\{2L\cdot\max\{1,\vert\Omega\vert\}
       +2C_9(T)\frac{\kappa}{\mu},(TC_8(T)p_j^3)^\frac{1}{p_j}B_{j-1,\eps}\},
   \end{align*}
   for all $j\in\mathbb{N}$, where $C_9(T):=\sup_{l\in\mathbb{N}}
   \{(\kappa p_lT\vert\Omega\vert)^\frac{1}{p_l}\}<\infty$.
   Boundedness of $C_9(T)$ follows by $\lim_{j\to\infty}(\kappa p_jT\vert\Omega\vert)^\frac{1}{p_j}=1$,
   because of convergence.
   By Lemma \ref{Beschränktheit von u in L^1} we have $A_{0,\eps}\leq B_{0,\eps}$ for all $\eps\in(0,1)$.
   In addition it holds
   \begin{align*}
       2^\frac{1}{p_j}\Vert u_0\Vert_{L^{p_j}(\Omega)}
       +2^\frac{1}{p_j}(\kappa p_jT\vert\Omega\vert)^\frac{1}{p_j}
       \frac{\kappa}{\mu}\leq 2\Vert u_0\Vert_{L^\infty(\Omega)}\cdot\max\{1,\vert\Omega\vert\}
       +2\frac{\kappa}{\mu}C_9(T)\quad\text{for all }j\in\N.
   \end{align*}
   Due to monotonicity of the right-hand side with respect to $B_{j-1,\eps}$,
   we can conclude $A_{j,\eps}\leq B_{j,\eps}$ for all $\eps\in(0,1)$ and $j\in\mathbb{N}_0$.
   Hence, we have
   \begin{align*}
       \sup_{t\in(0,T)}\Vert u_\eps(\cdot,t)\Vert_{L^\infty(\Omega)}=
       \lim_{j\to\infty}A_{j,\eps}\leq\liminf_{j\to\infty}B_{j,\eps}\quad\text{for all }\eps\in(0,1).
   \end{align*}
   It is now sufficient to show $\liminf_{j\to\infty}B_{j,\eps}<\infty$ for all $\eps\in(0,1)$.
   If there exists a sequence $(j_k)_{k\in\mathbb{N}}\subseteq\mathbb{N}_0$ such that $j_k\to\infty$
   as $k\to\infty$ and
   $B_{j_k,\eps}\leq 2L\cdot\max\{\vert\Omega\vert,1\}+2\frac{\kappa}{\mu}C_9(T)$
   for all $k\in\mathbb{N}$, then
   \begin{align*}
       \liminf_{j\to\infty}B_{j,\eps}\leq\liminf_{k\to\infty}B_{j_k,\eps}
       \leq2L\cdot\max\{\vert\Omega\vert,1\}+2\frac{\kappa}{\mu}C_9(T).
   \end{align*}
   Now let us assume that there is $j_0\in\mathbb{N}$ such that $B_{j,\eps}>2L\cdot
   \max\{\vert\Omega\vert,1\}+2\frac{\kappa}{\mu}C_9(T)$
   for all $j\geq j_0$. This implies
   \begin{align*}
       B_{j,\eps}\leq (TC_8(T)p_j^3)^\frac{1}{p_j}B_{j-1}\quad\forall j\geq j_0.
   \end{align*}
   By induction, it follows
   \begin{align*}
       B_{j,\eps}\leq\left(\prod_{k=j_0}^j(TC_8(T)p_k^3)^\frac{1}{p_k}\right)B_{j_0,\eps}
       \leq \left((TC_8(T))^{\sum_{k=j_0}^j
       \frac{1}{2^k}}\cdot 2^{3\sum_{k=j_0}^j\frac{k}{2^k}}\right)\cdot B_{j_0,\eps},
   \end{align*}
   for all $j\geq j_0$. Due to
   \begin{align*}
       \limsup_{k\to\infty}\sqrt[k]{\frac{k}{2^k}}=\frac{1}{2},
   \end{align*}
   the series $\sum_{k=0}^\infty \frac{k}{2^k}$ converges.
   Combined with $\sum_{k=0}^\infty\frac{1}{2^k}=\frac{1}{1-\frac{1}{2}}=2$ and $B_{j_0,\eps}<\infty$,
   this yields the statement.
\end{proof}

Thanks to the independence of the local-in-time bound of $\Vert u_\eps\Vert_{L^\infty(\Omega)}$
of $\eps$ we can find bounds for solutions in Hölder spaces.

\section{Hölder continuity}
By \cite[Theorem 4]{DiBenedetto1986} we can show the existence of bounds for $u_\eps,v_\eps$ in suitable Hölder spaces. Since
$W^{1,q}(\Omega)\hookrightarrow C^\alpha(\overline{\Omega})$ for $q>2$ and some $\alpha\in(0,1)$ we obtain those constants even
for $t=0$.
\begin{lem}
Suppose \eqref{Generalvoraussetzungen} and let $T\in(0,\infty)$.
 Then there are $C_1(T),C_2(T)>0$ and $\lambda\in(0,1)$ such that
for all $\eps\in(0,1)$ the corresponding classical solution $(u_\eps,v_\eps)$ of \eqref{Hauptsystem} satisfies
 \begin{align*}
  &\Vert u_\eps\Vert_{C^{\lambda,\frac{\lambda}{2}}(\overline{\Omega}\times[0,T])}\leq C_1(T),\\
  &\Vert v_\eps\Vert_{C^{2+\lambda,\frac{\lambda}{2}}(\overline{\Omega}\times[0,T])}\leq C_2(T).
 \end{align*}
\end{lem}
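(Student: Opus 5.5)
We rewrite the first equation of \eqref{Hauptsystem} in divergence form,
\begin{align*}
 u_{\eps t}=\nabla\cdot\bigl(a_\eps(x,t)\nabla u_\eps+b_\eps(x,t)u_\eps\bigr)+f_\eps(x,t),
\end{align*}
where $a_\eps:=e^{-v_\eps}$, $b_\eps:=-e^{-v_\eps}\nabla v_\eps$ and $f_\eps:=\eps(\kappa u_\eps-\mu u_\eps^2)$. We then check that the structural assumptions of \cite[Theorem 4]{DiBenedetto1986} hold uniformly in $\eps\in(0,1)$ on $\Omega\times(0,T)$. Lemma \ref{Zeitabhängige obere Schranke für v} together with $v_\eps\geq0$ gives $C(T)\leq a_\eps\leq1$, so the equation is uniformly parabolic with $\eps$-independent ellipticity constants. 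Lemma \ref{Linfty Schranke für u} gives $\Vert u_\eps\Vert_{L^\infty(\Omega\times(0,T))}\leq C(T)$. Combining Lemma \ref{elliptische Regularitätstheorie} in $W^{2,q}$ for some $q>2$ with the embedding $W^{2,q}(\Omega)\hookrightarrow W^{1,\infty}(\Omega)$, we get $\Vert\nabla v_\eps\Vert_{L^\infty}\leq C(T)$, so $b_\eps$ is bounded. Finally $\vert f_\eps\vert\leq\kappa C(T)+\mu C(T)^2$ is bounded independently of $\eps$. The homogeneous Neumann condition on $\partial\Omega$ reads $\bigl(a_\eps\nabla u_\eps+b_\eps u_\eps\bigr)\cdot\nu=0$, since $\partial_\nu v_\eps=0$ as well. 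This conormal boundary condition is covered by DiBenedetto's boundary regularity, and the initial datum $u_0\in W^{1,q}(\Omega)\hookrightarrow C^{\alpha}(\overline\Omega)$ allows the estimate to reach $t=0$. Hence there are $\lambda_1\in(0,1)$ and $C_1(T)$, depending only on these data, such that $\Vert u_\eps\Vert_{C^{\lambda_1,\lambda_1/2}(\overline\Omega\times[0,T])}\leq C_1(T)$.

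For $v_\eps$ we first apply the Schauder estimate of Lemma \ref{elliptische Regularitätstheorie} at each fixed time, which gives $\Vert v_\eps(\cdot,t)\Vert_{C^{2+\lambda_1}(\overline\Omega)}\leq C\Vert u_\eps(\cdot,t)\Vert_{C^{\lambda_1}(\overline\Omega)}\leq C\,C_1(T)$ uniformly in $t\in[0,T]$. For the time regularity we use linearity: $v_\eps(\cdot,t)-v_\eps(\cdot,s)=(I-\Delta)^{-1}\bigl(u_\eps(\cdot,t)-u_\eps(\cdot,s)\bigr)$. Controlling $\vert t-s\vert^{-\lambda/2}\Vert v_\eps(\cdot,t)-v_\eps(\cdot,s)\Vert_{C^{2}}$ therefore requires the difference $u_\eps(\cdot,t)-u_\eps(\cdot,s)$ to be small in a Hölder norm in space, not only in sup norm. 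We obtain this by interpolation. The difference is bounded by $C\vert t-s\vert^{\lambda_1/2}$ in $L^\infty$ and by $2C_1(T)$ in $C^{\lambda_1}(\overline\Omega)$. Interpolating, for $\lambda<\lambda_1$ we get
\begin{align*}
 \Vert u_\eps(\cdot,t)-u_\eps(\cdot,s)\Vert_{C^{\lambda}(\overline\Omega)}\leq C\vert t-s\vert^{\frac{\lambda_1}{2}(1-\frac{\lambda}{\lambda_1})}.
\end{align*}
Applying the $C^{2+\lambda}$-Schauder estimate to this difference then gives Hölder continuity in time with some positive exponent. Reducing $\lambda$ further if necessary (for instance $\lambda\leq\lambda_1/2$, which makes the time exponent at least $\lambda/2$), we obtain the claimed bound on $\Vert v_\eps\Vert_{C^{2+\lambda,\lambda/2}(\overline\Omega\times[0,T])}$, and the same $\lambda$ also works for $u_\eps$.

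The main obstacle is the first step: verifying carefully that DiBenedetto's theorem applies up to the boundary and up to $t=0$ with constants depending only on the listed bounds and not on $\eps$. The bounds on $a_\eps$, $b_\eps$ and $f_\eps$ are routine given the earlier lemmas. The delicate part is matching the conormal Neumann structure and the initial Hölder regularity to the hypotheses of \cite{DiBenedetto1986}. Should that be problematic, the fallback is to use \cite[Theorem V.1.1]{LSU}-type Hölder estimates for bounded weak solutions of divergence-form equations with bounded coefficients.
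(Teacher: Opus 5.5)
Your proposal is correct and follows the same route as the paper. You obtain Hölder regularity of $u_\eps$ from \cite[Theorem 4]{DiBenedetto1986}, using the $\eps$-independent $L^\infty$ bound for $u_\eps$, the lower bound $e^{-v_\eps}\geq C(T)$, and the $W^{1,q}$-regularity of $u_0$ to reach $t=0$, and then apply the elliptic Schauder estimate to $v_\eps$. Your interpolation step (taking $\lambda\leq\lambda_1/2$ so that $u_\eps(\cdot,t)-u_\eps(\cdot,s)$ is $O(\vert t-s\vert^{\lambda/2})$ in $C^{\lambda}(\overline{\Omega})$) makes explicit the temporal Hölder continuity of $D^2 v_\eps$, which the paper's one-line appeal to the elliptic estimate leaves implicit.
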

\begin{proof}
 Since $u_\eps$ is locally-in-time bounded, we can apply \cite[Theorem 4]{DiBenedetto1986}. For the second
 estimate, we apply Lemma \ref{elliptische Regularitätstheorie}.
\end{proof}

Due to our $\eps$-independent bounds, we can show uniform convergence of solutions of \eqref{Hauptsystem}.
\begin{lem}\label{Gleichmäßige Konvergenz von ueps und veps}
Suppose \eqref{Generalvoraussetzungen}.
Let $(\eps_j)_{j\in\N}\subseteq(0,1)$ be a nullsequence and $T\in(0,\infty)$.
Then there is a subsequence $(\eps_{j_k})_{k\in\N}\subseteq(0,1)$ such that the corresponding classical
solutions $((u_{\eps_{j_k}},v_{\eps_{j_k}}))$ of \eqref{Hauptsystem}
satisfy
\begin{align*}
 &u_{\eps_{j_k}}\to u\quad\text{in }C^0(\overline{\Omega}\times[0,T])\quad\text{as }k\to\infty,\\
 &v_{\eps_{j_k}}\to v\quad\text{in }C^{2,0}(\overline{\Omega}\times[0,T])\quad\text{as }k\to\infty,
\end{align*}
where the limit pair satisfies
\begin{align*}
 (u,v)\in C^0(\overline{\Omega}\times[0,T])\times C^{2,0}(\overline{\Omega}\times[0,T]).
\end{align*}
\end{lem}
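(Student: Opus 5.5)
The plan is to combine the uniform-in-$\eps$ Hölder bounds from the preceding lemma with the Arzel\`a--Ascoli theorem. Fix $T\in(0,\infty)$. By the preceding lemma there are $\lambda\in(0,1)$ and $C_1(T),C_2(T)>0$, independent of $\eps$, with
\begin{align*}
 \Vert u_\eps\Vert_{C^{\lambda,\frac{\lambda}{2}}(\overline{\Omega}\times[0,T])}\leq C_1(T),\qquad
 \Vert v_\eps\Vert_{C^{2+\lambda,\frac{\lambda}{2}}(\overline{\Omega}\times[0,T])}\leq C_2(T)
\end{align*}
for all $\eps\in(0,1)$. In particular $(u_{\eps_j})_{j\in\N}$ is uniformly bounded and equicontinuous on the compact set $\overline{\Omega}\times[0,T]$. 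Arzel\`a--Ascoli then gives a subsequence converging in $C^0(\overline{\Omega}\times[0,T])$ to some $u\in C^0(\overline{\Omega}\times[0,T])$; the limit is in fact $C^{\lambda,\frac{\lambda}{2}}$, since Hölder seminorm bounds pass to uniform limits.

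For $v_\eps$ I would use the compact embedding $C^{2+\lambda,\frac{\lambda}{2}}(\overline{\Omega}\times[0,T])\hookrightarrow C^{2,0}(\overline{\Omega}\times[0,T])$, made explicit by applying Arzel\`a--Ascoli to each of the finitely many families $v_\eps$, $\partial_{x_i}v_\eps$ and $\partial_{x_ix_k}v_\eps$. Each is bounded by $C_2(T)$. Each is $\lambda$-Hölder in $x$ and $\frac{\lambda}{2}$-Hölder in $t$, because the norm $C^{2+\lambda,\frac{\lambda}{2}}$ as produced in the preceding lemma (via Lemma \ref{elliptische Regularitätstheorie} applied at each time to $u_\eps(\cdot,t)-u_\eps(\cdot,s)$) controls the time-Hölder modulus of all spatial derivatives up to order two. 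Concretely, from the linearity of the elliptic problem and Lemma \ref{elliptische Regularitätstheorie},
\begin{align*}
 \Vert v_\eps(\cdot,t)-v_\eps(\cdot,s)\Vert_{C^{2}(\overline{\Omega})}\leq C\Vert u_\eps(\cdot,t)-u_\eps(\cdot,s)\Vert_{C^{\alpha}(\overline{\Omega})}
\end{align*}
for any $\alpha\in(0,1)$. Interpolating between $C^0$ and $C^\lambda$, with some $\alpha<\lambda$, this is bounded by $C\vert t-s\vert^{\beta}$ for some $\beta>0$ depending only on $\lambda$ and $\alpha$. This interpolation is the main technical point I would double-check. Passing to further subsequences (a diagonal-type refinement over finitely many families), all these derivatives converge uniformly. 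Hence $v_{\eps_{j_k}}\to v$ in $C^{2,0}(\overline{\Omega}\times[0,T])$, with $v$ of class $C^{2,0}$ since uniform limits of derivatives are derivatives of the limit.

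Finally, the two subsequence extractions are done successively, first for $u$ and then within that subsequence for $v$, so that a single subsequence $(\eps_{j_k})_{k\in\N}$ works for both. As a consistency check, passing to the limit in $-\Delta v_\eps+v_\eps=u_\eps$ pointwise shows $-\Delta v+v=u$, although this is not required by the statement. I expect the only delicate step to be the equicontinuity in time of $D^2v_\eps$, handled by the elliptic estimate applied to time differences as above; everything else is a direct application of Arzel\`a--Ascoli.
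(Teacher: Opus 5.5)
Your proof is correct. For $u$ it is the same as the paper's: Arzel\`a--Ascoli applied to the uniform Hölder bound.

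For $v$ you take a compactness route, while the paper uses a Cauchy-sequence argument based on the linearity of the elliptic problem. The paper applies Lemma \ref{elliptische Regularitätstheorie} to the difference of two members of the sequence, giving
$\Vert v_{\eps_{j_k}}-v_{\eps_{j_l}}\Vert_{C^{2,0}(\overline{\Omega}\times[0,T])}\leq C\sup_{t\in[0,T]}\Vert u_{\eps_{j_k}}(\cdot,t)-u_{\eps_{j_l}}(\cdot,t)\Vert_{C^{\alpha}(\overline{\Omega})}$ for $\alpha<\lambda$. The right-hand side tends to zero because uniform convergence plus the uniform $C^\lambda$ bound gives convergence in $C^0([0,T];C^\alpha(\overline{\Omega}))$. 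That step uses exactly the $C^0$--$C^\lambda$ interpolation you employ, and the paper leaves it implicit.

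You apply the same Schauder estimate and the same interpolation to time increments $u_\eps(\cdot,t)-u_\eps(\cdot,s)$ of a single solution instead. This yields equicontinuity of $D^2v_\eps$, and you then extract further subsequences.

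The paper's route is shorter. No extra extraction is needed, the limit is automatically $v=(I-\Delta)^{-1}u$, and the convergence even holds in $C^0([0,T];C^{2+\alpha}(\overline{\Omega}))$.

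Yours has the side benefit of giving an explicit time modulus for $D^2v_\eps$. That modulus is what the time part of the preceding lemma's $C^{2+\lambda,\frac{\lambda}{2}}$ bound actually needs, and the paper asserts it without detail. Note that your derivation gives the exponent $\beta=\frac{\lambda-\alpha}{2}$, not $\frac{\lambda}{2}$ as you first claim. Any uniform modulus suffices for Arzel\`a--Ascoli, so this does not affect the argument.
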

\begin{proof}
 The first convergence is a consequence of the Arzel\`{a}-Ascoli theorem. For the second one we use
 the computation from \cite[Proof of Lemma 4.4]{Lankeit2015}
 and obtain a constant $C>0$ by Lemma \ref{elliptische Regularitätstheorie} such that
 \begin{align*}
  \Vert v_{\eps_{j_k}}-v_{\eps_{j_l}}\Vert_{C^{2,0}(\overline{\Omega}\times[0,T])}
  \leq&\Vert v_{\eps_{j_k}}-v_{\eps_{j_l}}\Vert_{C^{2+\alpha,0}(\overline{\Omega}\times[0,T])}\\
  \leq&C\Vert u_{\eps_{j_k}}-u_{\eps_{j_l}}\Vert_{C^\alpha,0}(\overline{\Omega}\times[0,T])
  =C\Vert u_{\eps_{j_k}}-u_{\eps_{j_l}}\Vert_{C^0([0,T];\:C^\alpha(\overline{\Omega}))},
 \end{align*}
for $\alpha\in(0,\lambda)$. Hence, $(v_{\eps_{j_k}})_{k\in\N}\subseteq C^{2,0}(\overline{\Omega}\times[0,T])$
is a Cauchy sequence and thus
\begin{align*}
 v_{\eps_{j_k}}\to v\quad\text{in }C^{2,0}(\overline{\Omega}\times[0,T]).
\end{align*}
\end{proof}
We can even show that the limit functions are more regular than claimed
in Lemma \ref{Gleichmäßige Konvergenz von ueps und veps}
\begin{cor}\label{Grenzfunktionen sind Hölderstetig}
Suppose \eqref{Generalvoraussetzungen} and let $T\in(0,\infty)$. Then there
is $\alpha\in(0,\lambda)$ such that $u\in C^{\alpha,\frac{\alpha}{2}}(\overline{\Omega}\times[0,T])$,
 $v\in C^{2+\alpha,\frac{\alpha}{2}}(\overline{\Omega}\times[0,T])$,
 where $u$ and $v$ are the limit functions from Lemma \ref{Gleichmäßige Konvergenz von ueps und veps}.
\end{cor}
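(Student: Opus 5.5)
The Hölder regularity of the limits comes from the $\eps$-independent bounds of the preceding (unnamed) Hölder lemma, passed to the limit along the subsequence of Lemma \ref{Gleichmäßige Konvergenz von ueps und veps}. Hölder seminorms are lower semicontinuous under uniform (even pointwise) convergence, so it suffices to take the limit in the defining difference quotients.

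\textbf{Step 1: the bound for $u$.} Fix $T>0$ and the subsequence $\eps_{j_k}$ with $u_{\eps_{j_k}}\to u$ in $C^0(\overline{\Omega}\times[0,T])$. For any $(x,t)\neq(y,s)$ in $\overline{\Omega}\times[0,T]$ we have
\begin{align*}
 \vert u_{\eps_{j_k}}(x,t)-u_{\eps_{j_k}}(y,s)\vert\leq C_1(T)\left(\vert x-y\vert^\lambda+\vert t-s\vert^{\frac{\lambda}{2}}\right)
\end{align*}
uniformly in $k$. Letting $k\to\infty$ pointwise gives the same inequality for $u$, and $\Vert u\Vert_{L^\infty}\leq C_1(T)$ as well. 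Hence $u\in C^{\lambda,\frac{\lambda}{2}}(\overline{\Omega}\times[0,T])$. Since $\overline{\Omega}\times[0,T]$ is bounded, this embeds into $C^{\alpha,\frac{\alpha}{2}}$ for every $\alpha\in(0,\lambda)$, which gives the claim for $u$ with any such $\alpha$.

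\textbf{Step 2: the bound for $v$.} Since $v_{\eps_{j_k}}\to v$ in $C^{2,0}(\overline{\Omega}\times[0,T])$, all spatial derivatives up to order two converge uniformly. The estimate $\Vert v_\eps\Vert_{C^{2+\lambda,\frac{\lambda}{2}}}\leq C_2(T)$ bounds three things uniformly in $\eps$: the spatial $\lambda$-Hölder seminorm of $D^2v_\eps$, the temporal $\frac{\lambda}{2}$-Hölder seminorm of $v_\eps$ and its derivatives up to second order (whatever the exact convention of that space is), and the sup norms. Each of these is a supremum of difference quotients of quantities that converge pointwise, so the same argument as in Step 1 transfers every bound to $v$. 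Alternatively, one can argue directly. Since $u(\cdot,t)\in C^{\lambda}(\overline{\Omega})$ with a $t$-uniform bound, Lemma \ref{elliptische Regularitätstheorie} applies to $-\Delta v+v=u$, which is the limit of the elliptic equations because the second derivatives converge. It gives $\sup_t\Vert v(\cdot,t)\Vert_{C^{2+\lambda}}\leq C\sup_t\Vert u(\cdot,t)\Vert_{C^\lambda}$. The same lemma applied to $v(\cdot,t)-v(\cdot,s)$, together with interpolation between $C^\lambda$ in space and $C^{\lambda/2}$ in time for $u$, gives time Hölder continuity of $D^2v$ with some smaller exponent. This possible loss of exponent is the reason to allow $\alpha<\lambda$.

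\textbf{Main obstacle.} The only delicate point is the time regularity of the second derivatives of $v$ in the second route. There, $\Vert u(\cdot,t)-u(\cdot,s)\Vert_{C^{\alpha}}$ has to be controlled by a power of $\vert t-s\vert$. I would do this by interpolation:
\begin{align*}
 [w]_{C^\alpha}\leq C\Vert w\Vert_{L^\infty}^{1-\frac{\alpha}{\lambda}}[w]_{C^\lambda}^{\frac{\alpha}{\lambda}}\quad\text{with } w=u(\cdot,t)-u(\cdot,s),
\end{align*}
which yields the bound $C\vert t-s\vert^{\frac{\lambda}{2}(1-\frac{\alpha}{\lambda})}$. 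Choosing $\alpha$ smaller if needed then produces a common exponent $\alpha\in(0,\lambda)$ for both $u$ and $v$. The first route, passing the $\eps$-uniform bounds to the limit, avoids this issue entirely, so I would present that one and keep the interpolation only as a remark.
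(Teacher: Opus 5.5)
Your proof is correct, but it uses a different mechanism from the paper's. The paper's proof is only a citation of Gilbarg--Trudinger, Lemma 6.33, i.e.\ the interpolation/compact-embedding theory for Hölder spaces. Families bounded in $C^{\lambda}$ (resp.\ $C^{2+\lambda}$) are precompact in $C^{\alpha}$ (resp.\ $C^{2+\alpha}$) for $\alpha<\lambda$, so the limit lands in the space with the smaller exponent. That is where the restriction $\alpha\in(0,\lambda)$ in the statement comes from.

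You instead use lower semicontinuity of Hölder seminorms under pointwise convergence. This is completely elementary and gives the sharper conclusion $u\in C^{\lambda,\frac{\lambda}{2}}(\overline{\Omega}\times[0,T])$ and $v\in C^{2+\lambda,\frac{\lambda}{2}}(\overline{\Omega}\times[0,T])$, with the same bounds $C_1(T)$, $C_2(T)$. Any $\alpha<\lambda$ then follows from the trivial embedding on a bounded set.

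What the compactness/interpolation viewpoint gives beyond membership is convergence in $C^{\alpha}$-type norms, such as $u_{\eps_{j_k}}\to u$ in $C^0([0,T];C^{\alpha}(\overline{\Omega}))$. The convergence lemma needs this to make $(v_{\eps_{j_k}})$ Cauchy, and your inequality $[w]_{C^\alpha}\le C\Vert w\Vert_{L^\infty}^{1-\alpha/\lambda}[w]_{C^\lambda}^{\alpha/\lambda}$ is exactly the mechanism behind it.

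One caveat about preferring your first route for $v$: it can only transfer what the preceding Hölder lemma actually establishes for $v_\eps$, and the paper justifies that bound by elliptic regularity alone. If the norm of $C^{2+\lambda,\frac{\lambda}{2}}$ is meant to include time-Hölder continuity of $D^2v_\eps$, elliptic regularity gives this only with a loss of exponent, exactly as in your second route. There the time exponent is $\frac{\lambda-\alpha}{2}\ge\frac{\alpha}{2}$ once $\alpha\le\frac{\lambda}{2}$. So the second route is the self-contained one, and it still fits the corollary's formulation.
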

\begin{proof}
 \cite[Lemma 6.33]{GilbargTrudinger2001}.
\end{proof}

\section{Limit solutions}
Due to our convergence results, we can now show that the pair $(u,v)$
is the unique classical solution of \eqref{eps=0 System}. Since for now we have
$u\in C^{\alpha,\frac{\alpha}{2}}(\overline{\Omega}\times[0,T])$,
we cannot conclude that the limit pair $(u,v)$ is a classical solutions of
\eqref{eps=0 System} directly. Therefore, we introduce a weaker solution concept.
\begin{definition}\label{Stark schwache Lösungsdefinition}
Suppose \eqref{Generalvoraussetzungen} and
let $T\in(0,\infty]$. By a $W^{1,2}$-solution of \eqref{Hauptsystem} we mean a pair $(u_\eps,v_\eps)$ with
$u_\eps\in C^0(\overline{\Omega}\times[0,T))\cap L^2((0,T);\: W^{1,2}(\Omega))$
and $v_\eps\in C^{2,0}(\overline{\Omega}\times[0,T))$ such that $u_\eps,v_\eps$ are nonnegative,
$v_\eps$ satisfies
\begin{align*}
 -\Delta v_\eps+v_\eps=u_\eps\quad\text{in }\Omega,\quad \partial_\nu v_\eps=0\quad\text{on }\partial\Omega,
\end{align*}
and
\begin{align}\label{schwache Formulierung für u}
\begin{split}
 -\int_0^T\io u_\eps\varphi_t-\io u_0\varphi(\cdot,0)=&-\int_0^T\io\nabla(e^{-v_\eps}u_\eps)\cdot\nabla\varphi
 +\eps\kappa\int_0^T\io u_\eps\varphi-\eps\mu\int_0^T\io u_\eps^2\varphi
\end{split}
\end{align}
holds for all $\varphi\in C_c^\infty(\overline{\Omega}\times[0,T))$.
\end{definition}

\begin{rem}
Due to density arguments, we can reformulate identity \eqref{schwache Formulierung für u}
for $\varphi\in L^1((0,T);\:W^{1,2}(\Omega))$
that have compact support in $\overline{\Omega}\times[0,T)$
and satisfy $\varphi_t\in L^1(\Omega\times(0,T))$ and obtain the same solutions.
\end{rem}

Our aim is now to show that $(u,v)$ from Lemma \ref{Gleichmäßige Konvergenz von ueps und veps} solves \eqref{eps=0 System}
in the sense of Definition \ref{Stark schwache Lösungsdefinition}. The main challenge is to show weak convergence of
$\nabla(e^{-v_\eps}u_\eps)$ in $L^2(\Omega\times(0,T))$. Therefore, we compute some bounds in space and time for
$\nabla u_\eps$ and $\nabla(e^{-v_\eps}u_\eps)$.
\begin{lem}\label{nabla ueps in L2L2 beschränkt}
Suppose \eqref{Generalvoraussetzungen} and
let $T\in(0,\infty)$. Then there is $C(T)>0$ such that for all $\eps\in(0,1)$ the corresponding
classical solution $(u_\eps,v_\eps)$ of \eqref{Hauptsystem} satisfies
 \begin{align*}
  \int_0^T\io\vert\nabla u_{\eps}\vert^2\leq C(T).
 \end{align*}
\end{lem}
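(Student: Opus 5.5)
We will use the $L^2$ energy identity: test the first equation of \eqref{Hauptsystem} with $u_\eps$, i.e.\ Lemma \ref{ODI Lp Norm} with $p=2$. By the local-in-time upper bound for $v_\eps$ from Lemma \ref{Zeitabhängige obere Schranke für v}, we have $e^{-v_\eps}\geq C(T)>0$ on $\Omega\times(0,T)$, uniformly in $\eps\in(0,1)$. Choosing $\eta=\frac12$, we then obtain
\begin{align*}
 \frac12\frac{d}{dt}\io u_\eps^2\leq-\frac{C(T)}{2}\io\vert\nabla u_\eps\vert^2+\frac12\io u_\eps^2\vert\nabla v_\eps\vert^2+\frac{\kappa^3}{\mu^2}\vert\Omega\vert
 \quad\text{in }(0,T).
\end{align*}
The dissipative term is exactly the quantity we want to control. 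Everything else must be bounded independently of $\eps$.

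For the cross term we argue as in the proof of Lemma \ref{Linfty Schranke für u}. The embedding $W^{2,3}(\Omega)\hookrightarrow W^{1,\infty}(\Omega)$, Lemma \ref{elliptische Regularitätstheorie} and Lemma \ref{L3 Schranke für u} give $\Vert\nabla v_\eps\Vert_{L^\infty(\Omega)}\leq C(T)$. Then $\io u_\eps^2\vert\nabla v_\eps\vert^2\leq C(T)\io u_\eps^2$. Since $\Vert u_\eps\Vert_{L^\infty(\Omega)}\leq C(T)$ by Lemma \ref{Linfty Schranke für u}, this term is simply bounded by a constant. Alternatively one could use $L^3\subset L^2$, or Lemma \ref{Raum-Zeit-Beschränktheit von ueps in L2} after time integration.

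Integrating over $(0,T)$ then yields
\begin{align*}
 \frac12\io u_\eps^2(\cdot,T)+\frac{C(T)}{2}\int_0^T\io\vert\nabla u_\eps\vert^2\leq\frac12\io u_0^2+C'(T)T,
\end{align*}
where $\io u_0^2$ is finite since $u_0\in W^{1,q}(\Omega)\subset L^\infty(\Omega)$. Dropping the nonnegative first term proves the claim.

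There is no real obstacle here. The only point requiring care is that every constant, namely the lower bound for $e^{-v_\eps}$, the $L^\infty$ bound for $\nabla v_\eps$ and the logistic contribution from Lemma \ref{Lemma Abschätzung Logistik}, depends only on $T$ and not on $\eps$. This is guaranteed by the earlier lemmas.
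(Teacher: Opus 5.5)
Your proposal is correct and follows essentially the same route as the paper. Both test the first equation with $u_\eps$ and use the $\eps$-independent lower bound $e^{-v_\eps}\geq C(T)$ coming from the time-dependent upper bound for $v_\eps$; both then control $\io u_\eps^2\vert\nabla v_\eps\vert^2$ through $\Vert\nabla v_\eps\Vert_{L^\infty(\Omega)}\leq C\Vert u_\eps\Vert_{L^3(\Omega)}$ and the uniform $L^3$ (or $L^\infty$) bound, and integrate over $(0,T)$. Your logistic constant $\frac{\kappa^3}{\mu^2}\vert\Omega\vert$ is in fact the correct one for $p=2$ (the paper writes $\frac{\kappa^2}{\mu}\vert\Omega\vert$), though this makes no difference to the argument.
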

\begin{proof}
 Testing the first equation of \eqref{Hauptsystem} by $u_{\eps}$ and integrating over $\Omega$ yields
 \begin{align*}
  \frac{1}{2}\frac{d}{dt}\io u_{\eps}^2&\leq-\io e^{-v_{\eps}}\vert\nabla u_{\eps}\vert^2
  +\io e^{-v_{\eps}}u_{\eps}\vert\nabla u_{\eps}\vert\cdot
  \vert\nabla v_{\eps}\vert+\frac{\kappa^2}{\mu}\vert\Omega\vert\\
  &\leq-\frac{C_1(T)}{2}\io \vert\nabla u_{\eps}\vert^2+\frac{1}{2}\io u_{\eps}^2\vert\nabla v_{\eps}\vert^2
  +\frac{\kappa^2}{\mu}\vert\Omega\vert\quad\text{in }(0,T)\:\text{for each }\eps\in(0,1),
 \end{align*}
 due to Young's inequality and a constant $C_1(T)>0$ obtained
 from Lemma \ref{Zeitabhängige obere Schranke für v}.
 Hölder's inequality, Lemma \ref{elliptische Regularitätstheorie}
 and Lemma \ref{Linfty Schranke für u} yield the existence of $C_2,C_3(T)>0$ such that
 \begin{align*}
  \frac{1}{2}\io u_{\eps}^2\vert\nabla v_{\eps}\vert^2
  \leq\frac{1}{2}\Vert\nabla v_{\eps}\Vert_{L^\infty(\Omega)}^2\io u_{\eps}^2
  \leq\frac{C_2}{2}\Vert u_{\eps}\Vert_{L^3(\Omega)}^2\Vert u_{\eps}\Vert_{L^2(\Omega)}^2\leq C_3(T)
  \quad\text{in }(0,T)\:\text{for each }\eps\in(0,1).
 \end{align*}
 Hence,
 \begin{align*}
  \frac{1}{2}\frac{d}{dt}\io u_{\eps}^2+\frac{C_1(T)}{2}\io\vert\nabla u_{\eps}\vert^2
  \leq C_3(T)+\frac{\kappa^2}{\mu}\vert\Omega\vert\quad\text{in }(0,T)\:\text{for each }\eps\in(0,1).
 \end{align*}
Integration over $(0,T)$ yields
\begin{align*}
 \io u_{\eps}^2(\cdot,T)+C_1(T)\int_0^T\io\vert\nabla u_{\eps}\vert^2
 \leq\io u_0^2+C_3(T)T+\frac{\kappa^2T}{\mu}\vert\Omega\vert=:C_4(T)\quad\text{for all }\eps\in(0,1).
\end{align*}
\end{proof}

\begin{lem}
Suppose \eqref{Generalvoraussetzungen} and
let $T\in(0,\infty)$. Then there is $C(T)>0$ such that for all $\eps\in(0,1)$ the corresponding
classical solution
$(u_\eps,v_\eps)$ satisfies
 \begin{align*}
  \int_0^T\io\vert\nabla(u_{\eps} e^{-v_{\eps}})\vert^2\leq C(T).
 \end{align*}
\end{lem}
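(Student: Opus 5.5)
We expand the gradient by the product rule,
\begin{align*}
 \nabla(u_\eps e^{-v_\eps})=e^{-v_\eps}\nabla u_\eps-e^{-v_\eps}u_\eps\nabla v_\eps,
\end{align*}
and then estimate both terms in $L^2(\Omega\times(0,T))$ separately, using $(a+b)^2\leq 2a^2+2b^2$. Since $v_\eps\geq 0$ by Lemma \ref{lokale Existenz}, we have $0<e^{-v_\eps}\leq 1$. This gives
\begin{align*}
 \int_0^T\io\vert\nabla(u_{\eps} e^{-v_{\eps}})\vert^2\leq 2\int_0^T\io\vert\nabla u_\eps\vert^2
 +2\int_0^T\io u_\eps^2\vert\nabla v_\eps\vert^2\quad\text{for all }\eps\in(0,1).
\end{align*}

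The first integral is bounded by $C(T)$ uniformly in $\eps\in(0,1)$ by Lemma \ref{nabla ueps in L2L2 beschränkt}.

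For the second integral we proceed as in the proof of Lemma \ref{nabla ueps in L2L2 beschränkt}. By Hölder's inequality, $\io u_\eps^2\vert\nabla v_\eps\vert^2\leq\Vert\nabla v_\eps\Vert_{L^\infty(\Omega)}^2\io u_\eps^2$. The embedding $W^{2,3}(\Omega)\hookrightarrow W^{1,\infty}(\Omega)$ together with Lemma \ref{elliptische Regularitätstheorie} gives $\Vert\nabla v_\eps\Vert_{L^\infty(\Omega)}\leq C\Vert u_\eps\Vert_{L^3(\Omega)}$, and Lemma \ref{L3 Schranke für u} bounds the right-hand side by a constant $C(T)$. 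The remaining factor $\io u_\eps^2$ is bounded pointwise in $t\in(0,T)$, for example by $\vert\Omega\vert\,\sup_{t\in(0,T)}\Vert u_\eps(\cdot,t)\Vert_{L^\infty(\Omega)}^2$ via Lemma \ref{Linfty Schranke für u}. Alternatively, one can integrate in time and use the space-time bound of Lemma \ref{Raum-Zeit-Beschränktheit von ueps in L2}. Either way, integrating over $(0,T)$ yields an $\eps$-independent bound $C(T)T$.

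No step here is a real obstacle, since all needed bounds are already available. The only care required is to check that every constant depends only on $T$ and not on $\eps$, which holds because each invoked lemma provides $\eps$-uniform constants.
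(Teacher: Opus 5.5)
Your proposal is correct and follows the paper's proof essentially step by step. You use the product rule, $e^{-v_\eps}\le 1$, and the $\eps$-uniform bound on $\int_0^T\io|\nabla u_\eps|^2$ for the first term; for the second you control $\Vert\nabla v_\eps\Vert_{L^\infty(\Omega)}$ by elliptic regularity with the $L^3$/$L^\infty$ bounds together with the space-time $L^2$ bound (your factor $2$ from $(a+b)^2\le 2a^2+2b^2$ is the correct one, whereas the paper's displayed $\frac12$ is a slip).
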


\begin{proof}
A direct computation yields
\begin{align*}
 &\int_0^T\io\vert\nabla u_{\eps} e^{-v_{\eps}}\vert^2
 =\int_0^T\io\vert e^{-v_{\eps}}\nabla u_{\eps}-e^{-v_{\eps}}u_{\eps}\nabla v_{\eps}\vert^2\\
 \leq&\frac{1}{2}\int_0^T\io e^{-2v_{\eps}}\vert\nabla u_{\eps}\vert^2
 +\frac{1}{2}\int_0^T\io e^{-2v_{\eps}}u_{\eps}^2\vert\nabla v_{\eps}\vert^2\\
 \leq&\frac{1}{2}\int_0^T\io\vert\nabla u_{\eps}\vert^2
 +\frac{1}{2}\int_0^T\Vert\nabla v_{\eps}(\cdot,t)\Vert_{L^\infty(\Omega)}^2
 \Vert u_{\eps}(\cdot,t)\Vert_{L^2(\Omega)}^2\quad\text{for all }\eps\in(0,1).
\end{align*}
By Lemma \ref{elliptische Regularitätstheorie} and Lemma \ref{Linfty Schranke für u} we find $C_1>0$ such that
\begin{align*}
 \int_0^T\Vert\nabla v_{\eps}(\cdot,t)\Vert_{L^\infty(\Omega)}^2\Vert \Vert u_{\eps}(\cdot,t)\Vert_{L^2(\Omega)}^2
 \leq C_1\int_0^T\io u_{\eps}^2\quad\text{for all }\eps\in(0,1).
\end{align*}
Thus, by Lemma \ref{nabla ueps in L2L2 beschränkt} and Lemma \ref{Raum-Zeit-Beschränktheit von ueps in L2}
we find $C_2(T)>0$ such that
\begin{align*}
 \int_0^T\io\vert\nabla(u_{\eps} e^{-v_{\eps}})\vert^2\leq C(T)\quad\text{for all }\eps\in(0,1).
\end{align*}
\end{proof}

Due to the Eberlein-Smulyan theorem there is $z\in L^2(\Omega\times(0,T))$ and a subsequence
such that $\nabla(e^{-v_{\eps_{j_k}}}u_{\eps_{j_k}})
\rightharpoonup z$ in $L^2(\Omega\times(0,T))$ as $k\to\infty$. It remains to show that $z=\nabla(e^{-v}u)$.

\begin{cor}\label{schwache Konvergenz von nablae-vu in L2}
Suppose \eqref{Generalvoraussetzungen}. Let $T\in(0,\infty)$ and $(\eps_j)_{j\in\N}\subseteq(0,1)$ be a
nullsequence. Then there is a subsequence $(\eps_{j_k})_{k\in\N}\subseteq(0,1)$ such that the corresponding
classical solutions $(u_{\eps_{j_k}},v_{\eps_{j_k}})$ satisfy
 \begin{align*}
  \nabla(u_{\eps_{j_k}}e^{-v_{\eps_{j_k}}})\rightharpoonup\nabla(ue^{-v})\quad\text{in }L^2(\Omega\times(0,T))
  \quad\text{as }k\to\infty,
 \end{align*}
 where $(u,v)$ is the limit pair from Lemma \ref{Gleichmäßige Konvergenz von ueps und veps}.
\end{cor}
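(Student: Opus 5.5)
We identify the weak limit $z$ by testing against smooth vector fields and integrating by parts, so that only the uniform convergence of $u_\eps e^{-v_\eps}$ is needed. By the preceding lemma, $\nabla(u_\eps e^{-v_\eps})$ is bounded in $L^2(\Omega\times(0,T))$ uniformly in $\eps\in(0,1)$. Starting from the subsequence of Lemma \ref{Gleichmäßige Konvergenz von ueps und veps}, Eberlein--\v{S}mulyan gives a further subsequence (not relabelled) and some $z\in L^2(\Omega\times(0,T);\R^2)$ with $\nabla(u_{\eps_{j_k}}e^{-v_{\eps_{j_k}}})\rightharpoonup z$.

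First, the products converge uniformly. Since $u_{\eps_{j_k}}\to u$ and $v_{\eps_{j_k}}\to v$ uniformly on $\overline{\Omega}\times[0,T]$, with $v_\eps\ge0$ so that $e^{-v_\eps}\le1$, the decomposition
\begin{align*}
u_\eps e^{-v_\eps}-ue^{-v}=(u_\eps-u)e^{-v_\eps}+u\,(e^{-v_\eps}-e^{-v})
\end{align*}
together with the Lipschitz continuity of $s\mapsto e^{-s}$ on $[0,\infty)$ and the boundedness of $u$ shows that $w_k:=u_{\eps_{j_k}}e^{-v_{\eps_{j_k}}}\to w:=ue^{-v}$ in $C^0(\overline{\Omega}\times[0,T])$. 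In particular $w_k\to w$ in $L^2(\Omega\times(0,T))$.

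Next, we identify $z$. For $\Phi\in C_c^\infty(\Omega\times(0,T);\R^2)$, integration by parts in $x$ (no boundary term, since $\Phi$ has compact support in $\Omega$) gives
\begin{align*}
\int_0^T\io\nabla w_k\cdot\Phi=-\int_0^T\io w_k\,\nabla\cdot\Phi.
\end{align*}
Letting $k\to\infty$, weak convergence on the left and strong convergence on the right yield
\begin{align*}
\int_0^T\io z\cdot\Phi=-\int_0^T\io w\,\nabla\cdot\Phi.
\end{align*}
Hence $z$ is the distributional spatial gradient of $w$. Consequently $w\in L^2((0,T);W^{1,2}(\Omega))$ and $z=\nabla(ue^{-v})$ almost everywhere. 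This argument also shows the Sobolev regularity of the limit, which is the only subtle point: one might worry that $u$ itself lacks weak derivatives. That is settled independently, since by Lemma \ref{nabla ueps in L2L2 beschränkt} and the same duality argument $u\in L^2((0,T);W^{1,2}(\Omega))$.

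Finally, the limit $z$ is independent of the subsequence chosen, so every subsequence has a further subsequence converging weakly to the same limit. The whole subsequence from Lemma \ref{Gleichmäßige Konvergenz von ueps und veps} therefore converges weakly, though for the statement it suffices to pass to a subsequence.

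I expect no serious obstacle. The only care needed is to use test fields compactly supported in the interior, so that no boundary terms appear, and to note that the distributional gradient in $L^2$ determines the weak limit uniquely.
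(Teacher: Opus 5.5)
Your proposal is correct and follows the paper's proof. You extract a weakly convergent subsequence from the $L^2$ bound on $\nabla(u_\eps e^{-v_\eps})$, integrate by parts against test functions compactly supported in $\Omega\times(0,T)$, and use the uniform convergence of $u_\eps e^{-v_\eps}$ to $ue^{-v}$ to identify the weak limit as $\nabla(ue^{-v})$; your explicit product decomposition and vector-valued test fields $\Phi$ simply state more carefully what the paper writes tersely.
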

\begin{proof}
 Testing with $\varphi\in C_c^\infty(\Omega\times(0,T))$ yields
 \begin{align*}
  \int_0^T\io z\varphi\leftarrow\int_0^T\io \nabla(e^{-v_{\eps_{j_k}}}u_{\eps_{j_k}})\varphi
  =-\int_0^T\io e^{-v_{\eps_{j_k}}}u_{\eps_{j_k}}\nabla\cdot\varphi
 \end{align*}
Since $u_{\eps_{j_k}}\to u$ in $C^0(\overline{\Omega}\times[0,T])$ and $v_{\eps_{j_k}}\to v$ in
$C^{2,0}(\overline{\Omega}\times[0,T])$, we conclude $e^{-v_{\eps_{j_k}}}u_{\eps_{j_k}}\rightharpoonup e^{-v}u$
in $L^2(\Omega\times(0,T))$. Hence,
\begin{align*}
 -\int_0^T\io e^{-v_{\eps_{j_k}}}u_{\eps_{j_k}}\nabla\cdot\varphi\to-\int_0^T\io e^{-v}u\nabla\cdot\varphi.
\end{align*}
Thus, $z=\nabla(e^{-v}u)$ almost everywhere in $\Omega\times(0,T)$.
\end{proof}
With this convergence result, we can show that the limit functions from Lemma \ref{Gleichmäßige Konvergenz von ueps und veps}
are $W^{1,2}$-solutions of \eqref{eps=0 System} in the sense of Definition \ref{Stark schwache Lösungsdefinition}.

\begin{cor}
Suppose \eqref{Generalvoraussetzungen} and let $T\in(0,\infty)$. Then
 the limit pair $(u,v)$ obtained in Lemma \ref{Gleichmäßige Konvergenz von ueps und veps}
 is a $W^{1,2}$-solutions of \eqref{eps=0 System}.
\end{cor}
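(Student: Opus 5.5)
We need to check each requirement of Definition \ref{Stark schwache Lösungsdefinition} with $\eps=0$ for the limit pair $(u,v)$ from Lemma \ref{Gleichmäßige Konvergenz von ueps und veps}. We pass to the limit $k\to\infty$ in the identities satisfied by the classical solutions $(u_{\eps_{j_k}},v_{\eps_{j_k}})$, always working along the subsequence from Corollary \ref{schwache Konvergenz von nablae-vu in L2}; this subsequence can be taken as a further subsequence of the one from Lemma \ref{Gleichmäßige Konvergenz von ueps und veps}.

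\textbf{Regularity, sign and the elliptic equation.} The continuity $u\in C^0(\overline{\Omega}\times[0,T])$ and $v\in C^{2,0}(\overline{\Omega}\times[0,T])$ comes directly from Lemma \ref{Gleichmäßige Konvergenz von ueps und veps}. Nonnegativity passes to the uniform limit from $u_\eps,v_\eps\geq0$ (Lemma \ref{lokale Existenz}). Lemmas \ref{Raum-Zeit-Beschränktheit von ueps in L2} and \ref{nabla ueps in L2L2 beschränkt} show that $(u_{\eps_{j_k}})$ is bounded in $L^2((0,T);W^{1,2}(\Omega))$. After a further subsequence it converges weakly there, and uniform convergence identifies the weak limit as $u$. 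Weak lower semicontinuity of the norm then gives $u\in L^2((0,T);W^{1,2}(\Omega))$. For the elliptic equation, $C^{2,0}$-convergence of $v_{\eps_{j_k}}$ and $C^0$-convergence of $u_{\eps_{j_k}}$ let us pass to the limit pointwise in $-\Delta v_{\eps}+v_\eps=u_\eps$. Since $\nabla v_{\eps}\to\nabla v$ uniformly up to the boundary, the Neumann condition $\partial_\nu v=0$ also survives.

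\textbf{The weak identity.} Fix $\varphi\in C_c^\infty(\overline{\Omega}\times[0,T))$. Multiplying the first equation of \eqref{Hauptsystem} by $\varphi$ and integrating by parts in time and space shows that every classical solution satisfies \eqref{schwache Formulierung für u} with $\eps=\eps_{j_k}$. The spatial integration by parts uses $\partial_\nu(e^{-v_\eps}u_\eps)=0$, which follows from the boundary conditions. The term $\int_0^T\io u_{\eps_{j_k}}\varphi_t$ converges to $\int_0^T\io u\varphi_t$ by uniform convergence. The diffusion term $\int_0^T\io\nabla(e^{-v_{\eps_{j_k}}}u_{\eps_{j_k}})\cdot\nabla\varphi$ converges to $\int_0^T\io\nabla(e^{-v}u)\cdot\nabla\varphi$ by Corollary \ref{schwache Konvergenz von nablae-vu in L2}, since $\nabla\varphi\in L^2(\Omega\times(0,T))$.

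\textbf{The logistic terms and the main obstacle.} The one point needing care is that these terms must vanish in the limit, and the available bounds are only local in time. They are in fact sufficient, because $\varphi$ has compact support in $\overline{\Omega}\times[0,T)$ and $T$ is finite. Lemma \ref{Linfty Schranke für u} gives an $\eps$-independent $C(T)$ with $u_\eps\leq C(T)$ on $\Omega\times(0,T)$. Hence
\begin{align*}
 \left|\eps\kappa\int_0^T\io u_\eps\varphi-\eps\mu\int_0^T\io u_\eps^2\varphi\right|
 \leq\eps\,(\kappa C(T)+\mu C(T)^2)\,T\vert\Omega\vert\,\Vert\varphi\Vert_{L^\infty}\to0
\end{align*}
as $\eps=\eps_{j_k}\to0$. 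Combining the three limits yields \eqref{schwache Formulierung für u} with $\eps=0$. Hence $(u,v)$ is a $W^{1,2}$-solution of \eqref{eps=0 System} on $(0,T)$.
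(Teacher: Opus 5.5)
Your proposal is correct and follows essentially the same route as the paper. Both pass to the limit pointwise in the elliptic equation and in the Neumann condition via $C^{2,0}$ and $C^0$ convergence, use uniform convergence for $\int_0^T\io u\varphi_t$ and the $\eps$-independent $L^\infty$ bound to make the logistic terms vanish, and use the weak $L^2$ convergence of $\nabla(u_\eps e^{-v_\eps})$ for the diffusion term. Your explicit checks of $u\in L^2((0,T);W^{1,2}(\Omega))$, of nonnegativity and of the subsequence bookkeeping are points the paper leaves implicit.
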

\begin{proof}
 Since $v_{\eps_{j_k}}\to v$ in $C^{2,0}(\overline{\Omega}\times[0,T])$, $u_{\eps_{j_k}}\to u$ in
 $C^0(\overline{\Omega}\times[0,T])$ as $j\to\infty$ and $(u_{\eps_{j_k}},v_{\eps_{j_k}})$ satisfies
 \begin{align*}
  0=\Delta v_{\eps_{j_k}}-v_{\eps_{j_k}}+u_{\eps_{j_k}}\text{ in }\Omega,\quad
  \partial_\nu v_{\eps_{j_k}}=0\text{ on }\partial\Omega,
 \end{align*}
it follows
\begin{align*}
 0=\Delta v_{\eps_{j_k}}-v_{\eps_{j_k}}+u_{\eps_{j_k}}
 \to\Delta v-v+u=0\quad\text{pointwise in }\Omega\times[0,T],
\end{align*}
and $\partial_\nu v=0$ on $\partial\Omega$.
Due to uniform convergence and boundedness of $u_{\eps_{j_k}}$, we have
\begin{align*}
 -\int_0^T\io u_{\eps_{j_k}}\varphi_t\to-\int_0^T\io u\varphi_t,\quad
 \eps_{j_k}\kappa\int_0^T\io u_{\eps_{j_k}}\to 0,\quad
 -\eps_{j_k}\mu\int_0^T\io u_{\eps_{j_k}}^2\to 0,
\end{align*}
as $j\to\infty$. In combination with Corollary \ref{schwache Konvergenz von nablae-vu in L2}, we obtain that $(u,v)$ is a
$W^{1,2}$-solution of \eqref{eps=0 System}.
\end{proof}

\begin{cor}\label{u,v sind klassische Lösungen}
Suppose \eqref{Generalvoraussetzungen} and let $T\in(0,\infty)$. Then
 the limit pair $(u,v)$ obtained in Lemma \ref{Gleichmäßige Konvergenz von ueps und veps}
 is a classical solution of \eqref{eps=0 System}.
\end{cor}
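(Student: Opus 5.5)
The plan is to bootstrap the regularity of the $W^{1,2}$-solution $(u,v)$ by treating the first equation of \eqref{eps=0 System} as a \emph{linear} parabolic equation with known coefficients. We already know from Corollary \ref{Grenzfunktionen sind Hölderstetig} that $v\in C^{2+\alpha,\frac{\alpha}{2}}(\overline{\Omega}\times[0,T])$. Hence the functions
\begin{align*}
 a:=e^{-v},\qquad b:=-e^{-v}\nabla v,
\end{align*}
are Hölder continuous on $\overline{\Omega}\times[0,T]$ (spatially even $C^{1+\alpha}$), and $a$ is bounded below by a positive constant because $v\le C(T)$ by Lemma \ref{Zeitabhängige obere Schranke für v}, which passes to the limit. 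Then $u$ is a weak solution of the divergence form problem
\begin{align*}
 u_t=\nabla\cdot(a\nabla u+bu)\ \text{in }\Omega\times(0,T),\qquad (a\nabla u+bu)\cdot\nu=0\ \text{on }\partial\Omega\times(0,T),\qquad u(\cdot,0)=u_0 .
\end{align*}
Here the boundary condition is encoded in \eqref{schwache Formulierung für u} by testing with $\varphi$ not vanishing on $\partial\Omega$; note that $b\cdot\nu=-e^{-v}\partial_\nu v=0$, so it reduces to $\partial_\nu u=0$.

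The key steps are as follows.

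First, I will consider the unique classical solution $\tilde u$ of this linear Neumann problem. On any interval $[\tau,T]$ with $\tau>0$, I would invoke the Schauder theory for linear parabolic equations (\cite[Theorem IV.5.3]{LSU}, writing the equation in nondivergence form $u_t=a\Delta u+(\nabla a+b)\cdot\nabla u+(\nabla\cdot b)u$). This requires the coefficients to be Hölder continuous in $(x,t)$. Here $\nabla a$ and $\nabla\cdot b$ involve $D^2v$, which is $C^{\alpha,\frac{\alpha}{2}}$ by Corollary \ref{Grenzfunktionen sind Hölderstetig}, so this should be fine. Near $t=0$ I would only have $u_0\in W^{1,q}$, so I would obtain $\tilde u\in C^0(\overline{\Omega}\times[0,T])\cap C^{2+\alpha,1+\frac{\alpha}{2}}(\overline{\Omega}\times[\tau,T])$ for every $\tau>0$. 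For this I would work with a smooth temporal cutoff, or first solve with continuous initial data via \cite[Theorem V.1.1]{LSU}, and then apply interior-in-time Schauder estimates.

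Second, I will show $u=\tilde u$. Both are weak solutions in $C^0\cap L^2((0,T);W^{1,2})$ of the same linear problem. By the Remark after Definition \ref{Stark schwache Lösungsdefinition}, I can test the difference $w=u-\tilde u$ with $w$ itself, after a Steklov average in time to justify this, since we lack $w_t\in L^2$. This gives
\begin{align*}
 \frac12\frac{d}{dt}\io w^2\le-\io a|\nabla w|^2+\|b\|_\infty\io|w||\nabla w|\le C\io w^2 ,
\end{align*}
and Gronwall with $w(\cdot,0)=0$ yields $w\equiv0$.

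Third, having $u\in C^{2,1}(\overline{\Omega}\times(0,T])$, I would use the elliptic equation with the Hölder continuous right-hand side $u$ together with Lemma \ref{elliptische Regularitätstheorie}. This shows that $v$ is $C^{2+\alpha}$ in space. Differentiability of $v$ in time follows because $v_t=(I-\Delta)^{-1}u_t$ and $u_t$ is Hölder continuous, which places $v$ in $C^{2,1}$. Nonnegativity and the boundary conditions are inherited from the limit.

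The main obstacle is the second step: justifying the energy identity for a solution that is merely $C^0\cap L^2(W^{1,2})$ with no a priori control of $u_t$. I would handle it by Steklov averaging $w_h(t)=\frac1h\int_t^{t+h}w$ in the weak formulation and letting $h\to0$. An alternative is a duality argument, testing with the solution of the backward adjoint problem, which is smooth by the same Schauder theory. I expect the duality route to be the cleaner fallback.
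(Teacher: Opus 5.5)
Your argument is correct, but it takes a different route from the paper. The paper's proof does not use the $W^{1,2}$-formulation at this point. It cites (via the proof of \cite[Theorem 2.5]{winkler2011}) $\eps$-independent bounds for $u_{\eps_{j_k}}$ in $C^{2+\theta,1+\frac{\theta}{2}}_{loc}(\overline{\Omega}\times(0,\infty))$, obtained by applying Schauder theory to the approximate problems on top of the uniform $L^\infty$- and H\"older bounds; the logistic term is harmless there. It then extracts a subsequence converging in $C^{2,1}_{loc}(\overline{\Omega}\times(0,\infty))$ to some $\tilde u$, passes to the limit pointwise in the equation, and identifies $\tilde u=u$ by uniform convergence.

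You instead work only with the limit. You freeze $v$ and treat the first equation as a linear Neumann problem with H\"older coefficients. Only $\nabla v$ and $\Delta v=v-u$ enter these coefficients, so their joint H\"older continuity is immediate. You then build a Schauder solution, most cleanly via the temporal cutoff with zero initial data, and identify it with $u$ by uniqueness of energy solutions. That uniqueness step is standard: the weak formulation gives $w_t\in L^2((0,T);(W^{1,2}(\Omega))^\ast)$, so Lions--Magenes (or Steklov averaging) justifies the energy identity.

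What each route buys:
\begin{itemize}
\item The paper's route gives the stronger conclusion that the approximations themselves converge in $C^{2,1}_{loc}$. It also makes the weak-solution machinery preceding the corollary superfluous. The price is an additional higher-order $\eps$-uniform estimate that is only cited, not proved.
\item Your route uses exactly the $W^{1,2}$-property established just before the corollary and needs only linear theory for the limit problem.
\item Your third step also supplies the time differentiability of $v$, via $v_t=(I-\Delta)^{-1}u_t$. The paper's proof leaves this implicit, even though $v\in C^{2,1}$ is part of what ``classical solution'' requires.
\end{itemize}
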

\begin{proof}
 Since $v$ solves the elliptic equation in \eqref{eps=0 System} classically, it has to be the unique classical solution.
 It remains to show that $u$ solves the parabolic equation in the classical sense. Due to Lemma \ref{Linfty Schranke für u}, there
 is $C_1(T)>0$ such that $\Vert u_{\eps_{j_k}}(\cdot,t)\Vert_{L^\infty(\Omega)}\leq C_1(T)$
 for all $t\in(0,T)$ and $k\in\N$.
 By \cite[Proof of Theorem 2.5]{winkler2011} we find $C_2>0$ such that
 \begin{align*}
  \Vert u_{\eps_{j_k}}\Vert_{C_{loc}^{2+\theta,1+\frac{\theta}{2}}(\overline{\Omega}\times(0,\infty))}\leq C_2
 \end{align*}
for all $k\in\N$. Then by \cite[Proof of Theorem 1.1]{FujieYokota2014} there is
$\tilde{u}\in C_{loc}^{2,1}(\overline{\Omega}\times(0,\infty))$ such that
\begin{align*}
 u_{\eps_{j_k}}\to\tilde{u}\quad\text{in }C_{loc}^{2,1}(\overline{\Omega}\times(0,\infty))
 \quad\text{as }k\to\infty,
\end{align*}
and $(\tilde{u},v)$ solves \eqref{eps=0 System} classically.
For a compact subset $[T_1,T_2]\subseteq(0,\infty)$, we have
\begin{align*}
 u_{\eps_{j_k}}\to u\quad\text{in }C^0(\overline{\Omega}\times[T_1,T_2])\quad\text{as }k\to\infty,
\end{align*}
 by Lemma \ref{Gleichmäßige Konvergenz von ueps und veps} and
\begin{align*}
 u_{\eps_{j_k}}\to\tilde{u}\quad\text{in }C^0(\overline{\Omega}\times[T_1,T_2])\quad\text{as }k\to\infty.
\end{align*}
Hence $\tilde{u}(x,t)=u(x,t)$ in $\Omega\times(0,T)$ for $T\in(0,\infty)$. Hence, the limit pair $(u,v)$ from
Lemma \ref{Gleichmäßige Konvergenz von ueps und veps} is the unique classical solution of \eqref{eps=0 System}.
\end{proof}
Due to that result, Theorem \ref{Theorem Konvergenzen} is directly proven.

\begin{proof}[Proof of Theorem \ref{Theorem Konvergenzen}]
 The uniform convergence follows from Lemma \ref{Gleichmäßige Konvergenz von ueps und veps}. That the obtained
 limit pair $(u,v)$ is the unique classical solution of \eqref{eps=0 System} follows from
Corollary \ref{u,v sind klassische Lösungen}.
\end{proof}

\section{Proof of Theorem 1.2}
Since we have uniform convergence of $u_\eps,v_\eps$ and the limit functions $u,v$ solve \eqref{eps=0 System}, we can
prove Theorem \ref{Theorem große Dichten}.
\begin{proof}[Proof of Theorem \ref{Theorem große Dichten}]
Let $u_0\in W^{1,q}(\Omega)$ for $q>2$ be as in the formulation of Theorem \ref{Theorem große Dichten}.
We then know by \cite[Theorem 1.3]{FUJIE20205338} that
\begin{align}\label{Blow-Up von u in unendlicher Zeit}
 \lim_{t\nearrow\infty}\Vert u(\cdot,t)\Vert_{L^\infty(\Omega)}=\infty,
\end{align}
where $u$ is the unique solution of the parabolic equation in \eqref{eps=0 System}.
Let $M>0$. Then there is $T(M)>0$ such that
\begin{align}
 \Vert u(\cdot,t)\Vert_{L^\infty(\Omega)}>2M
\end{align}
for all $t\geq T(M)$. Hence, there exists $x_0\in\Omega$ such that
\begin{align*}
 u(x_0,T(M))>2M.
\end{align*}
Due to continuity there is $t_0\in(0,T(M))$ such that
\begin{align*}
 u(x_0,t_0)>M.
\end{align*}
Let us assume that there is a nullsequence $(\eps_j)_{j\in\N}\subseteq(0,1)$
such that
\begin{align*}
 u_{\eps_j}(x,t)\leq M
\end{align*}
for all $(x,t)\in\Omega\times(0,T(M))$ and $j\in\N$, where $u_{\eps_j}$
is the unique classical solution of \eqref{Hauptsystem} for the corresponding $\eps_j$.
By Lemma \ref{Gleichmäßige Konvergenz von ueps und veps} there is a subsequence
$(\eps_{j_k})_{k\in\N}\subseteq(0,1)$ such that
the corresponding classical solutions $(u_{\eps_{j_k}},v_{\eps_{j_k}})$ satisfy
\begin{align*}
 &u_{\eps_{j_k}}\to\tilde{u}\quad\text{in }C^0(\overline{\Omega}\times[0,T(M)])\quad\text{as }k\to\infty,\\
 &v_{\eps_{j_k}}\to\tilde{v}\quad\text{in }C^{2,0}(\overline{\Omega}\times[0,T(M)])\quad\text{as }k\to\infty,
\end{align*}
where $(\tilde{u},\tilde{v})$ is a $W^{1,2}$-solution of \eqref{eps=0 System}. Moreover,
$\tilde{u}\in C^{\alpha,\frac{\alpha}{2}}(\overline{\Omega}\times[0,T(M)])$
for $\alpha<\lambda$ by Corollary \ref{Grenzfunktionen sind Hölderstetig}.
By Corollary \ref{u,v sind klassische Lösungen} and due to uniqueness of classical solutions of
\eqref{eps=0 System}, we conclude $\tilde{u}(x,t)=u(x,t)$ and $\tilde{v}(x,t)=v(x,t)$
for all $(x,t)\in\overline{\Omega}\times[0,T(M)]$.
Due to uniform convergence it follows for all $(x,t)\in\Omega\times(0,T(M))$
\begin{align*}
 u(x,t)=\tilde{u}(x,t)=\lim_{k\to\infty} u_{\eps_{j_k}}(x,t)\leq M.
\end{align*}
This implies
\begin{align*}
 M<u(x_0,t_0)=\tilde{u}(x_0,t_0)=\lim_{k\to\infty}u_{\eps_{j_k}}(x_0,t_0)\leq M.
\end{align*}
\end{proof}

\section{Acknowledgment}
This research was funded by the Deutsche Forschungsgemeinschaft (DFG, German Research Foundation)
– project no. 552796953 (``Explosions and solvability in cross-diffusive systems of chemotaxis type''),
whose support is gratefully acknowledged.

\phantomsection
\addcontentsline{toc}{section}{References}
\bibliographystyle{abbrv}
 \bibliography{lit.bib}

\end{document}